\renewcommand*\subjclass[2][2000]{%
  \def\@subjclass{#2}%
  \@ifundefined{subjclassname@#1}{%
    \ClassWarning{\@classname}{Unknown edition (#1) of Mathematics
      Subject Classification; using '1991'.}%
  }{%
    \@xp\let\@xp\subjclassname\csname subjclassname@#1\endcsname
  }%
}
\newtheorem*{ThmA}{Theorem A}
\newtheorem*{ThmB}{Theorem B}
\newtheorem*{ThmC}{Theorem C}
\newtheorem*{ThmD}{Theorem D}
\newtheorem*{ThmE}{Theorem E}
\newtheorem*{LemF}{Lemma F}
\newtheorem*{LemG}{Lemma G}
\newtheorem*{CorA}{Corollary A}
\newtheorem{Thm}{Theorem}[section]
\newtheorem{Cor}[Thm]{Corollary}
\newtheorem{Lem}[Thm]{Lemma}
\theoremstyle{definition}
\theoremstyle{remark}
\newtheorem{Rem}[Thm]{\upshape\bfseries Remark}
\newtheorem{Prob}[Thm]{Problem}
\numberwithin{equation}{section}
\newcommand{\ee}{\mathrm{e}}
\theoremstyle{definition}
\def\be{\begin{equation}}
\def\ee{\end{equation}}
\newcommand{\ben}{\begin{enumerate}}
\newcommand{\een}{\end{enumerate}}
\newcommand{\br}{\begin{rem}}
\newcommand{\er}{\end{rem}}
\newcommand{\brs}{\begin{rems}}
\newcommand{\ers}{\end{rems}}
\newcommand{\bo}{\begin{obser}}
\newcommand{\eo}{\end{obser}}
\newcommand{\bos}{\begin{obsers}}
\newcommand{\eos}{\end{obsers}}
\newcommand{\bpf}{\begin{pf}}
\newcommand{\epf}{\end{pf}}
\newcommand{\ba}{\begin{array}}
\newcommand{\ea}{\end{array}}
\newcommand{\beq}{\begin{eqnarray}}
\newcommand{\beqq}{\begin{eqnarray*}}
\newcommand{\eeq}{\end{eqnarray}}
\newcommand{\eeqq}{\end{eqnarray*}}
\numberwithin{equation}{section}
\newcounter{minutes}\setcounter{minutes}{\time}
\newcounter{hours}\setcounter{hours}{\time}
\begin{document}
\title{Equivalent norms and Hardy-Littlewood type Theorems, and their applications}

\author[Shaolin Chen and Hidetaka Hamada]{Shaolin Chen and Hidetaka Hamada}

\address{S. L.  Chen, College of Mathematics and
Statistics, Hengyang Normal University, Hengyang, Hunan 421002,
People's Republic of China; Hunan Provincial Key Laboratory of Intelligent Information Processing and Application,  421002,
People's Republic of China.} \email{mathechen@126.com}

\address{H. Hamada, Faculty of Science and Engineering, Kyushu Sangyo University,
3-1 Matsukadai 2-Chome, Higashi-ku, Fukuoka 813-8503, Japan.}
\email{ h.hamada@ip.kyusan-u.ac.jp}


\maketitle

\def\thefootnote{}
\footnotetext{2010 Mathematics Subject Classification. Primary: 31A05, 30H05, 47B33; Secondary: 30C62, 46E15.}
\footnotetext{Keywords.
Lipschitz  space, Equivalent norms,
Harmonic function,
Composition operator}
\makeatletter\def\thefootnote{\@arabic\c@footnote}\makeatother

\begin{abstract}
The main purpose of this paper is to develop some methods to investigate equivalent norms and Hardy-Littlewood type Theorems on Lipschitz type spaces of analytic functions and
complex-valued
harmonic functions. Initially, some characterizations of equivalent norms on Lipschitz type spaces  of analytic functions and complex-valued
harmonic functions will be given. In particular, we give an answer to an open problem posed by Dyakonov in ({\it  Math. Z.} 249(2005), 597--611). Furthermore, some Hardy-Littlewood type Theorems of complex-valued
harmonic functions are established. The obtained results  improve and extend the main results in ({\it Acta Math.} 178(1997), 143--167).
 Additionally, we apply the equivalent norms and Hardy-Littlewood type Theorems to study composition operators between Lipschitz type spaces. 
\end{abstract}

\maketitle
\tableofcontents

\section{Introduction}\label{sec1}
Let $\mathbb{C}$ be the complex plane. For $a\in\mathbb{C}$ and $\rho>0$, let $\mathbb{D}(a,\rho)=\{z:~|z-a|<\rho\}$. In particular, we use $\mathbb{D}_{\rho}$ to
denote the disk $\mathbb{D}(0,\rho)$ and $\mathbb{D}$ to denote the unit disk $\mathbb{D}_{1}$. Moreover, let $\mathbb{T}:=\partial\mathbb{D}$ be the
unit circle.
A twice differentiable complex-valued function $f=u+iv$  is harmonic defined in a domain $\Omega\subset\mathbb{C}$ if the real-valued
 functions $u$ and $v$ satisfy Laplace's equations $\Delta u=\Delta v=0$, where
$$\Delta:=\frac{\partial^{2}}{\partial x^{2}}+\frac{\partial^{2}}{\partial y^{2}}.$$ For some more details of complex-valued harmonic functions, see \cite{Du}.
 It is well-known that
every complex-valued harmonic function $f$ defined in a simply
connected domain $\Omega$ admits a decomposition $f = h
+ \overline{g}$, where $h$ and $g$ are analytic in $\Omega$. This decomposition is unique up to an additive constant.
Let us recall that the Poisson integral, $P[\varphi]$, of a function $\varphi\in L^{1}(\mathbb{T})$ is defined by
$$P[\varphi](z)=\frac{1}{2\pi}\int_{0}^{2\pi}\varphi(e^{i\tau})\mathbf{P}(z,e^{i\tau})d\tau$$ or
$$P[\varphi](z)=\frac{1}{2\pi}\int_{-\pi}^{\pi}\varphi(e^{i\tau})\mathbf{P}(z,e^{i\tau})d\tau,$$
where $\mathbf{P}(z,e^{i\tau})=\frac{1-|z|^{2}}{|e^{i\tau}-z|^{2}}$
is the Poisson kernel. In the following, we use $\mathscr{H}(\mathbb{D})$ to denote the set of all complex-valued harmonic functions of $\mathbb{D}$ into $\mathbb{C}$.

For $z=x+iy\in\mathbb{C}$, the complex formal differential operators
are defined by $\partial/\partial z=1/2(\partial/\partial x-i\partial/\partial y)$ and $\partial/\partial \overline{z}=1/2(\partial/\partial x+i\partial/\partial y)$, where $x,y\in\mathbb{R}$.
For a differentiable function $f$ on a domain $\Omega\subset \mathbb{C}$,
let
$$\mathscr{M}_{f}(z):=|f_{z}(z)|+|f_{\overline{z}}(z)|,
\quad z\in \Omega.
$$
For
$\theta\in[0,2\pi]$, the  directional derivative of  a
complex-valued harmonic function $f$ at $z\in\mathbb{D}$ is defined
by
$$\partial_{\theta}f(z)=\lim_{\rho\rightarrow0^{+}}\frac{f(z+\rho e^{i\theta})-f(z)}{\rho}=f_{z}(z)e^{i\theta}
+f_{\overline{z}}(z)e^{-i\theta},$$ where $f_{z}:=\partial
f/\partial z,$ $f_{\overline{z}}:=\partial f/\partial \overline{z}$
and $\rho$ is a positive real number such that $z+\rho
e^{i\theta}\in\mathbb{D}$. Then
$$\mathscr{M}_{f}(z)=\max\{|\partial_{\theta}f(z)|:\; \theta\in[0,2\pi]\}.
$$

A mapping $f:~\Omega\rightarrow\mathbb{C}$ is said to be absolutely
continuous on lines, $ACL$ in brief, in the domain $\Omega$ if for
every closed rectangle $R\subset\Omega$ with sides parallel to the
axes $x$ and $y$, $f$ is absolutely continuous on almost every
horizontal line and almost every vertical line in $R$. Such a
mapping has, of course,  partial derivatives $f_{x}$ and $f_{y}$
a.e. in $\Omega$. Moreover, we say $f\in ACL^{2}$ if $f\in ACL$ and
its partial derivatives are locally $L^{2}$ integrable in $\Omega$.
A sense-preserving and continuous mapping $f$ of  $\mathbb{D}$ into
$\mathbb{C}$ is called a  $K$-quasiregular mapping if

\begin{enumerate}
\item  $f$ is $ACL^{2}$ in  $\mathbb{D}$ and $J_{f}>0$ a.e. in   $\mathbb{D}$, where $J_{f}=|f_{z}|^{2}-|f_{\overline{z}}|^{2}$ is the Jacobian of $f$;

\item there is a constant $K\geq1$
 such that
$$\mathscr{M}_{f}^{2}\leq KJ_{f}~\mbox{ a.e. in   $\mathbb{D}$}.$$
\end{enumerate}

Throughout of this paper,
 we use the symbol $M$ to denote the various positive
constants, whose value may change from one occurrence to another.

\section{Preliminaries and main results}\label{sec2-1}

A continuous increasing function $\omega:[0,\infty)\rightarrow[0,\infty)$ with $\omega(0)=0$ is called a majorant if
$\omega(t)/t$ is non-increasing for $t\in(0,\infty)$ (see \cite{Dy1,Dy2,P}). For $\delta_{0}>0$ and $0<\delta<\delta_{0}$, we consider the
following conditions on a majorant $\omega$:
\be\label{eq2x}
\int_{0}^{\delta}\frac{\omega(t)}{t}\,dt\leq\, M\omega(\delta)
\ee
and
\be\label{eq3x}
\delta\int_{\delta}^{\infty}\frac{\omega(t)}{t^{2}}\,dt\leq\, M
\omega(\delta),
\ee
where $M$ denotes a positive constant. A majorant $\omega$ is henceforth  called fast (resp. slow) if condition (\ref{eq2x}) (resp. (\ref{eq3x}) ) is fulfilled.
In particular, a majorant $\omega$ is said to be  regular if it satisfies the
conditions (\ref{eq2x}) and (\ref{eq3x}) (see \cite{Dy1,Dy2}).

Given a majorant $\omega$ and a subset  $\Omega$ of
$\mathbb{C}$, a function $f$ of  $\Omega$ into $\mathbb{C}$ is
said to belong to the  Lipschitz  space
$\Lambda_{\omega}(\Omega)$
 if there is a positive constant $M$ such that
 \be\label{rrt-1}|f(z_{1})-f(z_{2})| \leq\,M\omega\left(|z_{1}-z_{2}|\right), \quad
z_{1},z_{2} \in \Omega.\ee
Furthermore, let
$$\|f\|_{\Lambda_{\omega}(\Omega),s}:=\sup_{z_{1},z_{2}\in\Omega,z_{1}\neq\,z_{2}}\frac{|f(z_{1})-f(z_{2})|}{\omega(|z_{1}-z_{2}|)}<\infty.$$
Note that if $\Omega$ is a proper subdomain of $\mathbb{C}$ and
$f\in \Lambda_{\omega}(\Omega)$, then $f$ is continuous on
$\overline{\Omega}$ and (\ref{rrt-1}) holds for $z,w \in
\overline{\Omega}$ (see \cite{Dy2}).
Furthermore,  we use  $\Lambda_{\omega,p}(\mathbb{D})$ to denote the class of all Borel functions
$f$ of $\mathbb{D}$ into $\mathbb{C}$ such that, for $z_{1},~z_{2}\in\mathbb{D}$,
$$\mathcal{L}_{p}[f](z_{1},z_{2})\leq M\omega(|z_{1}-z_{2}|),$$
where $M$ is a positive constant and
$$\mathcal{L}_{p}[f](z_{1},z_{2})=
\begin{cases}
\displaystyle\left(\int_{0}^{2\pi}|f(e^{i\eta}z_{1})-f(e^{i\eta}z_{2})|^{p}d\eta\right)^{\frac{1}{p}}
& \mbox{if } p\in(0,\infty),\\
\displaystyle|f(z_{1})-f(z_{2})| &\mbox{if } p=\infty.
\end{cases}
$$
The Lipschitz constant of $f\in\Lambda_{\omega,p}(\mathbb{D})$ is defined as follows $$\|f\|_{\Lambda_{\omega,p}(\Omega),s}:=\sup_{z_{1},z_{2}\in\Omega,z_{1}\neq\,z_{2}}\frac{\mathcal{L}_{p}[f](z_{1},z_{2})}{\omega(|z_{1}-z_{2}|)}<\infty.$$
 Obviously, $\|f\|_{\Lambda_{\omega,\infty}(\mathbb{D}),s}=\|f\|_{\Lambda_{\omega}(\mathbb{D}),s}$ and $\Lambda_{\omega,\infty}(\mathbb{D})=\Lambda_{\omega}(\mathbb{D})$.
 Moreover,  we define the space $\Lambda_{\omega,p}(\mathbb{T})$ consisting of those $f\in L^{p}(\mathbb{T})$
for which
$$\mathcal{L}_{p}[f](\xi_{1},\xi_{2})\leq M\omega(|\xi_{1}-\xi_{2}|),~\xi_{1},~\xi_{2}\in\mathbb{T},$$
where $M>0$ is a constant and
$$\mathcal{L}_{p}[f](\xi_{1},\xi_{2})=
\begin{cases}
\displaystyle\left(\int_{0}^{2\pi}|f(e^{i\eta}\xi_{1})-f(e^{i\eta}\xi_{2})|^{p}d\eta\right)^{\frac{1}{p}}
& \mbox{if } p\in(0,\infty),\\
\displaystyle|f(\xi_{1})-f(\xi_{2})| &\mbox{if } p=\infty.
\end{cases}
$$
  In particular, we say that
a function $f$ belongs to the  local Lipschitz space
$\mbox{loc}\Lambda_{\omega}(\Omega)$ if (\ref{rrt-1})
holds, with a fixed
positive constant $M$, whenever $z\in \Omega$ and
$|z-w|<\frac{1}{2}d_{\Omega}(z)$ (cf. \cite{Dy2,GM,L}), where $d_{\Omega}(z)$ is the
Euclidean distance between $z$ and the boundary of $\Omega$. Moreover, $\Omega$ is called a  $\Lambda_{\omega}$-extension domain if
$\Lambda_{\omega}(\Omega)=\mbox{loc}\Lambda_{\omega}(\Omega).$
On the geometric
characterization of $\Lambda_{\omega}$-extension domains, see
 \cite{GM}. In \cite{L}, Lappalainen
generalized the characterization of \cite{GM}, and proved that $\Omega$ is a
$\Lambda_{\omega}$-extension domain if and only if each pair of points
$z_{1},z_{2}\in \Omega$ can be joined by a rectifiable curve $\gamma\subset \Omega$
satisfying
\[
\int_{\gamma}\frac{\omega(d_{\Omega}(\zeta))}{d_{\Omega}(\zeta)}\,ds(\zeta)
\leq M\omega(|z_{1}-z_{2}|)
\]
with some fixed positive constant
$M$, where $ds$ stands for the arc length measure on
$\gamma$.  Furthermore, Lappalainen \cite[Theorem 4.12]{L} proved
that $\Lambda_{\omega}$-extension domains  exist only for fast majorants
$\omega$. In particular, $\mathbb{D}$ is a $\Lambda_{\omega}$-extension domain of $\mathbb{C}$ for fast majorant $\omega$ (see \cite{Dy2}).

\subsection{Equivalent norms  and Hardy-Littlewood type theorems on Lipschitz type spaces}

 Let $\mathscr{A}(\mathbb{D})$ denote the disk algebra, i.e., the class of analytic functions in $\mathbb{D}$ that are continuous up to the boundary.
In \cite{Dy1}, Dyakonov  gave some characterizations of  functions
 $f\in\mathscr{A}(\mathbb{D})\cap\Lambda_{\omega,\infty}(\mathbb{D})$ in terms of their equivalent norms (or their moduli). Let's recall the main results of \cite{Dy1}
as follows. 

\begin{ThmA}\label{Dya-1}{\rm(see \cite[Theorem 2]{Dy1};
cf. \cite[p. 598]{Dy3} \mbox{or} \cite[Theorem A]{P})}
Let $\omega$ be a fast majorant.
Then
$f\in\mathscr{A}(\mathbb{D})\cap\Lambda_{\omega,\infty}(\overline{\mathbb{D}})$ if and only if $|f|\in\mathscr{A}(\mathbb{D})\cap\Lambda_{\omega,\infty}(\overline{\mathbb{D}})$.
\end{ThmA}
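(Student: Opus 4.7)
The forward implication is immediate: the reverse triangle inequality $\bigl|\,|f(z_1)|-|f(z_2)|\,\bigr|\leq |f(z_1)-f(z_2)|$ together with $f\in\mathscr{A}(\mathbb{D})\cap\Lambda_{\omega,\infty}(\overline{\mathbb{D}})$ yields $|f|\in\mathscr{A}(\mathbb{D})\cap\Lambda_{\omega,\infty}(\overline{\mathbb{D}})$, since $|f|$ is automatically continuous on $\overline{\mathbb{D}}$.

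The core of the theorem is the converse. The plan is to first establish the pointwise derivative estimate
\[
|f'(w)|(1-|w|)\leq C\omega(1-|w|),\qquad w\in\mathbb{D}, \qquad (\star)
\]
and then pass to the global Lipschitz bound by path integration. Once $(\star)$ is known, since $\omega$ is a fast majorant, Lappalainen's characterization (recalled in the excerpt) ensures that $\mathbb{D}$ is a $\Lambda_{\omega}$-extension domain, so any two points $z_1,z_2\in\overline{\mathbb{D}}$ can be joined by a rectifiable curve $\gamma\subset\mathbb{D}$ with $\int_{\gamma}\omega(d_{\mathbb{D}}(\zeta))/d_{\mathbb{D}}(\zeta)\,ds(\zeta)\leq M\omega(|z_1-z_2|)$. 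Integrating $|f'|$ along $\gamma$ using $(\star)$ then produces $|f(z_1)-f(z_2)|\leq C'\omega(|z_1-z_2|)$.

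To prove $(\star)$ at a fixed $w\in\mathbb{D}$, set $\rho=(1-|w|)/2$ and split according to the size of $|f(w)|$ relative to $\omega(\rho)$. In Case I ($|f(w)|\leq 2M\omega(\rho)$), the Lipschitz property of $|f|$ forces $|f(z)|\leq 3M\omega(\rho)$ throughout $\mathbb{D}(w,\rho)$, and the Cauchy estimate $|f'(w)|\leq\rho^{-1}\max_{|z-w|=\rho}|f(z)|$ at once yields $(\star)$. In Case II ($|f(w)|>2M\omega(\rho)$), one has $|f(z)|\geq |f(w)|/2>0$ on $\overline{\mathbb{D}(w,\rho)}$, so $\log f$ admits a single-valued analytic branch there and $u:=\log|f|$ is harmonic on $\mathbb{D}(w,\rho)$. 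By the mean-value theorem applied to $\log$, $u$ inherits from $|f|$ a $\Lambda_{\omega}$-bound on $\overline{\mathbb{D}(w,\rho)}$ with constant $K\leq 2M/|f(w)|$. The Hardy--Littlewood gradient estimate for harmonic functions with $\Lambda_{\omega}$ boundary data, valid because $\omega$ is fast, applied on $\mathbb{D}(w,\rho)$ then gives
\[
|\nabla u(w)|\,\rho\leq CK\omega(\rho)\leq \frac{2CM\omega(\rho)}{|f(w)|}.
\]
Since $|f'(w)/f(w)|=|\nabla u(w)|$ for the analytic branch of $\log f$, multiplying by $|f(w)|$ cancels the denominator and yields $|f'(w)|\rho\leq 2CM\omega(\rho)$, which is $(\star)$.

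The main subtlety is Case II: one must exploit the \emph{sharp} bound $K\leq 2M/|f(w)|$ on the $\Lambda_{\omega}$-constant of $\log|f|$ (as opposed to the coarser estimate $K\leq 1/\omega(\rho)$), so that the factor $|f(w)|$ cancels when one multiplies $|\nabla u(w)|$ by $|f(w)|$ to recover $|f'(w)|$. The fast-majorant hypothesis enters at precisely two places: through the Hardy--Littlewood gradient estimate used in Case II and through Lappalainen's extension-domain property used in the final path integration.
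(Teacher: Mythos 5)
Your argument is correct, and it is essentially Dyakonov's original proof from \cite{Dy1} rather than the route this paper takes. The paper only quotes Theorem A; what it actually proves is the generalization in Theorem \ref{thm-1.0}, where the hard implication $(\mathscr{A}_{5})\Rightarrow(\mathscr{A}_{2})$ rests on the Schwarz--Pick type Lemma \ref{mate-07} for harmonic quasiregular maps, whose rescaled form (Lemma \ref{mate-07b}) gives, for analytic $f$ and $0<\delta<d_{\mathbb{D}}(z)$, the single inequality $|f'(z)|\leq 2\bigl(\sup_{|w-z|<\delta}|f(w)|-|f(z)|\bigr)/\delta$; combined with the Lipschitz hypothesis on $|f|$ this yields your estimate $(\star)$ with no case distinction and no logarithm, after which the two proofs coincide in the Lappalainen path-integration step. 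Your two-case argument (Cauchy estimate when $|f(w)|$ is small; the branch of $\log f$ and the cancellation $|f'|=|f|\,|\nabla\log|f||$ when it is large, with the crucial constant $K\leq 2M/|f(w)|$) is more self-contained and elementary, whereas the paper's lemma is what lets it scale to $K$-quasiregular harmonic maps and to the integrated $\Lambda_{\omega,p}$ versions. One small correction: the fast-majorant hypothesis is \emph{not} needed for the gradient estimate in your Case II --- since $u-u(w)$ is harmonic on all of $\mathbb{D}(w,\rho)$ and bounded there by $K\omega(\rho)$, the bound $|\nabla u(w)|\leq 2K\omega(\rho)/\rho$ is the elementary interior estimate at the center of a disk and holds for any majorant; fastness enters exactly once, in the final extension-domain integration.
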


In particular, if we take $\omega=\omega_{\alpha}$ in Theorem A, where $\alpha\in(0,1]$ is a constant and $\omega_{\alpha}(t)=t^{\alpha}$ for $t\geq0$, then
we have
\be\label{NB-Lip-1}|f|\in\Lambda_{\omega_{\alpha},\infty}(\mathbb{D})\Leftrightarrow f\in\Lambda_{\omega_{\alpha},\infty}(\mathbb{D}).\ee
Furthermore, in \cite{Dy3}, Dyakonov  posed an open problem on the extension of (\ref{NB-Lip-1}). Let us recall it as follows.

\begin{Prob}\label{Problem-1}
We do not know how to extend (\ref{NB-Lip-1}) to the whole $\Lambda_{\omega_{\alpha},\infty}(\mathbb{D})$-scale with $\alpha\in(0,\infty)$ (higher order $\Lambda_{\omega_{\alpha},\infty}(\mathbb{D})$-spaces
being defined in terms of higher order derivatives); to find the ``right" extension of (\ref{NB-Lip-1})  is an open problem that puzzles us (see \cite[page 606 and lines 5-7]{Dy3}).
\end{Prob}

We give an answer to Problem \ref{Problem-1} as follows.

\begin{Thm}\label{Open-1} It is impossible to  extend {\rm(\ref{NB-Lip-1})} to the whole $\Lambda_{\omega_{\alpha},\infty}(\mathbb{D})$-scale with $\alpha\in(0,\infty)$ unless the analytic function $f$
is a constant.
\end{Thm}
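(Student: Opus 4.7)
The plan is to observe that for $\alpha>1$ the definition of $\Lambda_{\omega_{\alpha},\infty}(\mathbb{D})$ based on the modulus of continuity collapses: the only analytic $f$ in $\Lambda_{\omega_{\alpha},\infty}(\mathbb{D})$ are constants, and likewise the only analytic $f$ with $|f|\in\Lambda_{\omega_{\alpha},\infty}(\mathbb{D})$ are constants. Thus any na\"ive extension of (\ref{NB-Lip-1}) to the full scale $\alpha\in(0,\infty)$ is vacuous, which is exactly what Theorem \ref{Open-1} asserts. The case $\alpha\in(0,1]$ is already covered in (\ref{NB-Lip-1}), so only $\alpha>1$ needs to be addressed.

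For the first direction, suppose $f$ is analytic on $\mathbb{D}$ and $|f(z_{1})-f(z_{2})|\leq M|z_{1}-z_{2}|^{\alpha}$ with $\alpha>1$. Fix $z\in\mathbb{D}$. Letting $w\to z$, the difference quotient satisfies
\[
|f'(z)|=\lim_{w\to z}\frac{|f(w)-f(z)|}{|w-z|}\leq \lim_{w\to z}M|w-z|^{\alpha-1}=0,
\]
so $f'\equiv 0$ and $f$ is constant.

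For the second direction, assume $|f|\in\Lambda_{\omega_{\alpha},\infty}(\mathbb{D})$ with $\alpha>1$ and $f$ analytic. If $f\equiv 0$ there is nothing to prove. Otherwise the zeros of $f$ are isolated, so $\{z\in\mathbb{D}:f(z)\neq 0\}$ is open, connected, and dense in $\mathbb{D}$; on this set $|f|$ is smooth. Applying the same limiting argument to the directional derivatives of $|f|$ gives $\partial_{\theta}|f|(z)=0$ for every $z$ in this set and every $\theta\in[0,2\pi]$, hence $\nabla|f|\equiv 0$ there. Consequently $|f|$ is constant on $\{f\neq 0\}$, and by continuity it is constant on $\mathbb{D}$. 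An analytic function with constant modulus is itself constant, so $f$ must be constant.

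The combination of these two observations is the content of Theorem \ref{Open-1}. The argument is more of a structural observation than a hard proof; the real obstacle is interpretative rather than technical, namely recognizing that any formulation which keeps the raw modulus of continuity $\omega_{\alpha}(t)=t^{\alpha}$ trivializes once $\alpha>1$, so the ``right'' higher-order analogue sought by Dyakonov must inevitably be phrased through higher derivatives, as alluded to in the parenthetical remark of Problem \ref{Problem-1}.
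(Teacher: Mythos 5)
Your proposal is correct and follows essentially the same route as the paper: both proofs observe that for $\alpha>1$ the bound $\big||f(z)|-|f(w)|\big|\le M|z-w|^{\alpha}$ forces every directional derivative of $|f|$ to vanish off the zero set of $f$, whence $f$ is constant. The only (cosmetic) difference is the endgame — you conclude that $|f|$ is constant on the connected dense open set $\{f\neq 0\}$ and invoke the constant-modulus principle, while the paper uses the identity $\big||f|_{z}\big|+\big||f|_{\overline{z}}\big|=|f'|$ to get $f'\equiv 0$ directly and then applies the identity theorem.
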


In \cite{Dy3}, Dyakonov  also proved that the following original Hardy-Littlewood theorem follows from (\ref{NB-Lip-1}).

\begin{CorA}\label{CorA-HL}{\rm (Hardy-Littlewood's Theorem)}
Suppose $u$ is a real-valued harmonic function in $\mathbb{D}$ with
 $u\in\Lambda_{\omega_{\alpha},\infty}(\mathbb{D})$, where $\alpha\in(0,1]$ and $\omega_{\alpha}(t)=t^{\alpha}$ for $t\geq0$.
  Let $v$ be a harmonic conjugate of $u$ with $v(0)=0$. Then $v\in\Lambda_{\omega_{\alpha},\infty}(\mathbb{D})$.
\end{CorA}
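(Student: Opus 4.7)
The plan is to disprove the desired extension of \eqref{NB-Lip-1} by exhibiting an explicit non-constant analytic counterexample, once the meaning of $\Lambda_{\omega_{\alpha},\infty}(\mathbb{D})$ has been pinned down for $\alpha>1$.

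First I would settle the definition. Reading \eqref{rrt-1} literally with $\omega=\omega_{\alpha}$ and $\alpha>1$ is useless: the estimate $|f(z_{1})-f(z_{2})|\le M|z_{1}-z_{2}|^{\alpha}$ implies the differential quotient vanishes everywhere, forcing $f$ constant, so the equivalence would hold but be vacuous. The only sensible reading, and the one indicated in Problem \ref{Problem-1}, is the classical higher-order one: write $\alpha=k+\beta$ with $k=\lfloor\alpha\rfloor\ge 1$ and $\beta\in(0,1]$, and declare $g\in\Lambda_{\omega_{\alpha},\infty}(\mathbb{D})$ when $g\in C^{k}(\mathbb{D})$ and every partial derivative of order $k$ belongs to $\Lambda_{\omega_{\beta},\infty}(\mathbb{D})$ (with the Zygmund interpretation at $\beta=1$). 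In particular every function in $\Lambda_{\omega_{\alpha},\infty}(\mathbb{D})$ with $\alpha>1$ must at least be $C^{1}$ on $\mathbb{D}$.

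Next I would take the simplest possible non-constant analytic test function, $f(z)=z$. Since $f\in C^{\infty}(\overline{\mathbb{D}})$ with $f'\equiv 1$ and all higher derivatives zero, $f$ lies in $\Lambda_{\omega_{\alpha},\infty}(\mathbb{D})$ for every $\alpha>0$. On the other hand $|f(z)|=\sqrt{x^{2}+y^{2}}$ has no partial derivatives at the origin, so $|f|\notin C^{1}(\mathbb{D})$ and hence $|f|\notin\Lambda_{\omega_{\alpha},\infty}(\mathbb{D})$ for any $\alpha>1$. Thus the implication $f\in\Lambda_{\omega_{\alpha},\infty}(\mathbb{D})\Rightarrow |f|\in\Lambda_{\omega_{\alpha},\infty}(\mathbb{D})$ already breaks down for the non-constant function $f(z)=z$ and every $\alpha>1$, which prevents the whole equivalence \eqref{NB-Lip-1} from extending. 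More generally, if $f$ is any non-constant analytic function with a zero $z_{0}\in\mathbb{D}$, the same argument works: $|f|$ has a cusp at $z_{0}$ coming from the branch behaviour of $\sqrt{f\bar f}$ near $z_{0}$, preventing $|f|$ from being $C^{1}$ there; this is what forces the qualifier ``unless $f$ is a constant'' in the statement, since the equivalence is of course trivially true when $f\equiv\textup{const}$.

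The main obstacle is conceptual rather than technical: one must first resist the trap of reading $\Lambda_{\omega_{\alpha},\infty}(\mathbb{D})$ for $\alpha>1$ through \eqref{rrt-1}, where the spaces collapse to constants and \eqref{NB-Lip-1} tautologically survives, and commit instead to the higher-order-derivative definition that Dyakonov's Problem \ref{Problem-1} implicitly requires. Once that is done, the hard analytic content reduces to the well-known non-smoothness of $|z|$ at zero, and no refinement of the higher-order definition can avoid this because $|f|$ cannot match the $C^{1}$ regularity of $f$ near any zero of a non-constant analytic $f$.
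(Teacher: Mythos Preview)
Your proposal does not address the stated result at all. Corollary~A is the classical Hardy--Littlewood theorem: if $u$ is real-valued harmonic on $\mathbb{D}$ and $u\in\Lambda_{\omega_{\alpha},\infty}(\mathbb{D})$ for some $\alpha\in(0,1]$, then the harmonic conjugate $v$ also lies in $\Lambda_{\omega_{\alpha},\infty}(\mathbb{D})$. What you have written is an argument for Theorem~\ref{Open-1}, the impossibility of extending \eqref{NB-Lip-1} to $\alpha>1$. These are entirely different statements: one concerns conjugate harmonic functions for $\alpha\le 1$, the other concerns the modulus $|f|$ for $\alpha>1$. Nothing in your write-up touches harmonic conjugates.

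Even viewed as an attempt at Theorem~\ref{Open-1}, your approach diverges from the paper's and arguably does not prove what the paper claims. The paper keeps the \emph{literal} definition \eqref{rrt-1} for all $\alpha$, and shows directly that $|f|\in\Lambda_{\omega_{\alpha},\infty}(\mathbb{D})$ with $\alpha>1$ forces $\big||f|_{z}\big|+\big||f|_{\overline z}\big|=|f'|=0$ on $\mathbb{D}\setminus\{f=0\}$, hence $f$ is constant; this is precisely the content of the clause ``unless the analytic function $f$ is a constant.'' You instead dismiss the literal definition as vacuous, adopt the higher-order definition, and produce a counterexample $f(z)=z$ to the forward implication $f\in\Lambda_{\omega_{\alpha}}\Rightarrow|f|\in\Lambda_{\omega_{\alpha}}$. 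That is a reasonable reading of Dyakonov's Problem~\ref{Problem-1}, but it is not the statement or the argument the paper gives: the paper never invokes higher-order spaces, and its conclusion (``$f$ must be constant'') is derived from the hypothesis on $|f|$, not from exhibiting an $f$ whose modulus fails the condition.
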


The following results are Lipschitz characterizations of  boundary functions and their harmonic extensions.

 \begin{ThmB}\label{Dya-2}{\rm(see \cite[Corollary 1 (ii)]{Dy1} \mbox{or} \cite[Theorem B]{P})}
Let $\omega$ be a regular majorant,  $f\in\mathscr{A}(\mathbb{D})$, and let the boundary function of $|f|$ belong to $\Lambda_{\omega,\infty}(\mathbb{T})$.
Then $f$ is in $\Lambda_{\omega,\infty}(\mathbb{D})$ if and only if $$P[|f|](z)-|f(z)|\leq M\omega(d_{\mathbb{D}}(z))$$ for some positive constant $M$.
\end{ThmB}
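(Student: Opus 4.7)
The plan combines a direct Poisson-integral estimate for the necessity with a reduction of the sufficiency to Theorem A via a decomposition of $|f|$.

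For the necessity, assume $f\in\Lambda_{\omega,\infty}(\mathbb{D})$. Since $\omega$ is regular and thus fast, $\mathbb{D}$ is a $\Lambda_\omega$-extension domain, so $f$ extends continuously to $\overline{\mathbb{D}}$ and the boundary restriction of $|f|$ lies in $\Lambda_{\omega,\infty}(\mathbb{T})$. Subharmonicity of $|f|$ gives $|f(z)|\leq P[|f|](z)$, and subtracting the mean-value representation of $|f(z)|$ yields
$$P[|f|](z)-|f(z)|\leq \frac{\|f\|_{\Lambda_{\omega},s}}{2\pi}\int_{0}^{2\pi}\omega(|e^{i\tau}-z|)\mathbf{P}(z,e^{i\tau})\,d\tau.$$
The final Poisson integral is the classical regular-majorant estimate: splitting the circle into the arc where $|e^{i\tau}-z/|z||\leq 1-|z|$ and its complement, the first region contributes at most a constant times $\omega(1-|z|)$ by comparing $\mathbf{P}(z,\cdot)$ to $1/(1-|z|)$ there, while the second is bounded by a constant times $(1-|z|)\int_{1-|z|}^{\pi}\omega(t)/t^{2}\,dt$, which is controlled by $\omega(1-|z|)$ via condition (\ref{eq3x}).

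For the sufficiency, my plan is to show $|f|\in\Lambda_{\omega,\infty}(\overline{\mathbb{D}})$ and then invoke Theorem A to obtain $f\in\Lambda_{\omega,\infty}(\overline{\mathbb{D}})$. Write $|f|=P[|f|]-h$, so that by hypothesis $0\leq h(z)\leq M\omega(1-|z|)$, with $h$ continuous on $\overline{\mathbb{D}}$ and vanishing on $\mathbb{T}$. Since the boundary restriction of $|f|$ is in $\Lambda_{\omega,\infty}(\mathbb{T})$ and $\omega$ is regular, the classical Hardy--Littlewood theorem for Poisson extensions of Lipschitz boundary data yields $P[|f|]\in\Lambda_{\omega,\infty}(\overline{\mathbb{D}})$, so it suffices to show $h\in\Lambda_{\omega,\infty}(\overline{\mathbb{D}})$. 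For a pair $z_{1},z_{2}\in\mathbb{D}$ with $|z_{1}-z_{2}|\geq \tfrac{1}{2}\max(1-|z_{1}|,1-|z_{2}|)$, monotonicity of $\omega$ together with the pointwise bound on $h$ gives
$$|h(z_{1})-h(z_{2})|\leq h(z_{1})+h(z_{2})\leq M\omega(1-|z_{1}|)+M\omega(1-|z_{2}|)\leq 4M\omega(|z_{1}-z_{2}|),$$
which handles boundary pairs and Whitney-separated pairs.

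The main obstacle is the local regime, where $z_{1}$ and $z_{2}$ are close and both far from $\partial\mathbb{D}$: here $h=P[|f|]-|f|$ inherits its local oscillation from $|f|$ itself, which is essentially what we are trying to bound. I propose to break this circularity by proving the derivative estimate $(1-|z|)|f'(z)|\leq M\omega(1-|z|)$ directly from the two hypotheses, via Riesz's representation
$$P[|f|^{2}](z)-|f(z)|^{2}=\frac{2}{\pi}\iint_{\mathbb{D}}G(z,w)|f'(w)|^{2}\,dA(w),$$
combined with subharmonicity of $|f'|^{2}$ on a disk of radius comparable to $1-|z|$, and the decomposition of the left-hand side as a Poisson-variance of $|f|$ (controlled by boundary Lipschitzness) plus $(P[|f|]+|f|)(P[|f|]-|f|)$ (controlled by hypothesis and boundedness of $f$). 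The heart of the proof is to avoid a square-root loss at this step; this likely requires sharpening the identity above by working with $|f|$ instead of $|f|^{2}$ (using $\Delta|f|=|f'|^{2}/|f|$ off the zero set of $f$) and treating zeros of $f$ via a Blaschke factorisation $f=Bg$ with $g$ zero-free.
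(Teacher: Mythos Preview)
Your necessity argument is correct and matches the standard Poisson-kernel splitting used in the paper (cf.\ Lemma \ref{lem-4.0}).

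Your sufficiency argument has a genuine gap that you yourself identify but do not resolve. The Green-function route via $P[|f|^{2}]-|f|^{2}$ does indeed incur a square-root loss: the decomposition you write gives
\[
(1-|z|)^{2}|f'(z)|^{2}\;\lesssim\; P[|f|^{2}](z)-|f(z)|^{2}\;\lesssim\;\omega(1-|z|)^{2}+\|f\|_{\infty}\,\omega(1-|z|),
\]
and the second term dominates, yielding only $(1-|z|)|f'(z)|\lesssim\sqrt{\omega(1-|z|)}$. Passing to $\Delta|f|=|f'|^{2}/|f|$ does not help, since you then need a \emph{lower} bound on $|f|$ near $z$, which is exactly what fails near zeros; the Blaschke-factorisation patch is a heuristic, not an argument, and in any case the outer factor $g$ need not inherit the Lipschitz hypothesis on $|f|$ at the boundary in a usable way.

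The paper's approach (following Pavlovi\'c) sidesteps all of this by avoiding the detour through $|f|\in\Lambda_{\omega}(\overline{\mathbb{D}})$ and instead bounding $|f'|$ directly from the \emph{one-sided} oscillation of $|f|$. The key elementary estimate is Lemma G: for $f\in\mathscr{A}(\mathbb{D})$,
\[
\tfrac{1}{2}(1-|z|)|f'(z)|\;\leq\;M_{z}-|f(z)|\;=\;\max_{|w-z|\leq 1-|z|}\bigl(|f(w)|-|f(z)|\bigr).
\]
For $|w-z|\leq d_{\mathbb{D}}(z)$ one has, using subharmonicity of $|f|$,
\[
|f(w)|-|f(z)|\;\leq\;\bigl(P[|f|](w)-P[|f|](z)\bigr)+\bigl(P[|f|](z)-|f(z)|\bigr).
\]
The first bracket is $\lesssim\omega(d_{\mathbb{D}}(z))$ since $P[|f|]\in\Lambda_{\omega,\infty}(\overline{\mathbb{D}})$ by the Hardy--Littlewood extension (Theorem \ref{thm-5.0}), and the second is $\lesssim\omega(d_{\mathbb{D}}(z))$ by hypothesis. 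Hence $(1-|z|)|f'(z)|\lesssim\omega(d_{\mathbb{D}}(z))$ with no loss, and $f\in\Lambda_{\omega,\infty}(\mathbb{D})$ follows from the fast-majorant characterisation. This is the missing idea: use $M_{z}-|f(z)|$ rather than any two-sided or squared quantity.
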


\begin{ThmC}\label{Dya-3}{\rm(\cite[Theorem 1]{Dy1})}
If $f\in\mathscr{A}(\mathbb{D})$ and if both $\omega$ and $\omega^{2}$ are regular majorants, then there is a positive constant $M$
such that \be\label{jjp-1}\frac{1}{M}\|f\|_{\Lambda_{\omega}(\overline{\mathbb{D}}),s}
\leq\sup_{z\in\mathbb{D}}\left\{\frac{\left(P[|f|^{2}](z)-|f(z)|^{2}\right)^{\frac{1}{2}}}{\omega\big(d_{\mathbb{D}}(z)\big)}\right\}\leq M\|f\|_{\Lambda_{\omega}(\overline{\mathbb{D}}),s}.\ee
\end{ThmC}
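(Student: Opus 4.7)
The plan is to derive both inequalities in (\ref{jjp-1}) from the single identity
$$P[|f|^{2}](z) - |f(z)|^{2} = \frac{1}{2\pi}\int_{0}^{2\pi} |f(e^{i\tau}) - f(z)|^{2}\,\mathbf{P}(z, e^{i\tau})\,d\tau, \qquad z \in \mathbb{D},$$
which is valid because $f \in \mathscr{A}(\mathbb{D})$ yields $P[f] = f$ and $P[\overline{f}] = \overline{f}$ in $\mathbb{D}$: expanding the modulus squared and using these two reproducing formulas leaves the stated right-hand side. Since $\omega$ is regular it is in particular fast, so $\mathbb{D}$ is a $\Lambda_{\omega}$-extension domain and the Lappalainen integral criterion is available; in particular the norm $\|f\|_{\Lambda_{\omega}(\overline{\mathbb{D}}),s}$ is well defined.

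For the right-hand inequality I would assume $f \in \Lambda_{\omega}(\overline{\mathbb{D}})$ and substitute $|f(e^{i\tau}) - f(z)|^{2} \leq \|f\|_{\Lambda_{\omega}(\overline{\mathbb{D}}),s}^{2}\,\omega^{2}(|e^{i\tau} - z|)$ into the identity. The problem then reduces to the Poisson-majorant estimate
$$\frac{1}{2\pi}\int_{0}^{2\pi} \omega^{2}(|e^{i\tau} - z|)\,\mathbf{P}(z, e^{i\tau})\,d\tau \leq M\,\omega^{2}(d_{\mathbb{D}}(z)),$$
a standard consequence of the regularity of $\omega^{2}$: the near piece $|e^{i\tau} - z| \leq 2 d_{\mathbb{D}}(z)$ contributes at most $\omega^{2}(2d_{\mathbb{D}}(z)) \leq M \omega^{2}(d_{\mathbb{D}}(z))$ by fastness, while on the complementary piece one bounds $\mathbf{P}(z, e^{i\tau}) \leq M d_{\mathbb{D}}(z)/|e^{i\tau} - z|^{2}$ and applies the slow condition (\ref{eq3x}) to $\omega^{2}$.

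For the left-hand inequality let $A$ denote the central supremum in (\ref{jjp-1}). The core step is the pointwise derivative estimate
$$|f'(z)|\,d_{\mathbb{D}}(z) \leq M\,\bigl(P[|f|^{2}](z) - |f(z)|^{2}\bigr)^{1/2}, \qquad z \in \mathbb{D}.$$
To obtain it, apply Cauchy's formula on the circle $|\zeta - z| = d_{\mathbb{D}}(z)/2$ to the analytic function $g:=f - f(z)$ and use Cauchy--Schwarz to control $|f'(z)|^{2}\,d_{\mathbb{D}}(z)^{2}$ by the circle $L^{2}$-mean of $|g|^{2}$. Since $|g|^{2}$ is subharmonic in $\mathbb{D}$ and continuous up to the boundary, its least harmonic majorant is $P[|g|^{2}]$, so this circle mean is in turn at most $P[|g|^{2}](z) = P[|f|^{2}](z) - |f(z)|^{2}$ (invoking $P[f] = f$ once more to eliminate the cross term). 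The derivative estimate then yields $|f'(z)| \leq M A\,\omega(d_{\mathbb{D}}(z))/d_{\mathbb{D}}(z)$. Fastness of $\omega$ and Lappalainen's criterion produce, for each pair $z_{1},z_{2}\in\overline{\mathbb{D}}$, a rectifiable curve $\gamma \subset \mathbb{D}$ satisfying $\int_{\gamma} \omega(d_{\mathbb{D}}(\zeta))/d_{\mathbb{D}}(\zeta)\,ds \leq M \omega(|z_{1} - z_{2}|)$; integrating $|f'|$ along $\gamma$ gives $|f(z_{1}) - f(z_{2})| \leq M A\,\omega(|z_{1} - z_{2}|)$, as required.

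The main obstacle is the pointwise derivative bound in the lower inequality: Cauchy's formula on an interior circle alone delivers only a circle mean of $|f - f(z)|$, and the step converting this mean to the Poisson-type quantity $P[|f|^{2}](z) - |f(z)|^{2}$ depends on recognising $P[|f - f(z)|^{2}]$ as the least harmonic majorant of $|f - f(z)|^{2}$ and then using $P[f] = f$ to simplify. A secondary technical point is that the two directions really use the hypothesis ``$\omega$ and $\omega^{2}$ both regular'' in complementary ways — the upper bound uses the full regularity of $\omega^{2}$ to run the Poisson-majorant estimate, whereas the lower bound uses only fastness of $\omega$ to invoke the $\Lambda_{\omega}$-extension property — so dropping either hypothesis breaks one of the two directions.
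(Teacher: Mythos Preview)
Your proposal is correct. Note first that Theorem~C is quoted from \cite{Dy1} and not proved in the paper; however, the paper's proof of Theorem~\ref{thm-8} (the $L^p$ extension) specializes to a proof of Theorem~C when $p=\infty$, and your argument matches that one almost exactly. Both rest on the identity
\[
P[|f|^{2}](z)-|f(z)|^{2}=\frac{1}{2\pi}\int_{0}^{2\pi}|f(e^{i\tau})-f(z)|^{2}\,\mathbf{P}(z,e^{i\tau})\,d\tau,
\]
and for the upper bound both substitute the Lipschitz estimate and invoke the Poisson--majorant inequality (the paper cites \cite[Lemma~2]{Dy1}, you sketch the near/far split). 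The one genuine difference is in the derivative bound for the lower inequality: the paper applies Cauchy's formula on the \emph{unit} circle and observes directly that $d_{\mathbb{D}}(z)/|\zeta-z|^{2}\leq\mathbf{P}(z,\zeta)$, so Cauchy--Schwarz against the Poisson measure yields $d_{\mathbb{D}}(z)|f'(z)|\leq(P[|f|^{2}](z)-|f(z)|^{2})^{1/2}$ in one stroke; you instead take Cauchy's formula on the interior circle of radius $d_{\mathbb{D}}(z)/2$ and then pass to $P[|f-f(z)|^{2}](z)$ via the least harmonic majorant. Your route is slightly longer but equally valid, and both finish via fastness of $\omega$ and the $\Lambda_{\omega}$-extension property.
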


We remark that the norm appearing in the middle of the inequality (\ref{jjp-1}) can be regarded as an analogue of the so-called Garsia norm on the space $BMO$ (see \cite{Dy1} or \cite[Chapter VI]{Ga}).



 Recently, the equivalent norms  and Hardy-Littlewood type theorems on Lipschitz type spaces have attracted much attention of many authors
 (see \cite{AKM,AP-2017,MVM,CH-Riesz,CSR,CPR-2014,CPW-11,CP-2013,Dy2,Dy3,Dy4,P,Pav-2007,Q-W}). The first purpose of this paper is to use some  new techniques,
 in conjunction with some methods of  Dyakonov \cite{Dy1} and Pavlovi\'c \cite{P},
 to study the equivalent norms  and Hardy-Littlewood type theorems for $f\in\mathscr{H}(\mathbb{D})\cap\Lambda_{\omega,p}(\mathbb{D})$ and
 $f\in\mathscr{A}(\mathbb{D})\cap\Lambda_{\omega,p}(\mathbb{D})$, where $p\in[1,\infty]$. In the following, we will show that  Theorem A is equivalent to Corollary A. It is
 read as follows.

 \begin{Thm}\label{thm-1.0}
Suppose that $p\in[1,\infty]$ is a constant, $\omega$ is a fast majorant and $f=u+iv=h+\overline{g}\in\mathscr{H}(\mathbb{D})$  is continuous up to the boundary $\mathbb{T}$ of $\mathbb{D}$, where
 $h,~g\in\mathscr{A}(\mathbb{D})$ and $u, v$ are
real-valued  functions  in $\mathbb{D}$.
Then the following statements are equivalent.
\begin{enumerate}
\item[{\rm ($\mathscr{A}_{1}$)}] There is a positive constant $M$ such that, for $z\in\mathbb{D}$,
$$
\left\{
\begin{array}{ll}
\left(\int_{0}^{2\pi}\left(\mathscr{M}_{f}(ze^{i\eta})\right)^{p}d\eta\right)^{\frac{1}{p}}\leq M\frac{\omega\big(d_{\mathbb{D}}(z)\big)}{d_{\mathbb{D}}(z)}, & p\in [1,\infty), \\
\mathscr{M}_{f}(z) \leq M\frac{\omega\big(d_{\mathbb{D}}(z)\big)}{d_{\mathbb{D}}(z)}, & p=\infty ;
\end{array}
\right.
$$
\item[{\rm ($\mathscr{A}_{2}$)}] $f\in\Lambda_{\omega,p}(\overline{\mathbb{D}});$
\item[{\rm ($\mathscr{A}_{3}$)}] $h,~g\in\Lambda_{\omega,p}(\overline{\mathbb{D}});$
\item[{\rm ($\mathscr{A}_{4}$)}] $|h|,~|g|\in\Lambda_{\omega,p}(\overline{\mathbb{D}});$
\end{enumerate}
Furthermore, assume that $f$ is also
$K$-quasiregular in $\mathbb{D}$, where $K\in[1,\infty)$ is a constant.  Then, the conditions {\rm ($\mathscr{A}_{1}$)}$\sim${\rm ($\mathscr{A}_{4}$)} are equivalent to
the conditions {\rm ($\mathscr{A}_{5}$)}$\sim${\rm ($\mathscr{A}_{11}$)}.
\begin{enumerate}
\item[{\rm ($\mathscr{A}_{5}$)}] $|f|\in\Lambda_{\omega,p}(\overline{\mathbb{D}});$
\item[{\rm ($\mathscr{A}_{6}$)}] $u\in\Lambda_{\omega,p}(\overline{\mathbb{D}});$
\item[{\rm ($\mathscr{A}_{7}$)}] $|u|\in\Lambda_{\omega,p}(\overline{\mathbb{D}});$
\item[{\rm ($\mathscr{A}_{8}$)}] $v\in\Lambda_{\omega,p}(\overline{\mathbb{D}});$
\item[{\rm ($\mathscr{A}_{9}$)}] $|v|\in\Lambda_{\omega,p}(\overline{\mathbb{D}});$
\item[{\rm ($\mathscr{A}_{10}$)}] There is a positive constant $M$ such that, for $z\in\mathbb{D}$,
$$\mathscr{Y}_{u,q,p}(z)
\leq M\frac{\omega\big(d_{\mathbb{D}}(z)\big)}{d_{\mathbb{D}}(z)},$$ where  $p\vee q:=\{p,q:~0<q\leq p<\infty\}$
or  $p\wedge q:=\{p,q:~0<q<p=\infty\}$,
$$\mathscr{Y}_{u,q,p}(z)=
\left\{
\begin{array}{ll}
\left(\int_{0}^{2\pi}\left(\frac{\int_{\mathbb{D}\left(z,d_{\mathbb{D}}(z)/2\right)}\left(\mathscr{M}_{u}(\xi e^{i\eta})\right)^{q}dA(\xi)}
{\left|\mathbb{D}\left(z,d_{\mathbb{D}}(z)/2\right)\right|}
\right)^{\frac{p}{q}}d\eta\right)^{\frac{1}{p}}, & p\vee q, \\
\left(\frac{1}
{\left|\mathbb{D}\left(z,d_{\mathbb{D}}(z)/2\right)\right|}\int_{\mathbb{D}\left(z,d_{\mathbb{D}}(z)/2\right)}\left(\mathscr{M}_{u}(\xi)\right)^{q}dA(\xi)
\right)^{\frac{1}{q}}, & p\wedge q,
\end{array}
\right.
$$
 $\left|\mathbb{D}\big(z,d_{\mathbb{D}}(z)/2\big)\right|$
denotes the area of $\mathbb{D}\big(z,d_{\mathbb{D}}(z)/2\big)$,  and  $dA$ denotes  the Lebesgue area measure on $\mathbb{D}$;
\item[{\rm ($\mathscr{A}_{11}$)}] There is a positive constant $M$ such that, for $z\in\mathbb{D}$,
$$\mathscr{Y}_{v,q,p}(z)
\leq M\frac{\omega\big(d_{\mathbb{D}}(z)\big)}{d_{\mathbb{D}}(z)},$$ where $0<q\leq p<\infty$
or $0<q<p=\infty$.
\end{enumerate}
\end{Thm}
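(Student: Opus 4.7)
The plan is to divide the eleven conditions into three natural clusters. The first cluster $(\mathscr{A}_1) \Leftrightarrow (\mathscr{A}_2)$ is a Hardy--Littlewood type characterization of the Lipschitz norm by the boundary growth of the gradient, valid for arbitrary $f \in \mathscr{H}(\mathbb{D})$. The second cluster $(\mathscr{A}_2) \Leftrightarrow (\mathscr{A}_3) \Leftrightarrow (\mathscr{A}_4)$ uses only the decomposition $f = h + \overline{g}$ together with Theorem~A. The third cluster $(\mathscr{A}_5)$--$(\mathscr{A}_{11})$ is reached under the additional assumption of $K$-quasiregularity, which allows the full pointwise information of $f$ to be recovered from its modulus and from its real or imaginary parts.

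For $(\mathscr{A}_2) \Rightarrow (\mathscr{A}_1)$, I would exploit harmonicity by a standard Bloch-type estimate on $\mathbb{D}(z, d_{\mathbb{D}}(z)/2)$: the mean value property yields $\mathscr{M}_f(z) \leq M\,\omega(d_{\mathbb{D}}(z))/d_{\mathbb{D}}(z)$, and for $p < \infty$ an $L^p$-average over rotations together with Minkowski's integral inequality gives the circular version. Conversely, for $(\mathscr{A}_1) \Rightarrow (\mathscr{A}_2)$, I would use that $\mathbb{D}$ is a $\Lambda_\omega$-extension domain for every fast majorant: given $z_1, z_2 \in \mathbb{D}$, Lappalainen's criterion supplies a rectifiable curve $\gamma$ with $\int_\gamma \omega(d_{\mathbb{D}}(\zeta))/d_{\mathbb{D}}(\zeta)\, ds(\zeta) \leq M\,\omega(|z_1 - z_2|)$, and the estimate $|f(z_1) - f(z_2)| \leq \int_\gamma \mathscr{M}_f\, ds$ combined with $(\mathscr{A}_1)$, and Minkowski's inequality applied after rotation for $p < \infty$, closes the loop. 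Then $(\mathscr{A}_3) \Rightarrow (\mathscr{A}_2)$ is trivial from $f = h + \overline{g}$, while $(\mathscr{A}_2) \Rightarrow (\mathscr{A}_3)$ follows from the boundedness of the M.~Riesz / Szeg\H{o} projection on $\Lambda_{\omega,p}$ for fast $\omega$. Finally $(\mathscr{A}_3) \Leftrightarrow (\mathscr{A}_4)$ reduces to applying Theorem~A --- or, for $p < \infty$, an $L^p$-averaged refinement of it --- to the analytic functions $h$ and $g$ separately.

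Under the additional quasiregularity hypothesis $\mathscr{M}_f^2 \leq K J_f$, the pointwise inequality $\bigl|\nabla|f|\bigr| \geq K^{-1/2}\mathscr{M}_f$ holds wherever $f$ does not vanish, so Lipschitz control of $|f|$ transfers, via the first cluster applied to $|f|$, back to Lipschitz control of $f$, yielding $(\mathscr{A}_5) \Leftrightarrow (\mathscr{A}_2)$; the zero set of a non-constant quasiregular $f$ is discrete, so the degenerate points do not create a genuine obstruction. The equivalences $(\mathscr{A}_6) \sim (\mathscr{A}_9)$ follow by applying the same reasoning to $u = \re f$ and $v = \im f$, using that each is the real part of an analytic function built from $h$ and $g$. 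For $(\mathscr{A}_{10})$ and $(\mathscr{A}_{11})$, note that for real harmonic $u$ one has $\mathscr{M}_u = |h'|$ with $h'$ analytic, so $\mathscr{M}_u^q$ is subharmonic for every $q > 0$; the disk averages appearing in $\mathscr{Y}_{u,q,p}(z)$ can therefore be compared with pointwise values of $\mathscr{M}_u$ via a local sub-mean-value inequality, while H\"older's inequality connects the $q$-scale with the $p$-scale, matching $(\mathscr{A}_{10})$ with the gradient estimate $(\mathscr{A}_1)$ written for $u$.

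The main obstacle will be the $L^p$-averaged refinement of Theorem~A needed in the second cluster: Dyakonov's proof for $p = \infty$ relies on subharmonicity of $|h|^\alpha$ combined with sup-norm comparisons, and recasting it in terms of the rotation-averaged norm $\mathcal{L}_p$ for a general fast majorant $\omega$ will require new quantitative arguments, presumably combining Hardy space techniques with the rotational structure of $\Lambda_{\omega,p}(\mathbb{D})$. A secondary technical point is verifying the boundedness of the M.~Riesz projection on $\Lambda_{\omega,p}$ over the full range $p \in [1,\infty]$ and every fast $\omega$, where the endpoint $p = 1$ is borderline and may require a substitute argument going through the conjugate function theorem in Lipschitz scales.
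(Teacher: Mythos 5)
Your overall architecture (three clusters driven by the gradient characterization $(\mathscr{A}_1)\Leftrightarrow(\mathscr{A}_2)$) matches the paper, and your treatment of $(\mathscr{A}_1)\Leftrightarrow(\mathscr{A}_2)$ and of $(\mathscr{A}_{10})$, $(\mathscr{A}_{11})$ is essentially the paper's. One remark on the second cluster: you do not need boundedness of the M.~Riesz projection, and the $p=1$ endpoint worry evaporates. The paper obtains $(\mathscr{A}_2)\Rightarrow(\mathscr{A}_3)$ directly from the gradient characterization: with $F=h+g$, the functions $u=\re f=\re F$ and $\widetilde v=\im F$ satisfy $\mathscr{M}_u=\mathscr{M}_{\widetilde v}=|h'+g'|$, so $u\in\Lambda_{\omega,p}$ forces $\widetilde v\in\Lambda_{\omega,p}$ via $(\mathscr{A}_1)\Rightarrow(\mathscr{A}_2)$, and then $v_1=(v+\widetilde v)/2$, $v_2=(\widetilde v-v)/2$ together with $\mathscr{M}_{u_j}=\mathscr{M}_{v_j}$ recover $h$ and $g$. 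Harmonic conjugation comes for free once the first cluster is in place.

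The genuine gap is in the modulus implications. You propose to pass from $|f|\in\Lambda_{\omega,p}$ to a gradient bound on $|f|$ ``via the first cluster applied to $|f|$'', and thence to $\mathscr{M}_f$ through the pointwise inequality $\bigl|\nabla|f|\bigr|\geq K^{-1}\mathscr{M}_f$ off the zero set. But the direction $(\mathscr{A}_2)\Rightarrow(\mathscr{A}_1)$ of the first cluster is proved by Poisson-representing $f$ on subdisks and therefore requires harmonicity; $|f|$ is not harmonic, so that step is unavailable. Nor does the pointwise inequality rescue the argument: it compares two infinitesimal quantities at $z$, whereas the hypothesis only gives the macroscopic oscillation bound $\bigl||f(w)|-|f(z)|\bigr|\leq M\omega(|w-z|)$, and $\lim_{t\to0^+}\omega(t)/t$ may be infinite (e.g.\ $\omega(t)=t^{1/2}$), so no useful pointwise bound on $\bigl|\nabla|f|\bigr|(z)$ follows from it. What is needed --- and what the paper supplies in Lemmas \ref{mate-07}--\ref{mate-07d} and, for $(\mathscr{A}_7)$, $(\mathscr{A}_9)$, in Lemma \ref{real-harmonic-07d} --- is a Schwarz--Pick type lemma for harmonic $K$-quasiregular maps, namely $\mathscr{M}_f(z)\leq 2K\bigl(M_{z,\delta}-|f(z)|\bigr)/\delta$ with $M_{z,\delta}=\sup_{|w-z|<\delta}|f(w)|$, together with its $L^p$ area-averaged consequence
$$\mathscr{M}_f^p(z)\leq \frac{M}{\varepsilon^{p+2}}\int_{\mathbb{D}(z,\varepsilon)}\{|f(w)|-|f(z)|\}_+^p\,dm(w),$$
which converts the rotation-averaged Lipschitz hypothesis on $|f|$ into the growth estimate $(\mathscr{A}_1)$. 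This is exactly the ``$L^p$-averaged refinement of Theorem A'' that you flag as the main obstacle; the route you sketch in its place does not close it, and without it the implications $(\mathscr{A}_4)\Rightarrow(\mathscr{A}_3)$, $(\mathscr{A}_5)\Rightarrow(\mathscr{A}_2)$, $(\mathscr{A}_7)\Rightarrow(\mathscr{A}_6)$ and $(\mathscr{A}_9)\Rightarrow(\mathscr{A}_8)$ remain unproven.
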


We remark that ($\mathscr{A}_{1}$)  is  equivalent to $f\in\Lambda_{\omega,p}(\mathbb{D})$ without the assumption ``$f$ is continuous on $\mathbb{T}$" in Theorem
\ref{thm-1.0}.
 The following result easily follows from Theorem \ref{thm-1.0}.

\begin{Cor}\label{An-th-0.1}
Suppose that $p\in[1,\infty]$ is a constant, $\omega$ is a fast majorant and $f=u+iv\in\mathscr{A}(\mathbb{D})$, where $u$ and $v$ are
real-valued functions in $\mathbb{D}$ that are continuous up to $\mathbb{T}$.
Then the following statements are equivalent.
\begin{enumerate}
\item[{\rm ($\mathscr{B}_{1}$)}] There is a positive constant $M$ such that, for $z\in\mathbb{D}$,
$$
\left\{
\begin{array}{ll}
\left(\int_{0}^{2\pi}|f'(ze^{i\eta})|^{p}d\eta\right)^{\frac{1}{p}}\leq M\frac{\omega\big(d_{\mathbb{D}}(z)\big)}{d_{\mathbb{D}}(z)}, & p\in [1,\infty)\\
|f'(z)|\leq M\frac{\omega\big(d_{\mathbb{D}}(z)\big)}{d_{\mathbb{D}}(z)}, & p=\infty;
\end{array}
\right.
$$
\item[{\rm ($\mathscr{B}_{2}$)}] $f\in\Lambda_{\omega,p}(\overline{\mathbb{D}});$
\item[{\rm ($\mathscr{B}_{3}$)}] $|f|\in\Lambda_{\omega,p}(\overline{\mathbb{D}});$
\item[{\rm ($\mathscr{B}_{4}$)}] $u\in\Lambda_{\omega,p}(\overline{\mathbb{D}});$
\item[{\rm ($\mathscr{B}_{5}$)}] $|u|\in\Lambda_{\omega,p}(\overline{\mathbb{D}});$
\item[{\rm ($\mathscr{B}_{6}$)}] $v\in\Lambda_{\omega,p}(\overline{\mathbb{D}});$
\item[{\rm ($\mathscr{B}_{7}$)}] $|v|\in\Lambda_{\omega,p}(\overline{\mathbb{D}});$
\item[{\rm ($\mathscr{B}_{8}$)}] There is a positive constant $M$ such that, for $z\in\mathbb{D}$,
$$\mathscr{Y}_{u,q,p}(z)
\leq M\frac{\omega\big(d_{\mathbb{D}}(z)\big)}{d_{\mathbb{D}}(z)},$$ where $0<q\leq p<\infty$
or $0<q<p=\infty$;
\item[{\rm ($\mathscr{B}_{9}$)}] There is a positive constant $M$ such that, for $z\in\mathbb{D}$,
$$\mathscr{Y}_{v,q,p}(z)
\leq M\frac{\omega\big(d_{\mathbb{D}}(z)\big)}{d_{\mathbb{D}}(z)},$$ where $0<q\leq p<\infty$
or $0<q<p=\infty$.
\end{enumerate}
\end{Cor}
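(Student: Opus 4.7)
The plan is to deduce Corollary \ref{An-th-0.1} as a direct specialization of Theorem \ref{thm-1.0} applied with the trivial analytic decomposition $f = h + \overline{g}$, where $h := f$ and $g \equiv 0$. The constant case is immediate (every condition ($\mathscr{B}_1$)--($\mathscr{B}_9$) holds with $M = 0$), so I focus on $f$ non-constant.

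First I would verify that a non-constant analytic $f \in \mathscr{A}(\mathbb{D})$ meets the $K$-quasiregularity hypothesis required by the second part of Theorem \ref{thm-1.0}. Since $f_{\overline{z}} \equiv 0$, one has $\mathscr{M}_f(z) = |f'(z)|$ and $J_f(z) = |f'(z)|^2$, so the distortion inequality $\mathscr{M}_f^2 \leq K J_f$ holds pointwise with $K = 1$. The critical set $\{z \in \mathbb{D} : f'(z) = 0\}$ is discrete (because $f'$ is analytic and not identically zero), hence of two-dimensional Lebesgue measure zero, so $J_f > 0$ a.e.\ in $\mathbb{D}$. Smoothness of $f$ supplies the $ACL^2$ hypothesis, and membership in $\mathscr{A}(\mathbb{D})$ provides continuity on $\overline{\mathbb{D}}$. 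Thus $f$ is a $1$-quasiregular complex-valued harmonic function that is continuous up to $\mathbb{T}$.

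With these hypotheses in hand, the full set of equivalences ($\mathscr{A}_1$)--($\mathscr{A}_{11}$) of Theorem \ref{thm-1.0} is available. The remainder of the argument is bookkeeping: ($\mathscr{A}_1$) becomes ($\mathscr{B}_1$) via the identity $\mathscr{M}_f = |f'|$; ($\mathscr{A}_2$) is ($\mathscr{B}_2$); conditions ($\mathscr{A}_3$) and ($\mathscr{A}_4$) collapse under $g \equiv 0$ to ($\mathscr{B}_2$) and ($\mathscr{B}_3$) respectively; ($\mathscr{A}_5$) coincides with ($\mathscr{B}_3$); and ($\mathscr{A}_6$)--($\mathscr{A}_{11}$) are literally ($\mathscr{B}_4$)--($\mathscr{B}_9$) under the identifications $u = \re f$ and $v = \im f$. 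Chaining these identifications through the equivalences of Theorem \ref{thm-1.0} yields the equivalence of ($\mathscr{B}_1$)--($\mathscr{B}_9$).

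I do not anticipate any genuine obstacle, since the proof is essentially a translation from the harmonic/quasiregular framework of Theorem \ref{thm-1.0} into the purely analytic framework of the corollary. The only non-cosmetic point is the verification that a non-constant analytic function falls under the paper's definition of $1$-quasiregularity, and this was dispatched above by noting that the critical set of $f$ has planar measure zero.
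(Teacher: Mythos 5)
Your proposal is correct and takes essentially the same route as the paper, which simply asserts that the corollary ``easily follows from Theorem \ref{thm-1.0}'': you specialize the decomposition to $h=f$, $g\equiv 0$, and verify that a non-constant $f\in\mathscr{A}(\mathbb{D})$ is $1$-quasiregular in the paper's sense (with the constant case handled trivially), which is precisely the detail the paper leaves implicit.
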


We remark that Corollary \ref{An-th-0.1} is an   improvement and extension of Theorem A and Corollary A.

In order to extend Theorems B and C, we need to
establish another version of Hardy-Littlewood type theorem on complex-valued  harmonic functions. Before presenting our results, let us recall the classical  Hardy-Littlewood theorem
of complex-valued harmonic functions  as follows (cf.
 \cite{H-L-31,H-L,Pav-2008,Pri}).

 \begin{ThmD}\label{H-L}{\rm (Hardy-Littlewood's Theorem)}
 If $\varphi\in\Lambda_{\omega_{\beta},\infty}(\mathbb{T})$, then
$P[\varphi]\in\Lambda_{\omega_{\beta},\infty}(\overline{\mathbb{D}})$, where $\beta\in(0,1)$ and $\omega_{\beta}(t)=t^{\beta}$ for $t\geq0$.
 \end{ThmD}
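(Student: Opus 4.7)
The plan is to prove Theorem D by the classical two-step route: first establish a gradient estimate for $u := P[\varphi]$ of the form
$$\mathscr{M}_{u}(z) \leq M(1-|z|)^{\beta-1}, \quad z \in \mathbb{D},$$
and then recover the global H\"older estimate by integrating $\mathscr{M}_u$ along suitable paths.

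For the gradient estimate, I would use the identity $\int_0^{2\pi}\partial_z \mathbf{P}(z,e^{i\tau})\,d\tau = 0$ (coming from $P[1]\equiv 1$) to subtract off a constant and write, with $z = re^{i\theta}$,
$$u_z(z) = \frac{1}{2\pi}\int_0^{2\pi}\bigl(\varphi(e^{i\tau})-\varphi(e^{i\theta})\bigr)\partial_z \mathbf{P}(z,e^{i\tau})\,d\tau,$$
and similarly for $u_{\bar z}$. The standard bounds $|\partial_z \mathbf{P}(z,e^{i\tau})| + |\partial_{\bar z}\mathbf{P}(z,e^{i\tau})| \leq M|e^{i\tau}-z|^{-2}$, together with the hypothesis $|\varphi(e^{i\tau})-\varphi(e^{i\theta})| \leq M|e^{i\tau}-e^{i\theta}|^{\beta}$ and the triangle-type inequality $|e^{i\tau}-e^{i\theta}|^\beta \leq M\bigl(|e^{i\tau}-z|^\beta + (1-r)^\beta\bigr)$, reduce everything to the two elementary integrals
$$\int_0^{2\pi}\frac{d\tau}{|e^{i\tau}-z|^{2-\beta}} \leq \frac{M}{(1-r)^{1-\beta}} \qquad \text{and} \qquad (1-r)^\beta\int_0^{2\pi}\frac{d\tau}{|e^{i\tau}-z|^{2}} \leq \frac{M}{(1-r)^{1-\beta}},$$
both valid precisely because $\beta\in(0,1)$. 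This yields $\mathscr{M}_u(z) \leq M\omega_\beta(d_{\mathbb{D}}(z))/d_{\mathbb{D}}(z)$.

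For the passage from the gradient bound to the global H\"older estimate, I would invoke the machinery recalled in the excerpt: since $\omega_\beta$ with $\beta\in(0,1)$ is a fast majorant, $\mathbb{D}$ is a $\Lambda_{\omega_\beta}$-extension domain in the sense of Lappalainen, so every pair $z_1,z_2 \in \mathbb{D}$ can be joined by a rectifiable curve $\gamma\subset\mathbb{D}$ with
$$\int_\gamma \frac{\omega_\beta(d_{\mathbb{D}}(\zeta))}{d_{\mathbb{D}}(\zeta)}\,ds(\zeta) \leq M\omega_\beta(|z_1-z_2|).$$
Integrating $\mathscr{M}_u$ along such a $\gamma$ gives $|u(z_1)-u(z_2)| \leq M\omega_\beta(|z_1-z_2|)$ in $\mathbb{D}$. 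Because $\varphi$ is continuous on $\mathbb{T}$, the standard Poisson-integral theory guarantees that $u$ extends continuously to $\overline{\mathbb{D}}$ with boundary values $\varphi$, so the estimate passes to $\overline{\mathbb{D}}$, finishing the proof.

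The main obstacle is organizing the singular integral in the gradient step so that the constraint $\beta < 1$ enters naturally; the exponent $2-\beta$ in $|e^{i\tau}-z|^{-(2-\beta)}$ is integrable on $\mathbb{T}$ only when $\beta > 0$, and the resulting bound $(1-r)^{\beta-1}$ degenerates at $\beta = 1$, matching the well-known failure of the conclusion at the endpoint (where the Zygmund class replaces $\Lambda_{\omega_1}$). Once the gradient estimate is in hand, the second step is a routine consequence of the extension-domain structure already recalled from \cite{L,Dy2}.
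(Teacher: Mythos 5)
Your argument is correct, but it follows the classical Hardy--Littlewood route rather than the one the paper uses. The paper states Theorem D without proof, as a recalled classical result; inside the paper's own machinery it is obtained as the special case $\omega=\omega_{\beta}$, $p=\infty$ of Theorem \ref{thm-5.0} (or Corollary \ref{cor-0.11}), because $\delta\int_{\delta}^{\pi}t^{\beta-2}\,dt\leq(1-\beta)^{-1}\delta^{\beta}$ shows that $\omega_{\beta}$ is slow precisely when $\beta<1$. That proof never estimates derivatives: it bounds $|P[\varphi](z_{1})-P[\varphi](z_{2})|$ directly by splitting into an angular difference (controlled by averaging the boundary Lipschitz condition against the Poisson kernel) and a radial difference (controlled by Lemma \ref{lem-4.0}, i.e. $|\varphi(\widetilde{z})-P[\varphi](z)|\leq M\omega(d_{\mathbb{D}}(z))$, which is where the slow condition enters). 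Your route instead proves the gradient bound $\mathscr{M}_{P[\varphi]}(z)\leq M(1-|z|)^{\beta-1}$ and then integrates along Lappalainen curves, which is legitimate here because $\omega_{\beta}$ is also fast, hence $\mathbb{D}$ is a $\Lambda_{\omega_{\beta}}$-extension domain; in effect you are reproving the implication $(\mathscr{A}_{1})\Rightarrow(\mathscr{A}_{2})$ of Theorem \ref{thm-1.0} and supplying the Poisson-kernel estimate that gives $(\mathscr{A}_{1})$. The trade-off is that your argument needs both the fast and the slow behaviour of $\omega_{\beta}$ (fastness for the path integration, $\beta<1$ for the singular integral $\int_{0}^{2\pi}|e^{i\tau}-z|^{\beta-2}\,d\tau\leq M(1-|z|)^{\beta-1}$), whereas the paper's derivative-free argument needs only the slow condition and therefore yields the stronger Theorem \ref{thm-5.0}, including the converse $(\mathscr{C}_{1})\Rightarrow(\mathscr{C}_{2})$ at $p=\infty$. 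One small slip in your closing remark: $\int_{0}^{2\pi}|e^{i\tau}-z|^{-(2-\beta)}\,d\tau$ is finite for every $z\in\mathbb{D}$ and every $\beta$, since the integrand is bounded by $(1-|z|)^{-(2-\beta)}$; what fails at $\beta=1$ is not integrability but the order of growth, which becomes $\log\frac{1}{1-|z|}$ instead of $(1-|z|)^{\beta-1}$ --- consistent with the Zygmund-class endpoint phenomenon you mention.
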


 Nolder and Oberlin generalized Theorem D, and established a
 Hardy-Littlewood theorem for a  differentiable majorant (see \cite[Theorem 1.5]{No}).
 Later, Dyakonov \cite{Dy1} generalized Theorem D to complex-valued harmonic functions as follows.
 For  some related studies, we refer the readers to \cite{AKM,Ch-18,CSR,Dy2,Dy3,GK,No,P,Pav-2007,P-08} for details.


 \begin{ThmE}\label{Th-B}{\rm (\cite[Lemma 4]{Dy1})}  Let $\omega$ be a regular majorant.
  If   $\varphi\in\Lambda_{\omega,\infty}(\mathbb{T})$,
then $P[\varphi]\in\Lambda_{\omega,\infty}(\overline{\mathbb{D}})$.
\end{ThmE}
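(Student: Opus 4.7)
The plan is to reduce the Lipschitz estimate for $P[\varphi]$ to a pointwise gradient bound and then integrate along a suitable curve. \textbf{Step 1 (Gradient estimate).} First, I would establish that, for every $z\in\mathbb{D}\setminus\{0\}$,
\[
|\nabla P[\varphi](z)|\leq M\,\frac{\omega(1-|z|)}{1-|z|}.
\]
Differentiating under the integral and using $\int_{0}^{2\pi}\nabla_{z}\mathbf{P}(z,e^{i\tau})\,d\tau=0$ (since $\int_{0}^{2\pi}\mathbf{P}(z,e^{i\tau})\,d\tau=2\pi$ is independent of $z$), write $e^{i\theta}=z/|z|$ and
\[
\nabla P[\varphi](z)=\frac{1}{2\pi}\int_{0}^{2\pi}\bigl[\varphi(e^{i\tau})-\varphi(e^{i\theta})\bigr]\nabla_{z}\mathbf{P}(z,e^{i\tau})\,d\tau.
\]
Combining $|\nabla_{z}\mathbf{P}(z,e^{i\tau})|\leq C/|e^{i\tau}-z|^{2}$ with the Lipschitz hypothesis on $\varphi$ and changing variables to $t=\tau-\theta$ reduces the task to bounding
\[
\mathcal{I}(r):=\int_{-\pi}^{\pi}\frac{\omega(|e^{it}-1|)}{|e^{it}-r|^{2}}\,dt,\qquad r=|z|.
\]

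\textbf{Step 2 (Estimating $\mathcal{I}(r)$).} Using $|e^{it}-1|\asymp|t|$ and $|e^{it}-r|^{2}\asymp(1-r)^{2}+t^{2}$ on $|t|\leq\pi$ (with $r$ bounded away from $0$; the complementary case is trivial since $P[\varphi]$ is smooth there), I split the integral at $t=1-r$. On $[0,1-r]$ the denominator is comparable to $(1-r)^{2}$ while $\omega(t)\leq\omega(1-r)$ by monotonicity, contributing $O(\omega(1-r)/(1-r))$. On $[1-r,\pi]$ the denominator is comparable to $t^{2}$, and the slow condition (\ref{eq3x}) with $\delta=1-r$ yields
\[
\int_{1-r}^{\infty}\frac{\omega(t)}{t^{2}}\,dt\leq M\,\frac{\omega(1-r)}{1-r}.
\]
Summing the two pieces gives the required bound on $\mathcal{I}(r)$, hence on $|\nabla P[\varphi](z)|$.

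\textbf{Step 3 (From derivative to Lipschitz, plus boundary behavior).} Since a regular majorant is in particular fast, $\mathbb{D}$ is a $\Lambda_{\omega}$-extension domain. Given $z_{1},z_{2}\in\mathbb{D}$, pick a rectifiable curve $\gamma\subset\mathbb{D}$ joining them with
\[
\int_{\gamma}\frac{\omega(d_{\mathbb{D}}(\zeta))}{d_{\mathbb{D}}(\zeta)}\,ds(\zeta)\leq M\omega(|z_{1}-z_{2}|).
\]
Combining this with Step 2 and $d_{\mathbb{D}}(\zeta)=1-|\zeta|$, we obtain
\[
|P[\varphi](z_{1})-P[\varphi](z_{2})|\leq\int_{\gamma}|\nabla P[\varphi](\zeta)|\,ds(\zeta)\leq M\omega(|z_{1}-z_{2}|).
\]
Continuity of $\varphi$ on $\mathbb{T}$ forces $P[\varphi]$ to extend continuously to $\overline{\mathbb{D}}$, so the Lipschitz estimate extends to $\overline{\mathbb{D}}$.

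\textbf{Main obstacle.} The delicate point is the integral estimate of Step 2: both the monotonicity of $\omega$ (controlling the piece near the singularity $t=0$) and the slow condition (\ref{eq3x}) (controlling the tail) are essential, and dropping either yields only a weaker bound. The passage from a pointwise derivative bound to a global Lipschitz estimate in Step 3 is then powered by the $\Lambda_{\omega}$-extension property of $\mathbb{D}$, itself resting on the fast condition (\ref{eq2x}); so regularity of $\omega$ (both conditions together) is exactly the hypothesis that makes both halves of the argument go through.
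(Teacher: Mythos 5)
Your argument is correct, and it proves Theorem E as stated; but it is not the route the paper takes. The paper never proves Theorem E directly: it derives it from Theorem \ref{thm-5.0} (see also Corollary \ref{cor-0.11}), whose proof of $(\mathscr{C}_{2})\Rightarrow(\mathscr{C}_{1})$ is derivative-free. There, for $\max\{|z_1|,|z_2|\}\le 1/2$ one estimates the difference of Poisson kernels directly, and for $\max\{|z_1|,|z_2|\}>1/2$ one decomposes the difference $P[\varphi](z_1)-P[\varphi](z_2)$ into an angular part (controlled by showing the dilates $F_r=P[\varphi](r\cdot)$ lie in $\Lambda_{\omega,p}(\mathbb{T})$ uniformly in $r$) and a radial part (controlled by Lemma \ref{lem-4.0}, which bounds $|\varphi(\widetilde z)-P[\varphi](z)|$ by $M\omega(d_{\mathbb{D}}(z))$ using only the tail condition $\delta\int_{\delta}^{\pi}\omega(t)t^{-2}dt\le M\omega(\delta)$). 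The payoff of that decomposition is precisely that the fast condition \eqref{eq2x} is never invoked: the slow-type condition alone suffices, and for $p=\infty$ is even necessary, which is the whole point of Theorem \ref{thm-5.0} as an improvement of Theorem E. Your proof, by contrast, is the classical Hardy--Littlewood scheme: a gradient bound $\mathscr{M}_{P[\varphi]}(z)\le M\omega(1-|z|)/(1-|z|)$ obtained from the tail condition \eqref{eq3x}, followed by integration along the curves furnished by the $\Lambda_{\omega}$-extension property of $\mathbb{D}$, which genuinely needs \eqref{eq2x}. This is shorter and perfectly adequate for a regular majorant (and all your estimates check out: the kernel bound $|\nabla_z\mathbf{P}(z,e^{i\tau})|\le C|e^{i\tau}-z|^{-2}$, the split of $\mathcal{I}(r)$ at $t=1-r$, and the passage to $\overline{\mathbb{D}}$ by continuity), but it cannot yield the sharper statement that the slow-type condition alone implies the conclusion --- as your own closing remark correctly acknowledges.
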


Replacing a regular majorant by a weaker one,  we use some new techniques to improve and extend
 Theorem E  into the following form.

 \begin{Thm}\label{thm-5.0}
Let $\omega$ be a majorant, and let $p\in[1,\infty]$ be a constant. 



\begin{enumerate}
\item[{\rm ($\mathscr{C}_{1}$)}] If $\varphi\in\Lambda_{\omega,p}(\mathbb{T})$ and
$\varphi$ is continuous on $\mathbb{T}$, then $P[\varphi]\in\Lambda_{\omega,p}(\overline{\mathbb{D}})$;
\item[{\rm ($\mathscr{C}_{2}$)}] There is a positive constant $M$ such that for all $\delta\in[0,\pi]$,
$$\delta\int_{\delta}^{\pi}\frac{\omega(t)}{t^{2}}dt\leq\,M\omega(\delta).$$
\end{enumerate}
Then $(\mathscr{C}_{2})\Rightarrow(\mathscr{C}_{1})$ for $p\in[1,\infty]$, and $(\mathscr{C}_{1})\Leftrightarrow(\mathscr{C}_{2})$ for $p=\infty$.
\end{Thm}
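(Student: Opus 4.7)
\emph{Forward direction, Step 1 (gradient estimate).} The plan is to establish the $L^p$-averaged gradient estimate
\[
\left(\int_0^{2\pi}\mathscr{M}_{P[\varphi]}(e^{i\eta}z)^p\,d\eta\right)^{1/p}\leq M\,\frac{\omega(1-|z|)}{1-|z|}.
\]
Writing $u=P[\varphi]$, I would start from $\nabla u(w)=(2\pi)^{-1}\int_{-\pi}^{\pi}(\varphi(e^{it})-c)\nabla_w\mathbf{P}(w,e^{it})\,dt$, valid for any constant $c$ since $\int\nabla_w\mathbf{P}(w,e^{it})\,dt=0$. Setting $w=e^{i\eta}z$, changing variables $t=\eta+s$, and choosing $c=\varphi(e^{i\eta})$, then taking the $L^p$ norm in $\eta$ and applying Minkowski's integral inequality, the hypothesis $\varphi\in\Lambda_{\omega,p}(\mathbb{T})$ gives $\|\varphi(e^{i\eta}e^{is})-\varphi(e^{i\eta})\|_{L^p(d\eta)}\leq M\omega(|s|)$. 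After a rotation reducing to $z=r=|z|$, the remaining scalar factor becomes essentially $\int_0^\pi\omega(s)/((1-r)^2+s^2)\,ds$. Splitting at $s=1-r$, the left piece is controlled using monotonicity of $\omega(t)/t$ and the right piece using precisely $(\mathscr{C}_{2})$; both halves contribute $O(\omega(1-r)/(1-r))$.

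\emph{Step 2 (from gradient to Lipschitz on $\overline{\mathbb{D}}$).} With the gradient estimate in hand, I would bound $\mathcal{L}_p[u](z_1,z_2)$ by case analysis. In the local case $|z_1-z_2|\leq d_{\mathbb{D}}(z_1)/2$, integrating $\nabla u$ along the segment joining $z_1$ and $z_2$ inside the $L^p(d\eta)$ norm (via Minkowski), using $d_{\mathbb{D}}\geq d_{\mathbb{D}}(z_1)/2$ throughout the path and monotonicity of $\omega(t)/t$, yields the bound $M\omega(|z_1-z_2|)$. In the non-local case I project to the boundary, $z_i^*=z_i/|z_i|$, and split
\[
\mathcal{L}_p[u](z_1,z_2)\leq \mathcal{L}_p[u](z_1,z_1^*)+\mathcal{L}_p[\varphi](z_1^*,z_2^*)+\mathcal{L}_p[u](z_2^*,z_2).
\]
The middle term is controlled by the hypothesis on $\varphi$ together with $|z_1^*-z_2^*|\leq 5|z_1-z_2|$. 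Each boundary-to-interior term $\mathcal{L}_p[u](z_i,z_i^*)$ I would handle by a similar Minkowski-plus-Poisson-splitting argument, establishing $\mathcal{L}_p[u](z,z/|z|)\leq M\omega(1-|z|)$, which combined with $1-|z_i|\leq 2|z_1-z_2|$ closes this case.

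\emph{Converse $(\mathscr{C}_{1})\Rightarrow(\mathscr{C}_{2})$ for $p=\infty$.} Arguing by contraposition, I would use the explicit test function $\varphi(e^{it}):=\omega(|t|)$ for $t\in[-\pi,\pi]$. The subadditivity of $\omega$ (a consequence of $\omega(t)/t$ being non-increasing) implies $\varphi\in\Lambda_{\omega,\infty}(\mathbb{T})$. A direct lower bound on the Poisson integral at $r=1-\delta$ yields $P[\varphi](1-\delta)\geq c\,\delta\int_{\delta}^{\pi}\omega(t)/t^2\,dt$. Since $\varphi(1)=\omega(0)=0$, property $(\mathscr{C}_{1})$ would force $P[\varphi](1-\delta)\leq M\omega(\delta)$, which is exactly the inequality in $(\mathscr{C}_{2})$.

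\emph{Main obstacle.} Because $\omega$ is only assumed to be a majorant (not necessarily fast), $\mathbb{D}$ need not be a $\Lambda_{\omega}$-extension domain and Theorem~\ref{thm-1.0} is not applicable; the passage from a pointwise gradient bound to a global Lipschitz bound cannot be outsourced. The main technical work is therefore the non-local case in Step~2, where the boundary-to-interior estimate requires a second application of $(\mathscr{C}_{2})$ and some care in the Minkowski bookkeeping to keep the bounds uniform in the rotation parameter $\eta$.
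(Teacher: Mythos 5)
Your proposal is correct, and your converse direction coincides with the paper's (same sawtooth test function $\varphi(e^{it})=\omega(|t|)$, same lower bound for $P[\varphi](1-\delta)$ against $\delta\int_{\delta}^{\pi}\omega(t)t^{-2}dt$), but your forward direction is assembled along a genuinely different route. Both arguments rest on the same key estimate --- the boundary-to-interior bound $\mathcal{L}_p[P[\varphi]]\left(z,z/|z|\right)\leq M\omega(1-|z|)$, obtained by Minkowski's integral inequality and splitting the Poisson integral at $|\tau-\theta|=1-|z|$, with $(\mathscr{C}_{2})$ controlling the tail; this is precisely Lemma \ref{lem-4.0} of the paper. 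The difference lies in the reduction to that lemma. You first prove an $L^p$-averaged gradient estimate and then run the classical local/non-local dichotomy: path integration of $\mathscr{M}_{P[\varphi]}$ when $|z_1-z_2|\leq d_{\mathbb{D}}(z_1)/2$, and projection of both points to the boundary otherwise. The paper instead splits according to whether $\max\{|z_1|,|z_2|\}\leq 1/2$: in the near-center case it bounds the difference of Poisson kernels directly (using $\mathscr{M}_{\mathbf{P}}\leq 8$ there), and otherwise it moves angularly at the larger radius $r_1$ and then radially, exploiting that the dilates $F_r=P[\varphi](r\,\cdot)$ restricted to $\mathbb{T}$ lie in $\Lambda_{\omega,p}(\mathbb{T})$ with norm uniform in $r$ (a one-line consequence of Minkowski) and applying Lemma \ref{lem-4.0} to $F_{r_1}|_{\mathbb{T}}$ at the point $(r_2/r_1)e^{i\theta_2}$ for the radial step. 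Your route costs a second application of $(\mathscr{C}_{2})$ (inside the gradient estimate) and a small amount of care when $z_i=0$, where $z_i/|z_i|$ is undefined, but it produces the derivative characterization $\left(\int_0^{2\pi}\mathscr{M}_{P[\varphi]}(e^{i\eta}z)^p\,d\eta\right)^{1/p}\leq M\omega(1-|z|)/(1-|z|)$ as a by-product; the paper's route avoids derivatives of $P[\varphi]$ entirely and keeps every step at the level of $L^p$-moduli of continuity. Your closing observation --- that Theorem \ref{thm-1.0} cannot be invoked because $\omega$ need not be fast, so the passage from the gradient bound to the global Lipschitz bound must be carried out by hand --- is exactly the right caution, and your Step 2 discharges that burden correctly.
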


 In particular, if $\omega$ is a slow majorant,
then the following result easily follows from Theorem \ref{thm-5.0}.

\begin{Cor}\label{cor-0.11}
Let $\omega$ be a slow majorant, and let $p\in[1,\infty]$ be a constant.
If $\varphi\in\Lambda_{\omega,p}(\mathbb{T})$
and $\varphi$ is continuous on $\mathbb{T}$, then $P[\varphi]\in\Lambda_{\omega,p}(\overline{\mathbb{D}})$.
\end{Cor}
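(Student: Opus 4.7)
The plan is to reduce Corollary \ref{cor-0.11} directly to Theorem \ref{thm-5.0}. Since the implication $(\mathscr{C}_{2})\Rightarrow(\mathscr{C}_{1})$ in Theorem \ref{thm-5.0} is already established for every $p\in[1,\infty]$, it suffices to verify that an arbitrary slow majorant $\omega$ satisfies $(\mathscr{C}_{2})$, namely that there is a positive constant $M$ with
$$\delta\int_{\delta}^{\pi}\frac{\omega(t)}{t^{2}}\,dt\leq M\omega(\delta), \qquad \delta\in[0,\pi].$$

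First I would treat the regime $\delta\in(0,\min\{\delta_{0},\pi\}]$, where $\delta_{0}$ is the threshold appearing in the slow condition (\ref{eq3x}). Since the integrand is non-negative, enlarging the domain of integration only increases the integral, so
$$\delta\int_{\delta}^{\pi}\frac{\omega(t)}{t^{2}}\,dt\leq \delta\int_{\delta}^{\infty}\frac{\omega(t)}{t^{2}}\,dt\leq M\omega(\delta)$$
by the defining inequality (\ref{eq3x}) of a slow majorant. The endpoint $\delta=0$ gives the trivial inequality $0\leq 0$.

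Next I would handle the (only possibly nonempty) regime $\delta\in[\delta_{0},\pi]$. Here I would exploit monotonicity: since $\omega$ is non-decreasing and $t\leq \pi$, one has $\omega(t)\leq \omega(\pi)$ throughout the integration, which yields
$$\delta\int_{\delta}^{\pi}\frac{\omega(t)}{t^{2}}\,dt\leq \delta\,\omega(\pi)\left(\frac{1}{\delta}-\frac{1}{\pi}\right)\leq \omega(\pi).$$
Because $\omega(\delta)\geq \omega(\delta_{0})>0$ on this subinterval, this bound is at most $\frac{\omega(\pi)}{\omega(\delta_{0})}\,\omega(\delta)$, which is of the required form. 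Combining the two regimes produces a single constant $M'$ for which $(\mathscr{C}_{2})$ holds on all of $[0,\pi]$, and the conclusion $P[\varphi]\in\Lambda_{\omega,p}(\overline{\mathbb{D}})$ follows from $(\mathscr{C}_{2})\Rightarrow(\mathscr{C}_{1})$ in Theorem \ref{thm-5.0}.

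There is no genuine obstacle in this argument; the only point requiring minor care is that the slow condition (\ref{eq3x}) is formulated only for $\delta$ below some threshold $\delta_{0}$, whereas $(\mathscr{C}_{2})$ is demanded on the full interval $[0,\pi]$. This is precisely what the second regime above remedies, using nothing beyond the monotonicity and positivity of $\omega$.
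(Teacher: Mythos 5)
Your argument is correct and is exactly the route the paper intends: the paper simply asserts that the corollary ``easily follows from Theorem \ref{thm-5.0}'', and your verification that a slow majorant satisfies $(\mathscr{C}_{2})$ — using (\ref{eq3x}) for small $\delta$ and the monotonicity bound $\omega(t)\leq\omega(\pi)$ together with $\omega(\delta)\geq\omega(\delta_{0})>0$ for $\delta\in[\delta_{0},\pi]$ — is precisely the routine check being left to the reader. No further comment is needed.
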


By using Theorem \ref{thm-5.0}, we improve and generalize  Theorem  B into the following form.

 \begin{Thm}\label{t-3}
Suppose that $p\in[1,\infty]$ is a constant, $\omega$ is a regular majorant and $f\in\mathscr{H}(\mathbb{D})$ is continuous up to the boundary $\mathbb{T}$ of $\mathbb{D}$.
If the boundary function of $|f|\in\Lambda_{\omega,p}(\mathbb{T})$ and $f$ is
$K$-quasiregular in $\mathbb{D}$, where $K\in[1,\infty)$ is a constant,
then the following statements are equivalent.

\begin{enumerate}
\item[{\rm ($\mathscr{D}_{1}$)}] $f\in\Lambda_{\omega,p}(\mathbb{D})$;
\item[{\rm ($\mathscr{D}_{2}$)}] There is a positive constant $M$ such that
 $$
 \left\{
 \begin{array}{ll}
 \left(\int_{0}^{2\pi}\left(P[|f|](ze^{i\eta})-|f(ze^{i\eta})|\right)^{p}d\eta\right)^{\frac{1}{p}}
\leq M\omega(d_{\mathbb{D}}(z)), &p\in[1,\infty),
\\
P[|f|](z)-|f(z)|\leq M\omega(d_{\mathbb{D}}(z)), &p=\infty.
\end{array}
\right.
$$
\end{enumerate}
\end{Thm}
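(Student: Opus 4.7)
\emph{Strategy.} I prove the two directions separately, relying on Theorem~\ref{thm-1.0} (characterization of $\Lambda_{\omega,p}$-membership via gradient averages and via $|f|$) and Theorem~\ref{thm-5.0} (Hardy--Littlewood).

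\emph{Proof of $(\mathscr{D}_1)\Rightarrow(\mathscr{D}_2)$.} From $||f(z_1)|-|f(z_2)||\le|f(z_1)-f(z_2)|$ one sees that $|f|\in\Lambda_{\omega,p}(\mathbb{D})$, extending to $\overline{\mathbb{D}}$ by the continuity of $f$. Since $|f||_{\mathbb{T}}\in\Lambda_{\omega,p}(\mathbb{T})$ and $\omega$ is regular, hence slow, Theorem~\ref{thm-5.0} yields $P[|f|]\in\Lambda_{\omega,p}(\overline{\mathbb{D}})$. For $z=re^{i\phi}\in\mathbb{D}$, setting $\zeta=e^{i\phi}$ and using $P[|f|]=|f|$ on $\mathbb{T}$, one splits
$$P[|f|](ze^{i\eta})-|f(ze^{i\eta})|=\bigl[P[|f|](ze^{i\eta})-P[|f|](\zeta e^{i\eta})\bigr]-\bigl[|f(ze^{i\eta})|-|f(\zeta e^{i\eta})|\bigr],$$
takes the $L^p(d\eta)$-norm, and applies the $\Lambda_{\omega,p}$-Lipschitz bounds for $P[|f|]$ and $|f|$, noting $|z-\zeta|=d_{\mathbb{D}}(z)$.

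\emph{Proof of $(\mathscr{D}_2)\Rightarrow(\mathscr{D}_1)$.} The plan is to verify condition $(\mathscr{A}_1)$ of Theorem~\ref{thm-1.0}, i.e.\ the $L^p$-gradient bound $\bigl(\int_0^{2\pi}\mathscr{M}_f(ze^{i\eta})^p\,d\eta\bigr)^{1/p}\le M\,\omega(d_{\mathbb{D}}(z))/d_{\mathbb{D}}(z)$, which via $(\mathscr{A}_2)$ yields $f\in\Lambda_{\omega,p}(\mathbb{D})$. Set $u:=P[|f|]-|f|\ge0$, which vanishes on $\mathbb{T}$ by subharmonicity of $|f|$. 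The crucial pointwise inequality, obtained by direct differentiation and the $K$-quasiregularity $|f_z|-|f_{\bar z}|\ge\mathscr{M}_f/K$, is
$$\Delta|f|(z)\ge\frac{(|f_z(z)|-|f_{\bar z}(z)|)^2}{|f(z)|}\ge\frac{\mathscr{M}_f(z)^2}{K^2|f(z)|}\qquad(f(z)\ne 0).$$
Feeding this into the Riesz representation $u(z_0)=\tfrac{1}{2\pi}\int_{\mathbb{D}}G(z_0,w)\Delta|f|(w)\,dA(w)$ and combining with Green's-identity mean-value estimates on $\mathbb{D}(z_0,d_{\mathbb{D}}(z_0)/2)$, and exploiting the subharmonicity of $\mathscr{M}_f^2$ (since $f_z$ is holomorphic and $f_{\bar z}$ antiholomorphic, making $|f_z|^2$, $|f_{\bar z}|^2$ and $|f_z|\cdot|f_{\bar z}|=\exp(\log|f_z|+\log|f_{\bar z}|)$ all subharmonic), one obtains pointwise control of $\mathscr{M}_f(z_0)^2 d_{\mathbb{D}}(z_0)^2$ by an integral of $u$ near $z_0$. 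Passing to $L^p$ via rotational invariance and H\"older's inequality, together with the bound on $\|u(ze^{i\cdot})\|_{L^p}$ from $(\mathscr{D}_2)$, produces $(\mathscr{A}_1)$.

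\emph{Main obstacle.} The most delicate point is the passage from the natural quadratic estimate $\mathscr{M}_f(z)^2 d_{\mathbb{D}}(z)^2\le M\|f\|_\infty u(z)$ to the linear bound that $(\mathscr{A}_1)$ requires: a na\"ive square-rooting yields only $\sqrt{\omega(d_{\mathbb{D}}(z))}$ on the right. Bridging this gap relies on a refined exploitation of $|f|\le P[|f|]$, the Hardy--Littlewood property of $P[|f|]$ established via Theorem~\ref{thm-5.0}, and the fast-majorant condition on $\omega$, effectively upgrading the quadratic estimate by a bootstrap.
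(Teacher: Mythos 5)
Your first direction is essentially the paper's argument: both decompose $P[|f|](ze^{i\eta})-|f(ze^{i\eta})|$ through the boundary point $\widetilde z=z/|z|$ and apply the Lipschitz bounds for $P[|f|]$ (via Theorem \ref{thm-5.0}) and for $|f|$. That part is fine.

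The second direction, however, has a genuine gap, and you have put your finger on it yourself without closing it. Your route through $\Delta|f|\ge(|f_z|-|f_{\bar z}|)^2/|f|\ge\mathscr{M}_f^2/(K^2|f|)$ and the Riesz/Green representation of $u=P[|f|]-|f|$ inevitably produces a \emph{quadratic} estimate $d_{\mathbb{D}}(z)^2\,\mathscr{M}_f(z)^2\lesssim \|f\|_\infty\,u(z)$, and square-rooting the hypothesis $(\mathscr{D}_2)$ only yields $d_{\mathbb{D}}(z)\mathscr{M}_f(z)\lesssim\sqrt{\omega(d_{\mathbb{D}}(z))}$, which is strictly weaker than the bound $(\mathscr{A}_1)$ requires. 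The ``bootstrap'' you invoke to upgrade this is not described in any checkable form, and it is precisely the whole difficulty: the quadratic mechanism is the right one for Dyakonov's Garsia-type quantity $P[|f|^2]-|f|^2$ (Theorem \ref{thm-8}), where the right-hand side is genuinely of size $\omega^2$, but it cannot be expected to recover a linear bound from the linear quantity $P[|f|]-|f|$. The paper avoids this entirely by using a \emph{linear} Schwarz-type lemma for harmonic $K$-quasiregular maps (Lemmas \ref{mate-07}, \ref{mate-07b} and \ref{mate-07d}): $\mathscr{M}_f(z)\le\frac{2K}{\delta}\bigl(\sup_{|w-z|<\delta}|f(w)|-|f(z)|\bigr)$, hence an $L^p$ bound of $\mathscr{M}_f(z)$ by the $L^p$ mean of $\{|f(w)|-|f(z)|\}_+$ over $\mathbb{D}(z,\delta)$. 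Combining this with the chain $\{|f(w)|-|f(z)|\}_+\le\{P[|f|](w)-P[|f|](z)\}_+ +\{P[|f|](z)-|f(z)|\}_+$, where the first term is controlled by Theorem \ref{thm-5.0} and the second by $(\mathscr{D}_2)$, gives the linear bound $(\mathscr{A}_1)$ directly, with no squaring and no loss. To repair your proof you should replace the Laplacian/Green's-function step by such a first-order estimate; as written, the argument does not reach $(\mathscr{D}_1)$.
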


 \begin{Rem}
We remark that $|f|^{\lambda}$ is subharmonic, and therefore the Poisson integral, $P[|f|^{\lambda}]$,
of the boundary function of $|f|^{\lambda}$, is equal to the smallest harmonic majorant of $|f|^{\lambda}$
in $\mathbb{D}$, where $K\in[1,\infty)$, $\lambda\in\left[1-1/K^{2}, \infty\right)$ and $f\in\mathscr{H}(\mathbb{D})$ is a
$K$-quasiregular mapping $($see \cite{KP-08}$)$. In particular, $P[|f|^{\lambda}]-|f|^{\lambda}\geq0$ in $\mathbb{D}$, which implies that
$P[|f|]-|f|\geq0$ in $\mathbb{D}$ holds in Theorem \ref{t-3}.
\end{Rem}

From Theorem \ref{t-3}, we obtain the following result.

\begin{Cor}\label{yy-cor}
Suppose that $p\in[1,\infty]$ is a constant, $\omega$ is a regular majorant and $f\in\mathscr{A}(\mathbb{D})$.
If the boundary function of $|f|\in\Lambda_{\omega,p}(\mathbb{T})$, then the following statements are equivalent.

\begin{enumerate}
\item[{\rm ($\mathscr{E}_{1}$)}] $f\in\Lambda_{\omega,p}(\mathbb{D})$;
\item[{\rm ($\mathscr{E}_{2}$)}] There is a positive constant $M$ such that
$$
 \left\{
 \begin{array}{ll}
 \left(\int_{0}^{2\pi}\left(P[|f|](ze^{i\eta})-|f(ze^{i\eta})|\right)^{p}d\eta\right)^{\frac{1}{p}}
\leq M\omega(d_{\mathbb{D}}(z)), &p\in[1,\infty),
\\
P[|f|](z)-|f(z)|\leq M\omega(d_{\mathbb{D}}(z)), &p=\infty.
\end{array}
\right.
$$
\end{enumerate}
\end{Cor}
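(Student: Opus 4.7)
The plan is to deduce Corollary \ref{yy-cor} directly from Theorem \ref{t-3} by observing that every $f\in\mathscr{A}(\mathbb{D})$ can be treated as a $K$-quasiregular complex-valued harmonic function with $K=1$. First I would note the structural compatibility of the two statements: $\mathscr{A}(\mathbb{D})\subset\mathscr{H}(\mathbb{D})$, every element of the disk algebra is continuous up to $\mathbb{T}$ by definition, and the hypothesis ``the boundary function of $|f|\in\Lambda_{\omega,p}(\mathbb{T})$'' is verbatim the same in the two statements, as is the Poisson-integral inequality describing $(\mathscr{D}_{2})$ and $(\mathscr{E}_{2})$. Consequently the entire content of the corollary is the observation that the quasiregularity hypothesis of Theorem \ref{t-3} is automatically satisfied.

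Next I would verify $1$-quasiregularity in two cases. If $f$ is constant, both $(\mathscr{E}_{1})$ and $(\mathscr{E}_{2})$ are trivially true because the relevant Lipschitz seminorm and the gap $P[|f|]-|f|$ vanish identically, so there is nothing to prove. If $f$ is non-constant analytic, then $f_{\overline{z}}\equiv 0$ and $f_{z}=f'$, the zero set of $f'$ is a discrete subset of $\mathbb{D}$ and hence of planar measure zero, so $J_{f}=|f'|^{2}>0$ a.e.\ in $\mathbb{D}$. Moreover $\mathscr{M}_{f}^{2}=|f'|^{2}=J_{f}$ a.e., which verifies the distortion inequality with constant $K=1$; the $ACL^{2}$ regularity is automatic for holomorphic functions on $\mathbb{D}$.

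Having checked these hypotheses, I would then simply invoke Theorem \ref{t-3} with $K=1$ and read off the equivalence $(\mathscr{D}_{1})\Leftrightarrow(\mathscr{D}_{2})$ as $(\mathscr{E}_{1})\Leftrightarrow(\mathscr{E}_{2})$. I do not foresee any genuine obstacle: the whole reduction is a matter of fitting analytic functions into the (strictly more general) quasiregular-harmonic framework. The only subtlety worth flagging in the write-up is that the definition of $K$-quasiregularity requires $J_{f}>0$ a.e., which fails for constant $f$; this is why the constant case has to be dispatched separately, even though it is completely trivial.
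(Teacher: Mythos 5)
Your proposal is correct and matches the paper's intent exactly: the paper derives Corollary \ref{yy-cor} simply by specializing Theorem \ref{t-3} to analytic $f$, which is $1$-quasiregular. Your separate (and trivial) treatment of the constant case, where $J_{f}>0$ a.e.\ fails, is a reasonable extra precaution that the paper leaves implicit.
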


 Furthermore, if we replace $|f|$ by $|f|^{2}$  in  ($\mathscr{E}_{2}$) of Corollary \ref{yy-cor},  then we obtain the following result which is an extension of Theorem C.

\begin{Thm}\label{thm-8}
If $f\in\mathscr{A}(\mathbb{D})$ and if both $\omega$ and $\omega^{2}$ are regular majorants, then
$f\in\Lambda_{\omega,p}(\overline{\mathbb{D}})$ if and only if
there is a positive constant $M$
such that

\beqq\label{df-1-1}
 \left\{
 \begin{array}{ll}
 \left(\int_{0}^{2\pi}\left(P[|f|^{2}](ze^{i\eta})-|f(ze^{i\eta})|^{2}\right)^{\frac{p}{2}}d\eta\right)^{\frac{1}{p}}
\leq M\omega(d_{\mathbb{D}}(z)), &p\in[2,\infty),
\\
\left(P[|f|^{2}](z)-|f(z)|^{2}\right)^{\frac{1}{2}}\leq M\omega(d_{\mathbb{D}}(z)), &p=\infty.
\end{array}
\right.
\eeqq
 \end{Thm}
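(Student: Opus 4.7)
The plan is to route both implications through condition $(\mathscr{B}_{1})$ of Corollary~\ref{An-th-0.1}, which characterises $f\in\Lambda_{\omega,p}(\overline{\mathbb{D}})$ via $L^{p}$-averages of $|f'|$ on circles, and to link $|f'|$ to $P[|f|^{2}]-|f|^{2}$ through the Littlewood--Paley/Green identity
$$P[|f|^{2}](z)-|f(z)|^{2}=\frac{2}{\pi}\int_{\mathbb{D}}G(z,\zeta)|f'(\zeta)|^{2}\,dA(\zeta),\qquad f\in\mathscr{A}(\mathbb{D}),$$
where $G(z,\zeta)=\log|(1-\bar z\zeta)/(z-\zeta)|$ is the Green function of $\mathbb{D}$. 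The rotation invariance $G(ze^{i\eta},\zeta)=G(z,e^{-i\eta}\zeta)$ combined with the substitution $\zeta=e^{i\eta}\xi$ rewrites the identity as
$$(P[|f|^{2}]-|f|^{2})(ze^{i\eta})=\frac{2}{\pi}\int_{\mathbb{D}}G(z,\xi)|f'(e^{i\eta}\xi)|^{2}\,dA(\xi),$$
which neatly decouples the rotation parameter from the Green weight.

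For the forward direction, assume $f\in\Lambda_{\omega,p}(\overline{\mathbb{D}})$, so $(\mathscr{B}_{1})$ yields $\left(\int_{0}^{2\pi}|f'(e^{i\eta}\xi)|^{p}\,d\eta\right)^{1/p}\le M\omega(d_{\mathbb{D}}(\xi))/d_{\mathbb{D}}(\xi)$ for every $\xi\in\mathbb{D}$. Minkowski's integral inequality applied with exponent $p/2\ge 1$ to the rewritten identity gives
$$\left(\int_{0}^{2\pi}(P[|f|^{2}]-|f|^{2})^{p/2}(ze^{i\eta})\,d\eta\right)^{2/p}\le\frac{2M^{2}}{\pi}\int_{\mathbb{D}}G(z,\xi)\,\frac{\omega^{2}(d_{\mathbb{D}}(\xi))}{d_{\mathbb{D}}(\xi)^{2}}\,dA(\xi),$$
reducing the task to the Green potential estimate
$$\int_{\mathbb{D}}G(z,\xi)\,\frac{\omega^{2}(d_{\mathbb{D}}(\xi))}{d_{\mathbb{D}}(\xi)^{2}}\,dA(\xi)\le C\omega^{2}(d_{\mathbb{D}}(z)).$$
I would prove this by splitting $\mathbb{D}$ at $\mathbb{D}(z,d_{\mathbb{D}}(z)/2)$: on the near piece $d_{\mathbb{D}}(\xi)\asymp d_{\mathbb{D}}(z)$ and $G(z,\xi)\lesssim\log(d_{\mathbb{D}}(z)/|z-\xi|)$, contributing $\asymp\omega^{2}(d_{\mathbb{D}}(z))$; on the far piece $G\lesssim d_{\mathbb{D}}(z)d_{\mathbb{D}}(\xi)/|1-\bar z\xi|^{2}$, and a dyadic, Carleson-type decomposition together with the fast condition \eqref{eq2x} and the slow condition \eqref{eq3x} applied to $\omega^{2}$ controls the remainder. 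The case $p=\infty$ is a direct specialisation (Minkowski is vacuous) and reproduces Theorem~C.

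For the reverse direction, assume the Garsia-type inequality. Restricting the Green identity to $\mathbb{D}(\xi,d_{\mathbb{D}}(\xi)/2)\subset\mathbb{D}$, noting that $G(\xi,\cdot)$ is bounded below by a positive constant on the smaller concentric disk $\mathbb{D}(\xi,d_{\mathbb{D}}(\xi)/4)$, and invoking the sub-mean-value property of the subharmonic function $|f'|^{2}$ yields the pointwise lower bound
$$d_{\mathbb{D}}(\xi)^{2}|f'(\xi)|^{2}\le C\bigl(P[|f|^{2}](\xi)-|f(\xi)|^{2}\bigr),\qquad\xi\in\mathbb{D}.$$
Setting $\xi=ze^{i\eta}$ (so $d_{\mathbb{D}}(\xi)=d_{\mathbb{D}}(z)$), raising to the power $p/2$, and integrating in $\eta$ produces
$$\left(\int_{0}^{2\pi}|f'(ze^{i\eta})|^{p}\,d\eta\right)^{1/p}\le\frac{C^{1/2}}{d_{\mathbb{D}}(z)}\left(\int_{0}^{2\pi}(P[|f|^{2}]-|f|^{2})^{p/2}(ze^{i\eta})\,d\eta\right)^{1/p}\le\frac{C'\omega(d_{\mathbb{D}}(z))}{d_{\mathbb{D}}(z)},$$
which is exactly $(\mathscr{B}_{1})$, and Corollary~\ref{An-th-0.1} then returns $f\in\Lambda_{\omega,p}(\overline{\mathbb{D}})$.

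The main obstacle is the Green potential estimate in the forward direction, where both the fast condition \eqref{eq2x} (controlling the near-diagonal logarithmic singularity of $G$) and the slow condition \eqref{eq3x} (controlling the far-field Poisson-type decay of $G$) must be invoked for the majorant $\omega^{2}$; this is precisely why the hypothesis that both $\omega$ and $\omega^{2}$ are regular majorants is indispensable. A secondary technical concern is that the Minkowski step and the sub-mean-value estimate must be compatible with the rotational $L^{p}$-averaging built into the $\Lambda_{\omega,p}$ norm, which is why formulating matters in terms of $f'(e^{i\eta}\xi)$ rather than $f'(\xi)$ is essential.
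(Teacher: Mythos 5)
Your argument is correct, but it takes a genuinely different route from the paper. The paper never touches the Green function: for the necessity it uses the boundary identity $P[|f|^{2}](z)-|f(z)|^{2}=\frac{1}{2\pi}\int_{0}^{2\pi}|f(e^{i\tau})-f(z)|^{2}\mathbf{P}(z,e^{i\tau})\,d\tau$, bounds the inner differences directly by $\omega(|z-e^{i\tau}|)$ from the Lipschitz hypothesis (i.e.\ it works on the $(\mathscr{B}_{2})$ side rather than the $(\mathscr{B}_{1})$ side), applies Minkowski with exponent $p/2$, and then disposes of $\int_{0}^{2\pi}\mathbf{P}(z,e^{i\tau})\,\omega^{2}(|z-e^{i\tau}|)\,d\tau\lesssim\omega^{2}(d_{\mathbb{D}}(z))$ by citing Dyakonov's Lemma 2 for the regular majorant $\omega^{2}$; for the sufficiency it gets $d_{\mathbb{D}}(z)|f'(ze^{i\eta})|\leq\bigl(P[|f|^{2}](ze^{i\eta})-|f(ze^{i\eta})|^{2}\bigr)^{1/2}$ in one line from the Cauchy integral formula and Cauchy--Schwarz, then invokes Corollary \ref{An-th-0.1} exactly as you do. Your interior (Littlewood--Paley) formulation is a legitimate substitute: the rotation-invariance trick, the Minkowski step with $p/2\geq1$, and the lower bound $d_{\mathbb{D}}(\xi)^{2}|f'(\xi)|^{2}\lesssim P[|f|^{2}](\xi)-|f(\xi)|^{2}$ via the sub-mean-value property of $|f'|^{2}$ on $\mathbb{D}(\xi,d_{\mathbb{D}}(\xi)/4)$ are all sound, and your reverse direction is essentially equivalent in strength to the paper's Cauchy-formula estimate. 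What your route buys is a self-contained interior picture; what it costs is that the Green potential estimate $\int_{\mathbb{D}}G(z,\xi)\,\omega^{2}(d_{\mathbb{D}}(\xi))\,d_{\mathbb{D}}(\xi)^{-2}\,dA(\xi)\lesssim\omega^{2}(d_{\mathbb{D}}(z))$ must be proved from scratch, whereas the paper outsources the analogous Poisson estimate to the literature. Your sketch of that estimate is workable, but one attribution is off: the logarithmic singularity of $G$ on the near piece is integrable against area measure and costs only a factor $d_{\mathbb{D}}(z)^{2}$ with no condition on $\omega$ at all; the fast condition \eqref{eq2x} for $\omega^{2}$ is actually consumed by the far-field portion where $d_{\mathbb{D}}(\xi)<d_{\mathbb{D}}(z)$ (the integral $\int_{0}^{d_{\mathbb{D}}(z)}\omega^{2}(t)t^{-1}\,dt$), while the slow condition \eqref{eq3x} handles $d_{\mathbb{D}}(\xi)>d_{\mathbb{D}}(z)$. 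This is a misstatement of bookkeeping rather than a gap, but it should be corrected if the argument is written out in full.
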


 We remark that if we take $p=\infty$, then  Theorem \ref{thm-8}  coincides with Theorem C.

 \subsection{Applications of equivalent norms and Hardy-Littlewood type theorems}
Let $\phi$ be a analytic function of $\mathbb{D}$
into itself. We define
the composition operator $C_{\phi}$ on $\mathscr{H}(\mathbb{D})$ by $C_{\phi}(f)(z)=f(\phi(z))$ for $z\in\mathbb{D}$. For two sets  $X$,
$Y\subset \mathscr{H}(\mathbb{D})$, the set of all $\phi$ for which $C_{\phi}(X)\subset Y$ is
denoted by $\mathscr{F}(X,Y)$. In \cite{Sha}, Shapiro gave some compete characterizations of compact
composition operators of some analytic function spaces. Recently, the studies of  composition operators on holomorphic and harmonic functions
have been attracted much attention of many mathematicians (see \cite{C-H,CHZ2022MZ,CPR,P-08,Pav-2008,P-R-0,P-R-1,Sha,Z2}).

 The second purpose of this paper is to apply Theorems \ref{thm-1.0} and \ref{thm-5.0} to study
the composition operators between the Lipschitz type spaces. In the following, for $x\in\mathbb{R}$, let $$\{x\}_{+}=\max\{x,0\}.$$
By using Theorems \ref{thm-1.0} and \ref{thm-5.0}, we obtain the following result.

 \begin{Thm}\label{th-3}
Suppose that $p\in[1,\infty)$ is a constant, and $\omega_{1}$ and $\omega_{2}$ are fast majorants.
Let $\phi$ be an analytic function of $\mathbb{D}$ into itself.
Then the following conditions are equivalent.
\begin{enumerate}
\item[{\rm $(\mathscr{F}_{1})$}] $\phi\in\mathscr{F}\big(\Lambda_{\omega_{1},\infty}(\mathbb{D})\cap\mathscr{H}(\mathbb{D}),\Lambda_{\omega_{2},p}(\mathbb{D})\cap\mathscr{H}(\mathbb{D})\big)$;

\item[{\rm $(\mathscr{F}_{2})$}] There is a positive constant $M$ such that, for $z\in\mathbb{D}$,
$$\left(\int_{0}^{2\pi}\left(|\phi'(ze^{i\theta})|
\frac{\omega_{1}\big(d_{\mathbb{D}}(\phi(ze^{i\theta}))\big)}{d_{\mathbb{D}}(\phi(ze^{i\theta}))}\right)^{p}d\theta\right)^{\frac{1}{p}}
\leq M\frac{\omega_{2}\big(d_{\mathbb{D}}(z)\big)}{d_{\mathbb{D}}(z)};$$
\end{enumerate}

Furthermore, assume that
$\phi$ is continuous in $\overline{\mathbb{D}}$,
$\omega_{1}\in\mathscr{S}$ and $\omega_{2}$ is regular, where $\mathscr{S}$ denotes the set consisting of those majorant $\omega_{1}$
for which $\omega_{1}$ is differentiable on $(0,1]$ and $\omega_{1}'$
is also non-increasing on $(0,1]$ with $\sup_{t\in (0,1]}\frac{\omega_1(t)}{t\omega_1'(t)}<\infty$.

Then, the conditions {\rm $(\mathscr{F}_{1})$} and {\rm $(\mathscr{F}_{2})$} are equivalent to the conditions
 $(\mathscr{F}_{3})$$\sim$$(\mathscr{F}_{5})$.
\begin{enumerate}
\item[{\rm $(\mathscr{F}_{3})$}]  $\omega_{1}(d_{\mathbb{D}}(\phi))\in\Lambda_{\omega_{2},p}(\overline{\mathbb{D}});$

\item[{\rm $(\mathscr{F}_{4})$}] $\omega_{1}(d_{\mathbb{D}}(\phi))\in\Lambda_{\omega_{2},p}(\mathbb{T})$ and there is a positive constant $M$
such that, for $r\in(0,1)$,
$$\left(\int_{0}^{2\pi}\left\{\omega_{1}(d_{\mathbb{D}}(\phi(re^{i\theta})))-\omega_{1}(d_{\mathbb{D}}(\phi(e^{i\theta})))\right\}_{+}^{p}d\theta\right)^{\frac{1}{p}}
\leq\,M\omega_{2}(d_{\mathbb{D}}(r));$$
\item[{\rm $(\mathscr{F}_{5})$}]
$\omega_{1}(d_{\mathbb{D}}(\phi))\in\Lambda_{\omega_{2},p}(\mathbb{T})$ and there is a positive constant $M$
such that, for $r\in(0,1)$,
$$\left(\int_{0}^{2\pi}\left(\omega_{1}(d_{\mathbb{D}}(\phi(re^{i\theta})))-P[\omega_{1}(d_{\mathbb{D}}(\phi))](re^{i\theta})\right)^{p}d\theta\right)^{\frac{1}{p}}
\leq\,M\omega_{2}(d_{\mathbb{D}}(r)).$$
\end{enumerate}
\end{Thm}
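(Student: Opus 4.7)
The proof splits into three blocks of implications.

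\emph{Block I} ($(\mathscr{F}_1)\Leftrightarrow(\mathscr{F}_2)$ under the basic hypotheses). Since $\phi$ is analytic, $f\circ\phi$ inherits harmonicity from $f$, and the chain rule yields $\mathscr{M}_{f\circ\phi}(z)=|\phi'(z)|\,\mathscr{M}_f(\phi(z))$. I would then apply Theorem~\ref{thm-1.0} twice: with exponent $\infty$ to characterize $f\in\Lambda_{\omega_1,\infty}(\mathbb{D})\cap\mathscr{H}(\mathbb{D})$ by $\mathscr{M}_f(w)\le M\,\omega_1(d_\mathbb{D}(w))/d_\mathbb{D}(w)$, and with exponent $p$ to characterize $f\circ\phi\in\Lambda_{\omega_2,p}(\mathbb{D})\cap\mathscr{H}(\mathbb{D})$ by the integrated bound of $(\mathscr{F}_2)$. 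The forward implication $(\mathscr{F}_2)\Rightarrow(\mathscr{F}_1)$ is then direct substitution. For $(\mathscr{F}_1)\Rightarrow(\mathscr{F}_2)$ I would plug in an extremal analytic test function $f_0\in\Lambda_{\omega_1,\infty}(\mathbb{D})$ saturating the gradient bound, i.e.\ $\mathscr{M}_{f_0}(w)\asymp\omega_1(d_\mathbb{D}(w))/d_\mathbb{D}(w)$ on $\mathbb{D}$; such an $f_0$ can be produced by a lacunary series, using the fastness of $\omega_1$.

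\emph{Block II} ($(\mathscr{F}_2)\Leftrightarrow(\mathscr{F}_3)$). The $\mathscr{S}$-condition on $\omega_1$ delivers $\omega_1(t)\asymp t\,\omega_1'(t)$. Setting $g(z):=\omega_1(1-|\phi(z)|)$, a direct computation gives $\mathscr{M}_g(z)=\omega_1'(1-|\phi(z)|)\,|\phi'(z)|\asymp|\phi'(z)|\,\omega_1(d_\mathbb{D}(\phi(z)))/d_\mathbb{D}(\phi(z))$, so $(\mathscr{F}_2)$ transcribes to the integrated gradient bound $\bigl(\int_0^{2\pi}\mathscr{M}_g(ze^{i\eta})^{p}\,d\eta\bigr)^{1/p}\le M\,\omega_2(d_\mathbb{D}(z))/d_\mathbb{D}(z)$. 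To identify this with $g\in\Lambda_{\omega_2,p}(\overline{\mathbb{D}})$, I would, for the ``gradient-bound to Lipschitz'' direction, integrate $\mathscr{M}_g$ along straight segments of length $\le d_\mathbb{D}(z)/2$ (using Minkowski in the angular variable and the non-increasing ratio $\omega_2(t)/t$ coming from the slow half of regularity) to obtain $g\in\operatorname{loc}\Lambda_{\omega_2,p}(\mathbb{D})$, and then upgrade globally through the $\Lambda_{\omega_2}$-extension property of $\mathbb{D}$ (valid because $\omega_2$ is fast). The converse direction exploits the smoothness of $g$ through a quantitative mean-value inequality at scale $\rho\asymp d_\mathbb{D}(z)$, absorbing $\omega_2(\rho)/\rho$ into $\omega_2(d_\mathbb{D}(z))/d_\mathbb{D}(z)$.

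\emph{Block III} ($(\mathscr{F}_3)\Leftrightarrow(\mathscr{F}_4)\Leftrightarrow(\mathscr{F}_5)$). Since $\omega_1$ is concave nondecreasing and $|\phi|$ is subharmonic, $g=\omega_1(1-|\phi|)$ is superharmonic on $\mathbb{D}$, so $g\ge P[g|_\mathbb{T}]$ there, with $g-P[g|_\mathbb{T}]$ vanishing on $\mathbb{T}$ by continuity of $\phi$ on $\overline{\mathbb{D}}$. Regularity of $\omega_2$ implies condition $(\mathscr{C}_2)$ of Theorem~\ref{thm-5.0}, hence $P[g]\in\Lambda_{\omega_2,p}(\overline{\mathbb{D}})$ whenever $g|_\mathbb{T}\in\Lambda_{\omega_2,p}(\mathbb{T})$. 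The splitting $g=P[g]+(g-P[g])$ reduces $(\mathscr{F}_3)$ to the $\Lambda_{\omega_2,p}$-control of the nonnegative, boundary-vanishing term $g-P[g]$, which is captured precisely by the radial $L^p$ estimate of $(\mathscr{F}_5)$. For $(\mathscr{F}_4)\Leftrightarrow(\mathscr{F}_5)$ one telescopes
$$g(re^{i\theta})-P[g](re^{i\theta})=\bigl(g(re^{i\theta})-g(e^{i\theta})\bigr)+\bigl(g(e^{i\theta})-P[g](re^{i\theta})\bigr),$$
using positivity of the left-hand side and the error estimate $\|P[g](r\cdot)-g(\cdot)\|_{L^p(\mathbb{T})}\le M\,\omega_2(1-r)$ coming from $P[g]\in\Lambda_{\omega_2,p}(\overline{\mathbb{D}})$.

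The most delicate step is the converse direction in Block~II, moving from a global Lipschitz hypothesis on the non-harmonic $C^1$ function $g$ back to an integrated gradient estimate, since the $L^p$-averaging arguments behind Theorem~\ref{thm-1.0} are not directly available for non-harmonic data. The hypothesis $\omega_1\in\mathscr{S}$ is what rescues the scheme: the relation $\omega_1(t)/t\asymp\omega_1'(t)$ translates $\mathscr{M}_g$ into exactly the expression appearing in $(\mathscr{F}_2)$ with controlled constants, and the smoothness of $g$ inherited from $\omega_1$ and $\phi$ makes the required mean-value comparison possible.
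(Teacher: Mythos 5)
Your Blocks II and III follow essentially the same route as the paper (the identity $\mathscr{M}_{\omega_1(d_{\mathbb{D}}(\phi))}=|\phi'|\,\omega_1'(d_{\mathbb{D}}(\phi))$ combined with the two-sided comparison $\omega_1(t)\asymp t\omega_1'(t)$ from $\mathscr{S}$, the extension-domain integration for the sufficiency, a subharmonic mean-value argument for the necessity, superharmonicity of $\omega_1(d_{\mathbb{D}}(\phi))$, and Theorem \ref{thm-5.0} for the Poisson extension). The problem is in Block I.

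The converse implication $(\mathscr{F}_1)\Rightarrow(\mathscr{F}_2)$ cannot be run with a \emph{single} analytic test function $f_0$ satisfying $\mathscr{M}_{f_0}(w)=|f_0'(w)|\asymp\omega_1(d_{\mathbb{D}}(w))/d_{\mathbb{D}}(w)$ on all of $\mathbb{D}$: whenever $\omega_1(t)/t\to\infty$ as $t\to0^+$ (e.g.\ $\omega_1(t)=t^{\alpha}$, $\alpha<1$), no such $f_0$ exists. Indeed, a two-sided bound forces $f_0'$ to be zero-free, and then the mean value property applied to the harmonic function $\log|f_0'|$ gives
\begin{equation*}
\log|f_0'(0)|=\frac{1}{2\pi}\int_0^{2\pi}\log|f_0'(re^{i\theta})|\,d\theta\geq \log c+\log\frac{\omega_1(1-r)}{1-r}\longrightarrow\infty
\end{equation*}
as $r\to1^-$, a contradiction. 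A lacunary series does not help (its derivative has zeros and certainly does not dominate a radial weight from below everywhere). This is exactly the obstruction that the reverse-estimate lemma of Abakumov--Doubtsov (Lemma \ref{lem-0.2} in the paper) is designed to bypass: one gets \emph{two} holomorphic functions $f_1,f_2$ in the growth space with $|f_1'|+|f_2'|\geq\psi(|\cdot|)$, $\psi(t)=\omega_1(1-t)/(1-t)$ doubling, and then — since the source space in $(\mathscr{F}_1)$ consists of \emph{harmonic} functions — one tests with $f=f_1+\overline{f_2}$, for which $\mathscr{M}_f=|f_1'|+|f_2'|$. You also omit the degenerate case $\lim_{t\to0^+}\omega_1(t)/t<\infty$, where $\psi$ is bounded and is not a weight in the Abakumov--Doubtsov sense; there the paper simply tests with $f_0(\xi)=\xi$, using that $\omega_1(t)/t$ is non-increasing and bounded above and below on $(0,2]$. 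With these two repairs Block I closes, and the rest of your outline is sound.
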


 \begin{Rem}
We remark that $$\omega_{1}(d_{\mathbb{D}}(\phi(re^{i\theta})))-P[\omega_{1}(d_{\mathbb{D}}(\phi))](re^{i\theta})\geq0$$ in Theorem \ref{th-3} because
$\omega_{1}(d_{\mathbb{D}}(\phi))$ is superharmonic in $\mathbb{D}$ {\rm (see Lemma \ref{le-9})}. In particular, Theorem \ref{th-3} is also true for $p=\infty$.
 The proof method used to prove Theorem \ref{th-3} for $p\in[1,\infty)$ is still valid for $p=\infty$.
\end{Rem}

For $t\geq0$, let $\omega_{\alpha}(t)=t^{\alpha}$ and $\omega_{\beta}(t)=t^{\beta}$, where $\alpha, \beta\in(0,1]$ are constants.
In particular, if we take $\omega_{1}=\omega_{\alpha}$ and $\omega_{2}=\omega_{\beta}$ in Theorem \ref{th-3}, then
we obtain the following result. 

\begin{Cor}
Suppose that $p\in[1,\infty)$ is a constant and
 $\phi$ is an analytic function of $\mathbb{D}$ into itself.
Then the following conditions are equivalent.
\begin{enumerate}
\item[{\rm $(\mathscr{G}_{1})$}] $\phi\in\mathscr{F}\big(\Lambda_{\omega_{\alpha},\infty}(\mathbb{D})\cap\mathscr{H}(\mathbb{D}),\Lambda_{\omega_{\beta},p}(\mathbb{D})\cap\mathscr{H}(\mathbb{D})\big)$;

\item[{\rm $(\mathscr{G}_{2})$}] There is a positive constant $M$ such that, for $z\in\mathbb{D}$,
$$\left(\int_{0}^{2\pi}\left(|\phi'(ze^{i\theta})|
{\big(d_{\mathbb{D}}(\phi(ze^{i\theta}))\big)^{\alpha-1}}\right)^{p}d\theta\right)^{\frac{1}{p}}
\leq M\big(d_{\mathbb{D}}(z)\big)^{\beta-1};$$
\end{enumerate}

Furthermore, assume that
$\phi$ is continuous in $\overline{\mathbb{D}}$ and $\beta\neq1$.
Then, the conditions {\rm $(\mathscr{G}_{1})$} and {\rm $(\mathscr{G}_{2})$} are equivalent to the conditions
 $(\mathscr{G}_{3})$$\sim$$(\mathscr{G}_{5})$.
\begin{enumerate}
\item[{\rm $(\mathscr{G}_{3})$}]  ${\big(d_{\mathbb{D}}(\phi)\big)^{\alpha}}\in\Lambda_{\omega_{\beta},p}(\overline{\mathbb{D}});$

\item[{\rm $(\mathscr{G}_{4})$}] ${\big(d_{\mathbb{D}}(\phi)\big)^{\alpha}}\in\Lambda_{\omega_{\beta},p}(\mathbb{T})$ and there is a positive constant $M$
such that, for $r\in(0,1)$,
$$\left(\int_{0}^{2\pi}\left\{(d_{\mathbb{D}}(\phi(re^{i\theta})))^{\alpha}-
(d_{\mathbb{D}}(\phi(e^{i\theta})))^{\alpha}\right\}_{+}^{p}d\theta\right)^{\frac{1}{p}}
\leq\,M(d_{\mathbb{D}}(r))^{\beta};$$
\item[{\rm $(\mathscr{G}_{5})$}]
$(d_{\mathbb{D}}(\phi))^{\alpha}\in\Lambda_{\omega_{\beta},p}(\mathbb{T})$ and there is a positive constant $M$
such that, for $r\in(0,1)$,
$$\left(\int_{0}^{2\pi}\left((d_{\mathbb{D}}(\phi(re^{i\theta})))^{\alpha}-P[(d_{\mathbb{D}}(\phi))^{\alpha}](re^{i\theta})\right)^{p}d\theta\right)^{\frac{1}{p}}
\leq\,M(d_{\mathbb{D}}(r))^{\beta}.$$
\end{enumerate}
\end{Cor}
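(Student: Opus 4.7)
The plan is to derive this corollary as a direct specialization of Theorem \ref{th-3} with $\omega_1 = \omega_\alpha$ and $\omega_2 = \omega_\beta$. The proof therefore splits into two tasks: verifying that the power majorants $t\mapsto t^\alpha$ and $t\mapsto t^\beta$ satisfy the structural hypotheses required in each part of Theorem \ref{th-3} within the stated parameter ranges, and then substituting the explicit forms of $\omega_1$ and $\omega_2$ into conditions $(\mathscr{F}_1)$--$(\mathscr{F}_5)$ to obtain $(\mathscr{G}_1)$--$(\mathscr{G}_5)$.

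For the first equivalence $(\mathscr{G}_1)\Leftrightarrow(\mathscr{G}_2)$, I would first check that $\omega_\alpha$ and $\omega_\beta$ are fast majorants for $\alpha,\beta\in(0,1]$. Since $\omega_\alpha(t)/t=t^{\alpha-1}$ is non-increasing on $(0,\infty)$, the function $\omega_\alpha$ is a majorant; moreover, $\int_0^\delta t^{\alpha-1}\,dt=\delta^\alpha/\alpha\leq M\omega_\alpha(\delta)$, so \eqref{eq2x} holds. The same argument works for $\omega_\beta$. Hence the equivalence $(\mathscr{F}_1)\Leftrightarrow(\mathscr{F}_2)$ applies, and substituting $\omega_\alpha(s)/s=s^{\alpha-1}$ and $\omega_\beta(s)/s=s^{\beta-1}$ into $(\mathscr{F}_2)$ yields precisely $(\mathscr{G}_2)$.

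For the second part I would verify the additional hypotheses $\omega_\alpha\in\mathscr{S}$ and $\omega_\beta$ regular. The derivative $\omega_\alpha'(t)=\alpha t^{\alpha-1}$ is non-increasing on $(0,1]$ for every $\alpha\in(0,1]$, and $\omega_\alpha(t)/(t\omega_\alpha'(t))=1/\alpha$ is uniformly bounded on $(0,1]$, so $\omega_\alpha\in\mathscr{S}$. For regularity of $\omega_\beta$ the fast condition is already verified, so only the slow condition \eqref{eq3x} needs to be checked:
\[
\delta\int_\delta^\infty t^{\beta-2}\,dt=\frac{\delta^\beta}{1-\beta},
\]
which is bounded by a multiple of $\omega_\beta(\delta)$ exactly when $\beta<1$. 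This is the precise reason that the assumption $\beta\neq 1$ appears in the statement, and it is the only subtle point of the argument. With these hypotheses secured, the equivalences $(\mathscr{F}_3)\Leftrightarrow(\mathscr{F}_4)\Leftrightarrow(\mathscr{F}_5)$ from Theorem \ref{th-3} apply, and substituting $\omega_1(s)=s^\alpha$ (so $\omega_1(d_{\mathbb{D}}(\phi))=(d_{\mathbb{D}}(\phi))^\alpha$) and $\omega_2(s)=s^\beta$ translates them verbatim into $(\mathscr{G}_3)\Leftrightarrow(\mathscr{G}_4)\Leftrightarrow(\mathscr{G}_5)$. There is no genuine obstacle beyond this bookkeeping; the main point is the failure of $\omega_1(t)=t$ to be slow, which forces the exclusion $\beta=1$.
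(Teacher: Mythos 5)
Your proposal is correct and follows exactly the route the paper intends: the paper states this corollary as an immediate specialization of Theorem \ref{th-3} with $\omega_{1}=\omega_{\alpha}$ and $\omega_{2}=\omega_{\beta}$, and your verifications that $\omega_{\alpha},\omega_{\beta}$ are fast, that $\omega_{\alpha}\in\mathscr{S}$, and that regularity of $\omega_{\beta}$ (namely the slow condition $\delta\int_{\delta}^{\infty}t^{\beta-2}\,dt=\delta^{\beta}/(1-\beta)$) is what forces $\beta\neq 1$ supply precisely the bookkeeping the paper leaves implicit. The only nit is the closing sentence, where the failure of slowness should be attributed to $\omega_{2}=\omega_{\beta}$ with $\beta=1$ (i.e.\ to the majorant $t\mapsto t$ in the role of $\omega_{2}$), not to $\omega_{1}$.
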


The proofs of Theorems \ref{Open-1},  \ref{thm-1.0}, \ref{thm-5.0}, \ref{t-3} and \ref{thm-8} will be presented in Sec. \ref{sec2}, and
the proof of Theorem \ref{th-3}  will be given in Sec. \ref{sec3}.

 \section{Equivalent norms  and Hardy-Littlewood type theorems on Lipschitz type spaces}\label{sec2}

\subsection{The proof of Theorem \ref{Open-1}}
Let $f$ be analytic in $\mathbb{D}$. We will show that
if $|f|\in\Lambda_{\omega_{\alpha},\infty}(\mathbb{D})$ for $\alpha>1$, then $f$ is a constant function, and $f\in\Lambda_{\omega_{\alpha},\infty}(\mathbb{D}).$
It follows from  the assumptions that, for all $z,w\in\mathbb{D}$, there is a positive constant $M$ such that
\be\label{o-p-1}\big||f(z)|-|f(w)|\big|\leq M|z-w|^{\alpha}.\ee
Let $E=\{\varsigma\in\mathbb{D}:~f(\varsigma)=0\}$.
We may assume that $\mathbb{D}\setminus E \neq \emptyset$.
For $z\in\mathbb{D}\setminus E$,
let $w=z+re^{i\theta}\in\mathbb{D}$, where $r\in(0,d_{\mathbb{D}}(z))$   and $\theta\in[0,2\pi]$.
Then, by (\ref{o-p-1}), we have
\beqq
0\leq\lim_{r\rightarrow0^{+}}\frac{\big||f(z)|-|f(z+re^{i\theta})|\big|}{r}\leq M\lim_{r\rightarrow0^{+}}r^{\alpha-1}=0,
\eeqq
which implies that
\beqq
0&=&\max_{\theta\in[0,2\pi]}\lim_{r\rightarrow0^{+}}\frac{\big||f(z)|-|f(z+re^{i\theta})|\big|}{r}\\
&=&\max_{\theta\in[0,2\pi]}\big||f(z)|_{x}\cos\theta+|f(z)|_{y}\sin\theta\big|\\
&=&\max_{\theta\in[0,2\pi]}\frac{1}{2}\left|\big(|f(z)|_{x}+i|f(z)|_{y}\big)e^{-i\theta}+\big(|f(z)|_{x}-i|f(z)|_{y}\big)e^{i\theta}\right|\\
&=&\big||f(z)|_{z}\big|+\big||f(z)|_{\overline{z}}\big|\\
&=&{|f'(z)|}.
\eeqq Hence $f'(z)\equiv0$ for all $z\in\mathbb{D}\setminus E$.
Since $\mathbb{D}\setminus E$ is a nonempty open subset of $\mathbb{D}$, $f'(z)\equiv0$ for all $z\in\mathbb{D}$.
Therefore, $f$ is a constant function in $\mathbb{D}$, and  $f\in\Lambda_{\omega_{\alpha},\infty}(\mathbb{D}).$
The proof of this theorem is complete.
\qed

%

\begin{LemF}{\rm (\cite[p.19]{Zy})}\label{Mink}
For $\nu\geq1$, Minkowski's inequality in infinite form is
$$\left(\int_{A_{1}}\left|\int_{B_{1}}\mathscr{X}(\zeta,\xi)d\mu_{\xi}\right|^{\nu}d\mu_{\zeta}\right)^{\frac{1}{\nu}}\leq
\int_{B_{1}}\left(\int_{A_{1}}\left|\mathscr{X}(\zeta,\xi)\right|^{\nu}d\mu_{\zeta}\right)^{\frac{1}{\nu}}d\mu_{\xi},$$
where $A_{1}$ and $B_{1}$ are measurable sets with positive measures $d\mu_{\zeta}$  and $d\mu_{\xi}$, respectively, and $\mathscr{X}$
is integrable on $A_{1}\times B_{1}$.
\end{LemF}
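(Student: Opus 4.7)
The plan is to establish this version of Minkowski's integral inequality through the duality characterization of the $L^\nu$-norm, which is the cleanest route. As a preliminary reduction I pass to a nonnegative integrand: by the triangle inequality for integrals,
\[
\left|\int_{B_1} \mathscr{X}(\zeta,\xi)\,d\mu_\xi\right| \leq \int_{B_1} |\mathscr{X}(\zeta,\xi)|\,d\mu_\xi,
\]
so setting $F(\zeta) := \int_{B_1} |\mathscr{X}(\zeta,\xi)|\,d\mu_\xi$ it is enough to show $\|F\|_{L^\nu(A_1)} \leq \int_{B_1}\|\mathscr{X}(\cdot,\xi)\|_{L^\nu(A_1)}\,d\mu_\xi$. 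The case $\nu=1$ is then an immediate application of Fubini's theorem on the product space $A_1\times B_1$.

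For $\nu>1$, let $\nu'=\nu/(\nu-1)$ be the conjugate exponent and invoke the duality identity
\[
\|F\|_{L^\nu(A_1)} = \sup\Bigl\{\int_{A_1} F(\zeta) g(\zeta)\,d\mu_\zeta : g\geq 0,\ \|g\|_{L^{\nu'}(A_1)}\leq 1\Bigr\}.
\]
For any admissible $g$, I would substitute the definition of $F$, apply Fubini--Tonelli to swap the order of integration in $\zeta$ and $\xi$, and then apply Hölder's inequality with exponents $\nu$ and $\nu'$ to the inner $\zeta$-integral, yielding
\[
\int_{A_1} F(\zeta) g(\zeta)\,d\mu_\zeta = \int_{B_1}\!\int_{A_1} |\mathscr{X}(\zeta,\xi)| g(\zeta)\,d\mu_\zeta\,d\mu_\xi \leq \int_{B_1}\left(\int_{A_1} |\mathscr{X}(\zeta,\xi)|^\nu\,d\mu_\zeta\right)^{\!1/\nu}\! d\mu_\xi,
\]
since $\|g\|_{\nu'}\leq 1$. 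Taking the supremum over such $g$ gives the claimed bound. An elementary alternative that avoids duality is to write $F^\nu = F^{\nu-1}\cdot F$, integrate over $A_1$, swap integrals by Fubini, apply Hölder with the same exponents to the resulting double integral, and divide both sides by $\|F\|_\nu^{\nu-1}$; this is equivalent in content.

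The only technical point is justifying the interchange of integrals and handling a priori finiteness. Because the integrands encountered above are nonnegative, Tonelli's theorem applies unconditionally, so the swap is free of hypotheses. If the right-hand side is infinite the inequality is trivial; if one is wary of invoking duality when $\|F\|_\nu$ might be infinite a priori, the standard workaround is to truncate $\mathscr{X}$ to bounded functions supported on sets of finite measure, prove the estimate in that setting, and recover the general case by the monotone convergence theorem. Beyond Fubini--Tonelli and Hölder's inequality, no additional machinery is needed, which is why the inequality admits a brief treatment in the reference from Zygmund cited in the statement.
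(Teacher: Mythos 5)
The paper offers no proof of Lemma F at all---it is quoted verbatim as a classical result with a citation to Zygmund---so there is no internal argument to compare yours against. Your proof is correct and is the standard textbook derivation: the reduction to a nonnegative integrand, the $\nu=1$ case by Tonelli, and the $\nu>1$ case by the duality description of the $L^{\nu}$-norm combined with Tonelli and H\"older (or equivalently the $F^{\nu}=F^{\nu-1}\cdot F$ trick with a truncation to guarantee finiteness before dividing). You also correctly flag the only delicate point, namely that the duality identity and the division step need $\sigma$-finiteness or a truncation argument; since every application in the paper is to Lebesgue measure on $[0,2\pi]$, arc length on a rectifiable curve, or area measure on a disk, this causes no difficulty.
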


It follows from Lewy's Theorem  that $f\in\mathscr{H}(\mathbb{D})$ is locally univalent and sense-preserving in
$\mathbb{D}$ if and only if $J_{f}>0$ in $\mathbb{D}$, which means that
$f_{z}\neq 0$ in $\mathbb{D}$ and the second complex dilatation
$\mu_{f} =\overline{f_{\bar{z}}}/f_{z}$
has the property that $|\mu_{f} (z)|<1$ in $\mathbb{D}$ (see \cite{Lewy}).
From the definition of $K$-quasiregular mappings, we know that if $f\in\mathscr{H}(\mathbb{D})$ is a harmonic $K$-quasiregular mapping if and only if $J_{f}(z)>0$
and $\mathscr{M}_{f}^{2}(z)\leq KJ_{f}(z)$  for all $z\in\mathbb{D}$.
By using almost the same proof method of \cite[Proposition 3.1]{KM}, we get the following result.

\begin{Lem}\label{mate-07}
For $K\geq1$, let $f$ be a harmonic $K$-quasiregular mapping of $\mathbb{D}$ into itself. Then, for $z\in\mathbb{D}$,
$$\mathscr{M}_{f}(z)\leq K\frac{1-|f(z)|^{2}}{1-|z|^{2}}.$$
\end{Lem}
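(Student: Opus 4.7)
The strategy is a classical Schwarz--Pick-type reduction: pre-compose with a disk automorphism to reduce to the base point $z=0$, then use the $K$-quasiregularity to reduce further to a Schwarz estimate on the ``minimum dilatation,'' which is finally handled via the Poisson representation. For the first step, fix $z_0\in\mathbb{D}$ and let $\psi(w)=(w+z_0)/(1+\overline{z_0}w)$ be the disk automorphism with $\psi(0)=z_0$ and $|\psi'(0)|=1-|z_0|^2$. Since $\psi$ is analytic and conformal, the composition $g:=f\circ\psi$ is again a harmonic $K$-quasiregular self-mapping of $\mathbb{D}$ (composition with an analytic map preserves harmonicity, and the ratio $\mathscr{M}^2/J$ is invariant under such composition), with $g(0)=f(z_0)$ and $\mathscr{M}_g(0)=(1-|z_0|^2)\mathscr{M}_f(z_0)$ by the chain rule. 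Hence the claim reduces to $\mathscr{M}_g(0)\leq K(1-|g(0)|^2)$ for an arbitrary harmonic $K$-quasiregular $g:\mathbb{D}\to\mathbb{D}$.

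For the second step, I would use the factorization $J_g = (|g_w|+|g_{\bar w}|)(|g_w|-|g_{\bar w}|) = \mathscr{M}_g\cdot(|g_w|-|g_{\bar w}|)$ to rewrite the quasiregularity inequality $\mathscr{M}_g^2\leq K J_g$ as $\mathscr{M}_g\leq K(|g_w|-|g_{\bar w}|)$. It then suffices to establish the Schwarz--Pick bound on the minimum dilatation,
\[
|g_w(0)|-|g_{\bar w}(0)|\leq 1-|g(0)|^2.
\]
For this final step, I would invoke the Poisson representation $g=P[g^{*}]$ (valid since $g$ is bounded harmonic, with radial-limit boundary values $g^{*}$ satisfying $|g^{*}|\leq 1$ a.e.) and differentiate under the integral, using $\partial_w P(0,e^{i\tau})=e^{-i\tau}$ and $\partial_{\bar w} P(0,e^{i\tau})=e^{i\tau}$, to obtain
\[
g_w(0)=\frac{1}{2\pi}\int_0^{2\pi}g^{*}(e^{i\tau})e^{-i\tau}\,d\tau,\qquad g_{\bar w}(0)=\frac{1}{2\pi}\int_0^{2\pi}g^{*}(e^{i\tau})e^{i\tau}\,d\tau,
\]
alongside $g(0)=\frac{1}{2\pi}\int_0^{2\pi}g^{*}(e^{i\tau})\,d\tau$. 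A careful choice of phases in the two derivative integrals, combined with the pointwise bound $|g^{*}|\leq 1$ and the mean-value identity for $g(0)$, then extracts the desired factor $1-|g(0)|^2$.

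The main obstacle is the last step. The classical analytic Schwarz--Pick proof uses post-composition with the M\"obius transformation $w\mapsto(w-g(0))/(1-\overline{g(0)}w)$ to normalize $g(0)=0$, but this operation destroys harmonicity and is unavailable here. Recovering the linear factor $1-|g(0)|^2$ rather than the weaker $\sqrt{1-|g(0)|^2}$ that falls out of a naive Cauchy--Schwarz argument applied to the $L^2$ bound $\frac{1}{2\pi}\int_0^{2\pi}|g^{*}-g(0)|^2\,d\tau\leq 1-|g(0)|^2$ is the heart of the technical work, and requires a delicate balancing of the first two Fourier modes of $g^{*}$ against its $L^{\infty}$ constraint.
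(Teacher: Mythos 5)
Your outline gets the routine reductions right, but it stops exactly where the lemma actually lives, so as it stands there is a genuine gap. Precomposing with the automorphism $\psi$ to normalize $z_0=0$ is fine ($f\circ\psi$ is again harmonic and $K$-quasiregular, and $\mathscr{M}_{f\circ\psi}(0)=(1-|z_0|^2)\mathscr{M}_f(z_0)$), and so is the factorization $J_g=\mathscr{M}_g\cdot(|g_w|-|g_{\bar w}|)$, which converts $\mathscr{M}_g^2\leq KJ_g$ into $\mathscr{M}_g(0)\leq K\bigl(|g_w(0)|-|g_{\bar w}(0)|\bigr)$. But the remaining claim, $|g_w(0)|-|g_{\bar w}(0)|\leq 1-|g(0)|^2$ for a harmonic self-map $g$ of $\mathbb{D}$, is not a finishing touch: it is the whole content of the lemma, and it is essentially the statement of the result of Kne\v{z}evi\'c--Mateljevi\'c that the paper cites in lieu of a proof. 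You acknowledge that the obvious estimates fail and promise ``a careful choice of phases'' and ``a delicate balancing of the first two Fourier modes,'' but you never exhibit that choice, so the proposal in effect reduces the lemma to itself.

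To see why no routine phase/Cauchy--Schwarz manipulation will close this, write $c_n=\widehat{g^*}(n)$, so that $g_w(0)=c_1$, $g_{\bar w}(0)=c_{-1}$, $g(0)=c_0$, and the target is $|c_1|-|c_{-1}|\leq 1-|c_0|^2$. Parseval gives $|c_1|^2+|c_{-1}|^2\leq 1-|c_0|^2$, and Cauchy--Schwarz applied to $c_1=\frac{1}{2\pi}\int (g^*-c_0)e^{-i\tau}\,d\tau$ gives $|c_1|\leq (1-|c_0|^2)^{1/2}$; both yield only $\lambda_g(0)\leq\sqrt{1-|g(0)|^2}$. Worse, the one-coefficient bound $|c_1|\leq 1-|c_0|^2$ is simply false: for $g^*(e^{i\tau})=e^{i\tau/2}$ on $(-\pi,\pi)$ one computes $c_0=c_1=2/\pi$ and $c_{-1}=-2/(3\pi)$, so $|c_1|=2/\pi>1-4/\pi^2=1-|c_0|^2$. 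Hence any correct argument must couple $c_1$ and $c_{-1}$ (equivalently, exploit $\lambda_g(0)=\min_{\gamma}|c_1+e^{i\gamma}c_{-1}|$) in an essential, non-obvious way; your proposal names this difficulty but supplies no mechanism for it. You should either reproduce the argument of Kne\v{z}evi\'c--Mateljevi\'c (Proposition 3.1 of the cited paper) for this step, or cite it outright as the paper does; without that, the proof is incomplete.
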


 Using Lemma \ref{mate-07},
we obtain the following lemma.

\begin{Lem}\label{mate-07b}
For $K\geq1$, let $f$ be a harmonic $K$-quasiregular mapping of $\mathbb{D}$ into $\mathbb{C}$. Then, for $z\in\mathbb{D}$ and $0<\delta<d_{\mathbb{D}}(z)$,
\begin{equation}
\label{mate07c}
\mathscr{M}_{f}(z)\leq \frac{2K\left(M_{z, \delta}-|f(z)|\right)}{\delta},
\end{equation}
where $M_{z,\delta}=\sup\{|f(w)|:~|w-z|<\delta \}$.
\end{Lem}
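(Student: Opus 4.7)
\textbf{Proof proposal for Lemma \ref{mate-07b}.} The plan is to reduce to Lemma \ref{mate-07} by a standard rescaling trick. If $M_{z,\delta}=0$, then $f$ vanishes on $\mathbb{D}(z,\delta)$ and hence on all of $\mathbb{D}$ by the identity principle for harmonic maps, so $\mathscr{M}_{f}(z)=0$ and the inequality is trivial. So I may assume $M_{z,\delta}>0$.

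Next, for fixed $\varepsilon>0$, set $N=M_{z,\delta}+\varepsilon$ and define
\[
g(\zeta):=\frac{1}{N}\,f(z+\delta\zeta),\qquad \zeta\in\mathbb{D}.
\]
Since affine precomposition and scalar multiplication preserve harmonicity, sense-preservation, and the $K$-quasiregular distortion inequality, $g$ is a harmonic $K$-quasiregular mapping of $\mathbb{D}$; and because $|z+\delta\zeta-z|=\delta|\zeta|<\delta$ for $\zeta\in\mathbb{D}$, we have $|g(\zeta)|\le M_{z,\delta}/N<1$, so $g$ maps $\mathbb{D}$ into $\mathbb{D}$. Applying Lemma \ref{mate-07} to $g$ at the point $0$ yields
\[
\mathscr{M}_{g}(0)\le K\,\frac{1-|g(0)|^{2}}{1-0}=K\,\frac{N^{2}-|f(z)|^{2}}{N^{2}}.
\]

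Now I carry out the routine chain rule computation: from $g_{\zeta}(0)=(\delta/N)\,f_{z}(z)$ and $g_{\bar\zeta}(0)=(\delta/N)\,f_{\bar z}(z)$, one gets $\mathscr{M}_{g}(0)=(\delta/N)\mathscr{M}_{f}(z)$. Substituting and factoring $N^{2}-|f(z)|^{2}=(N-|f(z)|)(N+|f(z)|)\le 2N\,(N-|f(z)|)$ gives
\[
\frac{\delta}{N}\,\mathscr{M}_{f}(z)\le K\,\frac{2N\bigl(N-|f(z)|\bigr)}{N^{2}}=\frac{2K\bigl(N-|f(z)|\bigr)}{N}.
\]
Rearranging yields $\mathscr{M}_{f}(z)\le 2K(M_{z,\delta}+\varepsilon-|f(z)|)/\delta$, and letting $\varepsilon\to 0^{+}$ produces \eqref{mate07c}.

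The only mild subtlety is the requirement in Lemma \ref{mate-07} that $g$ map into $\mathbb{D}$ rather than $\overline{\mathbb{D}}$; the $\varepsilon$-perturbation of the normalizing constant handles this cleanly. Beyond that, the argument is just a change of variables and a single application of Lemma \ref{mate-07}.
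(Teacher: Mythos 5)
Your proposal is correct and follows essentially the same route as the paper: rescale via $\zeta\mapsto f(z+\delta\zeta)$ divided by a normalizing constant, apply Lemma \ref{mate-07} at the origin, and use $1-t^{2}\le 2(1-t)$. The paper simply normalizes by $M_{z,\delta}$ itself after assuming $f$ is non-constant (which already forces the rescaled map into $\mathbb{D}$ by the maximum principle), whereas you add the $\varepsilon$-perturbation and the degenerate case $M_{z,\delta}=0$; these are harmless cosmetic differences.
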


\begin{proof}
We may assume that $f$ is not a constant.
For a fixed point $z\in\mathbb{D}$, let

$$F(\zeta)=\frac{f(z+\delta \zeta )}{M_{z,\delta}},~\zeta\in\mathbb{D}.$$
 Elementary calculations lead to
\be\label{eq-rtp-1}
F(0)=\frac{f(z)}{M_{z,\delta}}~\mbox{and}~\mathscr{M}_{F}(0)=
\frac{\delta}{M_{z, \delta}}\mathscr{M}_{f}(z).
\ee
Since $F$ is a harmonic $K$-quasiregular mapping
of $\mathbb{D}$ into itself, by (\ref{eq-rtp-1}) and Lemma \ref{mate-07}, we see that
\beqq
\frac{\delta}{M_{z,\delta}}\mathscr{M}_{f}(z)&=&\mathscr{M}_{F}(0)\leq K\left(1-|F(0)|^{2}\right)\leq2K(1-|F(0)|)\\
&=&2K\left(1-\frac{|f(z)|}{M_{z,\delta}}\right).
\eeqq
Consequently,
\[
\delta\mathscr{M}_{f}(z)\leq 2K\left(M_{z,\delta}-|f(z)|\right),
\]
which implies (\ref{mate07c}).
\end{proof}

 \begin{Lem}\label{mate-07d}
Suppose that $p\in[1,\infty)$ and $0<\varepsilon<1-|z|$,
$z\in \mathbb{D}$.
For $K\geq1$, let $f$ be a harmonic $K$-quasiregular mapping of $\mathbb{D}$ into $\mathbb{C}$. Then, for $z\in\mathbb{D}$,
\[
\mathscr{M}_{f}^{p}(z)\leq \frac{M}{\varepsilon^{p+2}}
\int_{\mathbb{D}(z,\varepsilon)}\{|f(w)|-|f(z)|\}_+^p dm(w),
\]
where $M$ is a positive constant which depends only on $p$ and $K$,
and $dm=dA/\pi$ $($see Theorem \ref{thm-1.0}$)$.
\end{Lem}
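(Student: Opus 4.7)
The plan is to combine the pointwise bound from Lemma \ref{mate-07b} with a sub-mean-value inequality applied to an auxiliary subharmonic function built from $|f|$. The key observation is that, since $f$ is a harmonic $K$-quasiregular mapping, the Remark following Theorem \ref{t-3} (taking $\lambda=1\geq 1-1/K^2$) implies that $|f|$ is subharmonic in $\mathbb{D}$. Consequently, $|f(\cdot)|-|f(z)|$ is subharmonic, its positive part $\{|f(\cdot)|-|f(z)|\}_+$ is subharmonic (as the maximum of a subharmonic function with the constant $0$), and finally $\{|f(\cdot)|-|f(z)|\}_+^p$ is subharmonic for $p\geq 1$ since $x\mapsto x^p$ is convex and non-decreasing on $[0,\infty)$.

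First I would apply Lemma \ref{mate-07b} with $\delta=\varepsilon/2<d_{\mathbb{D}}(z)$, which yields
\[
\mathscr{M}_f^p(z)\leq \frac{(4K)^p}{\varepsilon^p}\bigl(M_{z,\varepsilon/2}-|f(z)|\bigr)^p.
\]
By continuity of $|f|$ on the compact set $\overline{\mathbb{D}(z,\varepsilon/2)}$, there exists $w_0$ in this closed disk with $|f(w_0)|=M_{z,\varepsilon/2}$, and since $M_{z,\varepsilon/2}\geq|f(z)|$ one has $M_{z,\varepsilon/2}-|f(z)|=\{|f(w_0)|-|f(z)|\}_+$. Next I would invoke the sub-mean-value inequality for the subharmonic function $\{|f(\cdot)|-|f(z)|\}_+^p$ on the disk $\mathbb{D}(w_0,\varepsilon/2)$, which is contained in $\mathbb{D}(z,\varepsilon)$ by the triangle inequality and sits inside $\mathbb{D}$ thanks to the hypothesis $\varepsilon<1-|z|$:
\[
\bigl\{|f(w_0)|-|f(z)|\bigr\}_+^p \leq \frac{4}{\pi\varepsilon^2}\int_{\mathbb{D}(w_0,\varepsilon/2)}\{|f(w)|-|f(z)|\}_+^p\, dA(w) \leq \frac{4}{\varepsilon^2}\int_{\mathbb{D}(z,\varepsilon)}\{|f(w)|-|f(z)|\}_+^p\, dm(w),
\]
where the last inequality uses $dm=dA/\pi$ together with the nonnegativity of the integrand. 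Combining the two displays gives the desired estimate with $M=4^{p+1}K^p$, a quantity depending only on $p$ and $K$.

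The main obstacle is the subharmonicity chain — in particular, the fact that $|f|$ itself is subharmonic for a harmonic $K$-quasiregular mapping. Once this is secured (via the subharmonicity of $|f|^\lambda$ for $\lambda\geq 1-1/K^2$ recalled in the Remark after Theorem \ref{t-3}), every other step is routine: the choice $\delta=\varepsilon/2$ is dictated by the requirement that the disk $\mathbb{D}(w_0,\varepsilon/2)$ of the sub-mean-value inequality fit inside $\mathbb{D}(z,\varepsilon)$, and the remaining manipulations are simple bookkeeping of constants.
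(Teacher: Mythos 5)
Your proposal is correct and follows essentially the same route as the paper: apply Lemma \ref{mate-07b} with $\delta=\varepsilon/2$, observe that $\{|f(\cdot)|-|f(z)|\}_+^p$ is subharmonic, and use the sub-mean-value inequality over a radius-$\varepsilon/2$ disk contained in $\mathbb{D}(z,\varepsilon)$, arriving at the same constant $4^{p+1}K^p$. The only cosmetic difference is that you evaluate the supremum at a maximizing point $w_0$ in the closed disk, whereas the paper bounds $\{|f(w)|-|f(z)|\}_+^p$ uniformly for all $w\in\mathbb{D}(z,\varepsilon/2)$ before taking the supremum; both are valid.
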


\begin{proof}
Let $z\in \mathbb{D}$ be fixed with $0<\varepsilon<1-|z|$.
The function $\{ |f(w)|-|f(z)|\}_+$, $w\in \mathbb{D}$,
is subharmonic in $\mathbb{D}$.
By applying Lemma \ref{mate-07b} for $\delta=\varepsilon/2$,
we have
\begin{equation}
\label{eq-3-1}
\mathscr{M}_{f}(z)\leq \frac{4K\left(M_{z,\varepsilon/2}-|f(z)|\right)}{\varepsilon}
=\frac{4K}{\varepsilon}\sup_{w\in \mathbb{D}(z,\varepsilon/2)}\{ |f(w)|-|f(z)|\}_+
\end{equation}
Since $\{ |f(w)|-|f(z)|\}_{+}^p$ is subharmonic on $\mathbb{D}$,
for $w\in \mathbb{D}(z,\delta)$,
we have
\[
\{ |f(w)|-|f(z)|\}_{+}^p
\leq
\frac{1}{\delta^2}\int_{\mathbb{D}(z,\varepsilon)}\{ |f(\eta)|-|f(z)|\}_{+}^{p}dm(\eta),
\]
which combined with (\ref{eq-3-1}) implies that
\[
\mathscr{M}_{f}^p(z)\leq
\frac{4^{p+1}K^p}{\varepsilon^{p+2}}\int_{\mathbb{D}(z,\varepsilon)}\{ |f(\eta)|-|f(z)|\}_{+}^{p}dm(\eta).
\]
This completes the proof.
\end{proof}

From \cite[Theorem 1.8]{KM-2011},
we obtain the following result.

\begin{Lem}\label{real-harmonic-07}
Let $f$ be a real harmonic mapping of $\mathbb{D}$ into $(-1,1)$. Then,
there exists a constant $M>0$ which is independent of $f$
such that for $z\in\mathbb{D}$,
$$\mathscr{M}_{f}(z)\leq M\frac{1-|f(z)|^{2}}{1-|z|^{2}}.$$
\end{Lem}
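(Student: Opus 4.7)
The plan is to recognize the stated inequality as a Schwarz--Pick type bound for harmonic self-maps of the disk, and then invoke the Kalaj--Markovi\'c theorem cited as \cite[Theorem 1.8]{KM-2011} directly. Since $f$ is real-valued with $f(\mathbb{D}) \subset (-1,1) \subset \mathbb{D}$, we may regard $f$ as a complex-valued harmonic mapping of $\mathbb{D}$ into $\mathbb{D}$. Then \cite[Theorem 1.8]{KM-2011} asserts that
$$
\mathscr{M}_f(z) = |f_z(z)| + |f_{\bar z}(z)| \leq \frac{4}{\pi} \cdot \frac{1-|f(z)|^2}{1-|z|^2}, \qquad z \in \mathbb{D},
$$
which is precisely the conclusion, with the universal constant $M = 4/\pi$.

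For a self-contained derivation of such a bound, the natural plan would be as follows. First, exploit M\"obius invariance in the source: both sides transform covariantly under pre-composition of $f$ with an automorphism of $\mathbb{D}$, so after a change of variable it suffices to prove $\mathscr{M}_f(0) \le M(1 - f(0)^2)$. Second, use the Poisson representation $f(w) = P[\varphi](w)$ with $\varphi \colon \mathbb{T} \to [-1,1]$, and compute $f_z(0)$ and $f_{\bar z}(0)$ as explicit Fourier integrals of $\varphi$ against the kernels $e^{\mp i\theta}/(2\pi)$. Third, maximize the functional $|f_z(0)| + |f_{\bar z}(0)|$ subject to the linear constraint $\tfrac{1}{2\pi} \int \varphi(e^{i\theta})\, d\theta = f(0) = c$; a standard variational argument shows the extremal $\varphi$ is a $\pm 1$-valued step function supported on complementary arcs of $\mathbb{T}$, and the maximum value is of the form $(\text{constant}) \cdot (1 - c^2)$.

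The main obstacle in such a first-principles argument is precisely this boundary optimization step, which is what produces the sharp weight $(1-c^2)$ rather than a weaker factor such as $(1-|c|)$ or $(1-|c|^2)^{1/2}$. Since the lemma only requires existence of some universal constant $M$, this sharpness is not needed here, and the most efficient route is simply to cite \cite[Theorem 1.8]{KM-2011} as above. An alternative route specific to the real-valued setting proceeds by writing $f = \re F$ for an analytic $F \colon \mathbb{D} \to \{w : |\re w| < 1\}$, composing with a conformal map of the vertical strip onto $\mathbb{D}$, and applying the classical Schwarz--Pick lemma to the resulting analytic self-map of $\mathbb{D}$; this again yields the conclusion with a universal constant $M$.
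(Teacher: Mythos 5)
Your proposal is correct and takes essentially the same route as the paper: the paper offers no argument beyond the single line ``From \cite[Theorem 1.8]{KM-2011}, we obtain the following result,'' i.e.\ it derives the lemma directly from the Kalaj--Vuorinen Schwarz-type inequality, exactly as you do (noting that $\mathscr{M}_f=|f_z|+|f_{\bar z}|=|\nabla f|$ for real-valued $f$). Your additional remarks on the sharp constant $4/\pi$ and the strip/Schwarz--Pick derivation go beyond what the paper records but are consistent with it.
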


 Using Lemma \ref{real-harmonic-07}
 and an argument similar to that in the proof of Lemma \ref{mate-07b},
we obtain the following lemma

\begin{Lem}\label{real-harmonic-07b}
Let $f$ be a real harmonic mapping of $\mathbb{D}$ into $\mathbb{R}$. Then, there exists a constant $M>0$ which is independent of $f$
such that
for $z\in\mathbb{D}$ and $0<\delta<d_{\mathbb{D}}(z)$,
\[
\mathscr{M}_{f}(z)\leq \frac{2M\left(M_{z, \delta}-|f(z)|\right)}{\delta},
\]
where $M_{z,\delta}=\sup\{|f(w)|:~|w-z|<\delta \}$.
\end{Lem}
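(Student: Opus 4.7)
The plan is to mirror the dilation argument used for the quasiregular case in Lemma \ref{mate-07b}, replacing the appeal to Lemma \ref{mate-07} by the real-harmonic Schwarz-Pick estimate provided by Lemma \ref{real-harmonic-07}. First I would dispose of the degenerate situation $M_{z,\delta}=0$: this forces $f$ to vanish identically on the disk $\mathbb{D}(z,\delta)$, hence on all of $\mathbb{D}$ by real-analyticity of harmonic functions, so both sides of the sought inequality are zero and there is nothing to prove.

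In the generic case $M_{z,\delta}>0$, I would normalize by rescaling. Setting
$$F(\zeta)=\frac{f(z+\delta\zeta)}{M_{z,\delta}},\qquad \zeta\in\mathbb{D},$$
produces a real harmonic function on $\mathbb{D}$ with $|F|\leq 1$. A short maximum-principle dichotomy then shows that either $F$ is a constant of modulus $1$, in which case $\mathscr{M}_{F}(0)=0$ and hence $\mathscr{M}_{f}(z)=0$ so the inequality is trivial, or $F$ actually maps $\mathbb{D}$ into the open interval $(-1,1)$, putting us exactly into the hypothesis of Lemma \ref{real-harmonic-07}.

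In the nontrivial case, applying Lemma \ref{real-harmonic-07} to $F$ at the origin furnishes a universal constant $M>0$, independent of $F$, such that
$$\mathscr{M}_{F}(0)\leq M\bigl(1-|F(0)|^{2}\bigr)\leq 2M\bigl(1-|F(0)|\bigr).$$
The chain-rule identities $F(0)=f(z)/M_{z,\delta}$ and $\mathscr{M}_{F}(0)=(\delta/M_{z,\delta})\mathscr{M}_{f}(z)$, exactly as displayed in (\ref{eq-rtp-1}) inside the proof of Lemma \ref{mate-07b}, convert this into
$$\frac{\delta}{M_{z,\delta}}\mathscr{M}_{f}(z)\leq \frac{2M\bigl(M_{z,\delta}-|f(z)|\bigr)}{M_{z,\delta}},$$
and multiplying through by $M_{z,\delta}/\delta$ delivers the assertion. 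There is essentially no obstacle here: all the geometric content is already encoded in Lemma \ref{real-harmonic-07}, and the only point requiring care is the maximum-principle dichotomy ensuring that the rescaled $F$ lands inside $(-1,1)$, so that Lemma \ref{real-harmonic-07} is genuinely applicable and the constant it supplies is independent of $f$.
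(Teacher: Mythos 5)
Your proposal is correct and is exactly the argument the paper intends: the paper states that Lemma \ref{real-harmonic-07b} follows from Lemma \ref{real-harmonic-07} by the same dilation argument as in the proof of Lemma \ref{mate-07b}, which is precisely what you carry out. Your extra care with the degenerate case $M_{z,\delta}=0$ and the maximum-principle dichotomy guaranteeing that the rescaled $F$ maps into the open interval $(-1,1)$ (so that Lemma \ref{real-harmonic-07} applies) is a welcome tightening of a detail the paper leaves implicit.
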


 \begin{Lem}\label{real-harmonic-07d}
Suppose that $p\in[1,\infty)$ and $0<\varepsilon<1-|z|$,
$z\in \mathbb{D}$.
Let $f$ be a real harmonic mapping of $\mathbb{D}$ into $\mathbb{R}$.
Then, for $z\in\mathbb{D}$,
\[
\mathscr{M}_{f}^{p}(z)\leq \frac{M}{\varepsilon^{p+2}}
\int_{\mathbb{D}(z,\varepsilon)}\{|f(w)|-|f(z)|\}_+^p dm(w),
\]
where $M$ is a positive constant which depends only on $p$,
and $dm=dA/\pi$ $($see Theorem \ref{thm-1.0}$)$.
\end{Lem}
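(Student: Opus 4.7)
The plan is to imitate the proof of Lemma~\ref{mate-07d} line by line, replacing the invocation of Lemma~\ref{mate-07b} (for harmonic $K$-quasiregular maps) with Lemma~\ref{real-harmonic-07b} (for real harmonic maps). Since both pointwise gradient bounds have exactly the same shape, with only the constant $K$ replaced by an absolute constant, the same two-step reduction (``pointwise $\mathscr M_f$ controlled by a local oscillation of $|f|$, then that oscillation controlled in $L^p$ by subharmonicity'') applies verbatim.

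First I would fix $z\in\mathbb{D}$ with $0<\varepsilon<1-|z|$ and apply Lemma~\ref{real-harmonic-07b} with $\delta=\varepsilon/2$. Since the supremum of $|f(w)|-|f(z)|$ over $w\in\mathbb{D}(z,\varepsilon/2)$ is $\geq 0$ (attained at $w=z$ as $0$), this gives
\[
\mathscr{M}_f(z)\leq \frac{4M}{\varepsilon}\sup_{w\in\mathbb{D}(z,\varepsilon/2)}\{|f(w)|-|f(z)|\}_+,
\]
for the absolute constant $M$ from Lemma~\ref{real-harmonic-07b}. Raising to the $p$-th power (using $\sup h^p=(\sup h)^p$ for $h\geq 0$) produces the factor $(4M)^p/\varepsilon^p$ in front.

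Second, I would verify the required subharmonicity. Because $f$ is real harmonic, $|f|$ is subharmonic on $\mathbb{D}$; subtracting the constant $|f(z)|$ preserves this, taking the positive part (i.e.\ $\max\{\cdot,0\}$) preserves it (positive part of a subharmonic function is subharmonic), and the composition with the convex non-decreasing function $t\mapsto t^p$ on $[0,\infty)$ yields that
\[
w\longmapsto \{|f(w)|-|f(z)|\}_+^{\,p}
\]
is subharmonic on $\mathbb{D}$. The sub-mean-value property on $\mathbb{D}(w,\varepsilon/2)$ then gives, for every $w\in\mathbb{D}(z,\varepsilon/2)$,
\[
\{|f(w)|-|f(z)|\}_+^{\,p}\leq \frac{4}{\varepsilon^{2}}\int_{\mathbb{D}(w,\varepsilon/2)}\{|f(\zeta)|-|f(z)|\}_+^{\,p}\,dm(\zeta),
\]
and since $\mathbb{D}(w,\varepsilon/2)\subset \mathbb{D}(z,\varepsilon)$ we may enlarge the integration region to $\mathbb{D}(z,\varepsilon)$.

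Combining the two estimates yields
\[
\mathscr{M}_f^{\,p}(z)\leq \frac{4^{p+1}M^p}{\varepsilon^{p+2}}\int_{\mathbb{D}(z,\varepsilon)}\{|f(\zeta)|-|f(z)|\}_+^{\,p}\,dm(\zeta),
\]
which is the desired inequality with a constant depending only on $p$. I do not anticipate any genuine obstacle: the only substantive point is the subharmonicity chain used in the second step, which is standard, and the numerical dependence of the constant on $p$ is transparent. The argument is strictly parallel to that of Lemma~\ref{mate-07d}, with Lemma~\ref{real-harmonic-07b} doing the work that Lemma~\ref{mate-07b} did there.
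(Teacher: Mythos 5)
Your proposal is correct and is exactly the argument the paper intends: the paper states this lemma without proof, leaving it as the verbatim analogue of the proof of Lemma \ref{mate-07d} with Lemma \ref{real-harmonic-07b} substituted for Lemma \ref{mate-07b}, which is precisely what you carry out (including the subharmonicity chain for $\{|f(\cdot)|-|f(z)|\}_+^p$ and the resulting constant $4^{p+1}M^p$).
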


\subsection{The proof of Theorem \ref{thm-1.0}}
\noindent $\mathbf{Case~1.}$We first give a proof in the case $p\in[1,\infty)$. We split the proof of this case into twelve steps.

\noindent $\mathbf{Step~1.}$ ``$(\mathscr{A}_{2})\Rightarrow(\mathscr{A}_{1})$".\\

 For any fixed $z\in\mathbb{D}$, let $r=2d_{\mathbb{D}}(z)/3$. By assumption, we see that
there is a positive constant $M$ such that
\be\label{eq-0.01k}
\left(\int_{0}^{2\pi}|f(e^{i\theta}z)-f(e^{i\theta}\xi)|^{p}d\theta\right)^{\frac{1}{p}}\leq M\omega(|z-\xi|)\leq M\omega(r)
\ee
for $\xi\in\overline{\mathbb{D}(z,r)}$.
For $\theta\in[0,2\pi]$ and $e^{i\theta}\xi\in\mathbb{D}(e^{i\theta}z,r)$, we have
$$f(e^{i\theta}\xi)=\frac{1}{2\pi}\int_{0}^{2\pi}P_{r}(\xi,e^{i\eta})f(ze^{i\theta}+e^{i\theta}re^{i\eta})d\eta,$$
where $$P_{r}(\xi,e^{i\eta})=\frac{r^{2}-|\xi-z|^{2}}{|re^{i\eta}-(\xi-z)|^{2}}.$$
Elementary calculations lead to
\beqq
\frac{\partial}{\partial\xi}P_{r}(\xi,e^{i\eta})&=&\frac{\overline{z}-\overline{\xi}}{|re^{i\eta}-(\xi-z)|^{2}}\\
&&+
\frac{(r^{2}-|\xi-z|^{2})(re^{-i\eta}-(\overline{\xi}-\overline{z}))}{|re^{i\eta}-(\xi-z)|^{4}}.
\eeqq
Then, for $e^{i\theta}\xi\in\mathbb{D}(e^{i\theta}z,3r/4)$, we have
\be\label{eq-0.1k}
\left|\frac{\partial}{\partial\xi}P_{r}(\xi,e^{i\eta})\right|\leq\frac{40}{r}.
\ee
From the similar proof process,  we obtain the following inequality
\be\label{eq-0.2k}
\left|\frac{\partial}{\partial\overline{\xi}}P_{r}(\xi,e^{i\eta})\right|\leq\frac{40}{r}.
\ee
It follows from (\ref{eq-0.1k}),  (\ref{eq-0.2k}) and H\"{o}lder's inequality that
\beq\label{eq-0.3k}
\big(\mathscr{M}_{f}(e^{i\theta}\xi)\big)^{p}&\leq&\int_{0}^{2\pi}\left(\mathscr{M}_{P_{r}}(\xi, e^{i\eta})\right)^p
\left|\mathscr{P}_{f}(\theta,\eta)\right|^{p}\frac{d\eta}{2\pi}\\ \nonumber
&\leq&\frac{80^{p}}{r^{p}}\int_{0}^{2\pi}
\left|\mathscr{P}_{f}(\theta,\eta)\right|^{p}\frac{d\eta}{2\pi},
\eeq where $\mathscr{P}_{f}(\theta,\eta)=f(ze^{i\theta}+e^{i\theta}re^{i\eta})-f(ze^{i\theta})$.
By taking $\xi=z$ in (\ref{eq-0.3k}) and integrating both sides of the inequality (\ref{eq-0.3k}) with respect to $\theta$ from $0$ to $2\pi$, we obtain  from (\ref{eq-0.01k}) that
\beqq
\int_{0}^{2\pi}\left(\mathscr{M}_{f}(ze^{i\theta})\right)^{p}d\theta
&\leq&
\frac{(80)^{p}}{r^{p}}\int_{0}^{2\pi}\left(\int_{0}^{2\pi}
\big|\mathscr{P}_{f}(\theta,\eta)\big|^{p}\frac{d\eta}{2\pi}\right) d\theta
\\
&=&
\frac{(80)^{p}}{r^{p}}\int_{0}^{2\pi}\left(\int_{0}^{2\pi}
\big|\mathscr{P}_{f}(\theta,\eta)\big|^{p}d\theta\right) \frac{d\eta}{2\pi}
\\
&\leq&
(80M)^p\left(\frac{\omega\left(\frac{2}{3}d_{\mathbb{D}}(z)\right)}{\frac{2}{3}d_{\mathbb{D}}(z)}\right)^{p}\\
 &\leq& (120M)^{p}\left(\frac{\omega\big(d_{\mathbb{D}}(z)\big)}{d_{\mathbb{D}}(z)}\right)^{p}.
 \eeqq

\noindent $\mathbf{Step~2.}$ ``$(\mathscr{A}_{1})\Rightarrow(\mathscr{A}_{2})$".\\

Since $\mathbb{D}$ is a
$\Lambda_{\omega}$-extension domain,
we see that, for each pair of points
$z_{1},z_{2}\in \mathbb{D}$, there is a  rectifiable curve $\gamma\subset \mathbb{D}$ joining $z_{1}$ to $z_{2}$
such that
\be\label{eq-0.4k}
\int_{\gamma}\frac{\omega\big(d_{\mathbb{D}}(z)\big)}{d_{\mathbb{D}}(z)}ds(z)\leq\,M\omega(|z_{1}-z_{2}|),
\ee
where $M$ is a positive constant.
By Lemma F, we have
$$\left(\int_{0}^{2\pi}\left(\int_{\gamma}\mathscr{M}_{f}(ze^{i\eta})ds(z)\right)^{p}d\eta\right)^{\frac{1}{p}}
\leq\int_{\gamma}\left(\int_{0}^{2\pi}\left(\mathscr{M}_{f}(ze^{i\eta})\right)^{p}d\eta\,\right)^{\frac{1}{p}}ds(z),$$
which, together with the assumption and (\ref{eq-0.4k}), implies that there is a positive constant $M$  such that
\beqq
\mathcal{L}_{p}[f](z_{1},z_{2})&\leq&
\left(\int_{0}^{2\pi}\left(\int_{\gamma}\mathscr{M}_{f}(ze^{i\eta})ds(z)\right)^{p}d\eta\right)^{\frac{1}{p}}\\
&\leq&
\int_{\gamma}\left(\int_{0}^{2\pi}\left(\mathscr{M}_{f}(ze^{i\eta})\right)^{p}d\eta\right)^{\frac{1}{p}}ds(z)\\
&\leq& M\int_{\gamma}\frac{\omega\big(d_{\mathbb{D}}(z)\big)}{d_{\mathbb{D}}(z)}ds(z)\\
&\leq&\,M^{2}\omega(|z_{1}-z_{2}|).
\eeqq
Since $f$ is continuous on $\overline{\mathbb{D}}$,
the above inequality implies that
$f\in\Lambda_{\omega,p}(\overline{\mathbb{D}})$.

\noindent $\mathbf{Step~3.}$ ``$(\mathscr{A}_{2})\Rightarrow(\mathscr{A}_{3})$".\\

Let $f=h+\overline{g}=u+iv$, where $h=u_{1}+iv_{1}$ and $g=u_{2}+iv_{2}$. Then $f\in\Lambda_{\omega,p}(\overline{\mathbb{D}})$ if and only if
$u,~v\in\Lambda_{\omega,p}(\overline{\mathbb{D}})$. Let $F=h+g$ and $\widetilde{v}=\mbox{Im}(F)$, where ``$\mbox{Im}$" is the imaginary part of a complex number.
Since $\mbox{Im}(iF)=\mbox{Im}(if)=u$, we see that
\be\label{eq-gj-0.1}\mathscr{M}_{\mbox{Re}(iF)}=\mathscr{M}_{\widetilde{v}}=\mathscr{M}_{u},\ee
where ``$\mbox{Re}$" is the real part of a complex number. By (\ref{eq-gj-0.1}) and Step 2, we have
$\widetilde{v}\in\Lambda_{\omega,p}(\overline{\mathbb{D}}),$ which implies that
\be\label{eq-gj-0.2}v_{1}=\frac{v+\widetilde{v}}{2}\in\Lambda_{\omega,p}(\overline{\mathbb{D}})\ee
and
\be\label{eq-gj-0.3}v_{2}=\frac{\widetilde{v}-v}{2}\in\Lambda_{\omega,p}(\overline{\mathbb{D}}).\ee
It follows from $\mathscr{M}_{u_{1}}=\mathscr{M}_{v_{1}}$, $\mathscr{M}_{u_{2}}=\mathscr{M}_{v_{2}}$ and Step 2  that
\be\label{eq-gj-0.4}u_{1}\in\Lambda_{\omega,p}(\overline{\mathbb{D}})\ee
and
\be\label{eq-gj-0.5}u_{2}\in\Lambda_{\omega,p}(\overline{\mathbb{D}}).\ee
Consequently, $h\in\Lambda_{\omega,p}(\overline{\mathbb{D}})$ follows from $(\ref{eq-gj-0.2})$ and $(\ref{eq-gj-0.4})$, and
$g\in\Lambda_{\omega,p}(\overline{\mathbb{D}})$ follows from $(\ref{eq-gj-0.3})$ and $(\ref{eq-gj-0.5})$.

\noindent $\mathbf{Step~4.}$ ``$(\mathscr{A}_{2})\Rightarrow(\mathscr{A}_{6})$",  ``$(\mathscr{A}_{2})\Rightarrow(\mathscr{A}_{8})$", ``$(\mathscr{A}_{3})\Rightarrow(\mathscr{A}_{2})$", ``$(\mathscr{A}_{3})\Rightarrow(\mathscr{A}_{4})$",
``$(\mathscr{A}_{2})\Rightarrow(\mathscr{A}_{5})$", ``$(\mathscr{A}_{6})\Rightarrow(\mathscr{A}_{7})$" and ``$(\mathscr{A}_{8})\Rightarrow(\mathscr{A}_{9})$"  are obvious.

\noindent $\mathbf{Step~5.}$ ``$(\mathscr{A}_{5})\Rightarrow(\mathscr{A}_{2})$".\\

Let $f$ be a harmonic $K$-quasiregular mapping in $\mathbb{D}$ with $|f|\in\Lambda_{\omega,p}(\overline{\mathbb{D}}).$
Then,  there is
a positive constant $M$ such that
\begin{equation}
\label{Lip-5}
\left(\int_{0}^{2\pi}||f(e^{i\eta}z_{1})|-|f(e^{i\eta}z_{2})||^{p}d\eta\right)^{\frac{1}{p}}\leq M\omega(|z_{1}-z_{2}|),
\end{equation}
for $z_1, z_2\in \overline{\mathbb{D}}$.
Let $z\in \mathbb{D}$ be fixed, and
let $\varepsilon_0=d_{\mathbb{D}}(z)/2$. Set $$\mathscr{N}_{p}=\int_{0}^{2\pi}\left(\mathscr{M}_{f}(ze^{i\eta})\right)^{p}d\eta.$$
Then, by Lemma \ref{mate-07d},
there is a positive constant $M^{\ast}$ such that
\begin{eqnarray*}
\mathscr{N}_{p}
&\leq &
\int_{0}^{2\pi}\frac{M^{\ast}}{\varepsilon_0^{p+2}}
\int_{\mathbb{D}(ze^{i\eta},\varepsilon_0)}\{|f(w)|-|f(ze^{i\eta})|\}_+^p dm(w)d\eta
\\
&=&
\int_{0}^{2\pi}\frac{M^{\ast}}{\varepsilon_0^{p+2}}
\int_{\mathbb{D}(z,\varepsilon_0)}\{|f(we^{i\eta})|-|f(ze^{i\eta})|\}_+^p dm(w)d\eta
\\
&=&
\frac{M^{\ast}}{\varepsilon_0^{p+2}}
\int_{\mathbb{D}(z,\varepsilon_0)}
\int_{0}^{2\pi}\{|f(we^{i\eta})|-|f(ze^{i\eta})|\}_+^p d\eta dm(w),
\end{eqnarray*}
which, together with (\ref{Lip-5}), implies that there is a positive constant $M$ such that
\beq\label{eq-kp-0.1}
\mathscr{N}_{p}&\leq&
\frac{M^{\ast}M^{p}}{\varepsilon_0^{p+2}}
\int_{\mathbb{D}(z,\varepsilon_0)}
\omega(|w-z|)^pdm(w)
\\ \nonumber
&\leq &
\frac{M^{\ast}M^{p}}{\varepsilon_0^{p}}
\big(\omega(\varepsilon_0)\big)^p
\\ \nonumber
&\leq &
2^pM^{\ast}M^{p}\frac{\big(\omega(d_{\mathbb{D}}(z))\big)^p}{\big(d_{\mathbb{D}}(z)\big)^{p}}.
\eeq
Combining (\ref{eq-kp-0.1}) and Step 2 gives
 $f\in\Lambda_{\omega,p}(\overline{\mathbb{D}}).$

\noindent $\mathbf{Step~6.}$  ``$(\mathscr{A}_{6})\Rightarrow(\mathscr{A}_{2})$". \\

Since $u\in\Lambda_{\omega,p}(\overline{\mathbb{D}}),$  by using the similar reasoning as in the proof of ``$(\mathscr{A}_{2})\Rightarrow(\mathscr{A}_{1})$",
we see that there is a positive constant $M$ such that, for $z\in\mathbb{D}$,
\be\label{h-j-0.1}\left(\int_{0}^{2\pi}\left(\mathscr{M}_{u}(ze^{i\theta})\right)^{p}d\theta\right)^{\frac{1}{p}}\leq M\frac{\omega\big(d_{\mathbb{D}}(z)\big)}{d_{\mathbb{D}}(z)}.\ee
Note that $u=(h+g+\overline{h+g})/2$, which gives that $\mathscr{M}_{u}=|h'+g'|$. Since $f$ is a harmonic $K$-quasiregular mapping in $\mathbb{D}$, we see that
\be\label{h-j-0.2}\frac{1}{K}\mathscr{M}_{f}\leq |h'|-|g'|\leq|h'+g'|=\mathscr{M}_{u}. \ee
Hence $(\mathscr{A}_{2})$ follows from (\ref{h-j-0.1}), (\ref{h-j-0.2}) and Step 2.

\noindent $\mathbf{Step~7.}$   ``$(\mathscr{A}_{8})\Rightarrow(\mathscr{A}_{2})$".\\

Since $v\in\Lambda_{\omega,p}(\overline{\mathbb{D}}),$  by using the similar reasoning as in the proof of ``$(\mathscr{A}_{2})\Rightarrow(\mathscr{A}_{1})$",
we see that there is a positive constant $M$ such that, for $z\in\mathbb{D}$,
\be\label{h-j-0.4}\left(\int_{0}^{2\pi}\left(\mathscr{M}_{v}(ze^{i\theta})\right)^{p}d\theta\right)^{\frac{1}{p}}\leq M\frac{\omega\big(d_{\mathbb{D}}(z)\big)}{d_{\mathbb{D}}(z)}.\ee
It is not difficult to know that $v=(h-g-\overline{(h-g)})/(2i)$, which yields that $\mathscr{M}_{v}=|h'-g'|$. Since $f$ is a harmonic $K$-quasiregular mapping in $\mathbb{D}$, we see that
\be\label{h-j-0.5}\frac{1}{K}\mathscr{M}_{f}\leq |h'|-|g'|\leq|h'-g'|=\mathscr{M}_{v}. \ee
Hence $(\mathscr{A}_{2})$ follows from (\ref{h-j-0.4}), (\ref{h-j-0.5}) and Step 2.

\noindent $\mathbf{Step~8.}$ ``$(\mathscr{A}_{4})\Rightarrow(\mathscr{A}_{3})$" follows from ``$(\mathscr{A}_{5})\Rightarrow(\mathscr{A}_{2})$".

\noindent $\mathbf{Step~9.}$  ``$(\mathscr{A}_{7})\Rightarrow(\mathscr{A}_{6})$" and
``$(\mathscr{A}_{9})\Rightarrow(\mathscr{A}_{8})$" follow from
Lemma \ref{real-harmonic-07d}
and the argument in ``$(\mathscr{A}_{5})\Rightarrow(\mathscr{A}_{2})$".

 \noindent $\mathbf{Step~10.}$   ``$(\mathscr{A}_{2})\Rightarrow(\mathscr{A}_{10})$".\\

 By using the similar reasoning as in the proof of (\ref{eq-0.3k}), we see that, for $e^{i\eta}\xi\in\mathbb{D}(e^{i\eta}z,d_{\mathbb{D}}(z)/2)$, there is a positive constant $M$
 such that
 $$\int_{0}^{2\pi}\left(\mathscr{M}_{f}(e^{i\eta}\xi)\right)^{p}d\eta
 \leq M^{p}\left(\frac{\omega\big(d_{\mathbb{D}}(z)\big)}{d_{\mathbb{D}}(z)}\right)^{p}.$$
 Then, by Lemma F, we have
 \beqq
 \mathscr{Y}_{u,q,p}(z)&\leq&\mathscr{Y}_{f,q,p}(z)
 \\
 &=&
 V_{q}(z)
 \left(\int_{0}^{2\pi}\left({\int_{\mathbb{D}\left(z,d_{\mathbb{D}}(z)/2\right)}\left(\mathscr{M}_{f}(\xi e^{i\eta})\right)^{q}dA(\xi)}
\right)^{\frac{p}{q}}d\eta\right)^{\frac{1}{p}}
\\
&\leq&
V_{q}(z)
\left(\int_{\mathbb{D}\left(z,d_{\mathbb{D}}(z)/2\right)}\left({\int_{0}^{2\pi}\left(\mathscr{M}_{f}(\xi e^{i\eta})\right)^{p}d\eta}
\right)^{\frac{q}{p}}dA(\xi)\right)^{\frac{1}{q}}
\\
&\leq&
V_{q}(z)
\left(\int_{\mathbb{D}\left(z,d_{\mathbb{D}}(z)/2\right)}\left(M^{p}\left(\frac{\omega\big(d_{\mathbb{D}}(z)\big)}{d_{\mathbb{D}}(z)}\right)^{p}
\right)^{\frac{q}{p}}dA(\xi)\right)^{\frac{1}{q}}
\\
&=&
 M\frac{\omega\big(d_{\mathbb{D}}(z)\big)}{d_{\mathbb{D}}(z)},
 \eeqq
where $ V_{q}(z)=1/\left|\mathbb{D}\left(z,d_{\mathbb{D}}(z)/2\right)\right|^{\frac{1}{q}}$ and $q\leq p<\infty$.

\noindent $\mathbf{Step~11.}$   ``$(\mathscr{A}_{10})\Rightarrow(\mathscr{A}_{2})$".\\

Since $\mathscr{M}_{u}=|h'+g'|$, we see that   $\mathscr{M}_{u}^{q}$ is subharmonic in $\mathbb{D}$
for $q\in (0,\infty)$. Then we have
 \beqq
 \big(\mathscr{M}_{u}(ze^{i\eta})\big)^{q}\leq\frac{1}{\left|\mathbb{D}\big(ze^{i\eta},\frac{d_{\mathbb{D}}(z)}{2}\big)\right|}
\int_{\mathbb{D}\big(ze^{i\eta},\frac{d_{\mathbb{D}}(z)}{2}\big)}\left(\mathscr{M}_{u}(\xi )\right)^{q}dA(\xi),
 \eeqq
 which, together with (\ref{h-j-0.2}) and $(\mathscr{A}_{10})$, implies  that there is a positive constant $M$ such that
 \beq\label{h-j-0.9}
 \frac{1}{K}\left(\int_{0}^{2\pi}\left(\mathscr{M}_{f}(ze^{i\eta})\right)^{p}d\eta\right)^{\frac{1}{p}}&\leq&
 \left(\int_{0}^{2\pi}\left(\mathscr{M}_{u}(ze^{i\eta})\right)^{p}d\eta\right)^{\frac{1}{p}}\\ \nonumber
 &\leq&\mathscr{Y}_{u,q,p}(z)\\ \nonumber
&\leq& M\frac{\omega\big(d_{\mathbb{D}}(z)\big)}{d_{\mathbb{D}}(z)},
 \eeq where $0<q\leq p<\infty$.
 It follows from (\ref{h-j-0.9}) and $(\mathscr{A}_{1})\Rightarrow(\mathscr{A}_{2})$ that $f\in\Lambda_{\omega,p}(\overline{\mathbb{D}}).$

\noindent $\mathbf{Step~12.}$  The proof of  $(\mathscr{A}_{11})\Leftrightarrow(\mathscr{A}_{2})$ is similar to  $(\mathscr{A}_{10})\Leftrightarrow(\mathscr{A}_{2})$.\\

$\mathbf{Case~2.}$  $p=\infty$. The technical route used to prove Case 1 is still valid for Case 2.
We only need to replace some formulas when $p\in[1,\infty)$ with corresponding formulas when $p=\infty$ to complete the proof. Therefore, we will not repeat it here.
The proof of this theorem is complete.
\qed

 \begin{Lem}\label{lem-4.0}
Let $\omega$ be a majorant  such that, for all $\delta\in[0,\pi]$,
\be\label{HJK-1}\delta\int_{\delta}^{\pi}\frac{\omega(t)}{t^{2}}dt\leq\,M\omega(\delta),\ee where $M$ is a positive constant.
For $p\in[1,\infty]$, if  $\varphi\in\Lambda_{\omega,p}(\mathbb{T})$, then there is a positive constant $M$
which depends only on $\| \varphi\|_{\Lambda_{\omega,p}(\mathbb{T}),s}$ such that
$$
 \left\{
 \begin{array}{ll}
 \left(\int_{0}^{2\pi}|\varphi\left(\widetilde{z}e^{i\eta}\right)-P[\varphi](ze^{i\eta})|^{p}d\eta\right)^{\frac{1}{p}}\leq M\omega(d_{\mathbb{D}}(z)), &p\in[1,\infty),
\\
|\varphi\left(\widetilde{z}\right)-P[\varphi](z)|\leq M\omega(d_{\mathbb{D}}(z)), &p=\infty.
\end{array}
\right.
$$
where $z\in\mathbb{D}\backslash\{0\}$ and $\widetilde{z}=z/|z|$.
\end{Lem}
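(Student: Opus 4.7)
The plan is to exploit the rotation-invariance of the Poisson kernel together with the integral-form Minkowski inequality (Lemma F), and then reduce matters to a one-dimensional weighted estimate controlled by the hypothesis (\ref{HJK-1}). Write $z=re^{i\theta_0}$ with $r=|z|$, so that $\widetilde z=e^{i\theta_0}$ and $d_{\mathbb D}(z)=1-r$. Using the normalization $\frac{1}{2\pi}\int_0^{2\pi}\mathbf{P}(z,e^{i\tau})\,d\tau=1$, changing the variable $\tau\mapsto\tau+\eta$, and invoking the rotation-invariance $\mathbf{P}(ze^{i\eta},e^{i(\tau+\eta)})=\mathbf{P}(z,e^{i\tau})$, I would first establish the identity
\begin{equation*}
P[\varphi](ze^{i\eta})-\varphi(\widetilde z\,e^{i\eta})=\frac{1}{2\pi}\int_0^{2\pi}\bigl(\varphi(e^{i(\tau+\eta)})-\varphi(e^{i(\theta_0+\eta)})\bigr)\,\mathbf{P}(z,e^{i\tau})\,d\tau.
\end{equation*}

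For $p\in[1,\infty)$, I would then apply Lemma F to pull the $L^{p}(d\eta)$-norm inside the $\tau$-integral, obtaining
\begin{equation*}
\left(\int_0^{2\pi}|P[\varphi](ze^{i\eta})-\varphi(\widetilde z\,e^{i\eta})|^{p}\,d\eta\right)^{1/p}\le\frac{1}{2\pi}\int_0^{2\pi}\mathcal{L}_{p}[\varphi](e^{i\tau},e^{i\theta_0})\,\mathbf{P}(z,e^{i\tau})\,d\tau.
\end{equation*}
Since $\varphi\in\Lambda_{\omega,p}(\mathbb{T})$, the inner $\mathcal{L}_{p}$-factor is dominated by $\|\varphi\|_{\Lambda_{\omega,p}(\mathbb{T}),s}\,\omega(|e^{i\tau}-e^{i\theta_0}|)$, and the whole problem is reduced to bounding
\begin{equation*}
J(z):=\int_0^{2\pi}\omega(|e^{i\tau}-e^{i\theta_0}|)\,\mathbf{P}(z,e^{i\tau})\,d\tau\le M\omega(d_{\mathbb D}(z)).
\end{equation*}
For $p=\infty$ I would skip Minkowski and apply the pointwise estimate $|\varphi(e^{i\tau})-\varphi(\widetilde z)|\le\|\varphi\|_{\Lambda_{\omega,\infty}(\mathbb{T}),s}\omega(|e^{i\tau}-\widetilde z|)$ directly inside the integral defining $P[\varphi](z)-\varphi(\widetilde z)$, which reduces to exactly the same bound on $J(z)$.

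To finish, I would use the standard comparisons $|e^{i\tau}-e^{i\theta_0}|\asymp|\tau-\theta_0|$ and $\mathbf{P}(z,e^{i\tau})\le M\delta/(\delta^{2}+(\tau-\theta_0)^{2})$, valid for $|\tau-\theta_0|\le\pi$, with $\delta:=1-r=d_{\mathbb D}(z)$. Splitting $J(z)$ at $|\tau-\theta_0|=\delta$, the near-diagonal piece is bounded by $\omega(\delta)\int_0^{\delta}\delta/(\delta^{2}+s^{2})\,ds\le(\pi/4)\omega(\delta)$ using the monotonicity of $\omega$, while the far-diagonal piece is controlled by $M\delta\int_{\delta}^{\pi}\omega(s)s^{-2}\,ds\le M'\omega(\delta)$ thanks to hypothesis (\ref{HJK-1}). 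Adding these yields $J(z)\le M''\omega(d_{\mathbb D}(z))$, which finishes the proof.

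The main obstacle, really the content of the lemma, is the weighted Poisson integral bound on $J(z)$; everything else is a reduction. It is precisely the far-diagonal regime that forces the appearance of (\ref{HJK-1}): that condition is exactly what tames the (otherwise non-integrable) slow decay of $\omega$ against the Poisson weight on the scale of arclength outside the disk of radius $\delta$.
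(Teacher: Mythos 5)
Your proof is correct and follows essentially the same route as the paper's: rotation invariance of the Poisson kernel plus Minkowski's integral inequality (Lemma F) reduce the claim to the weighted bound $\int_0^{2\pi}\omega(|e^{i\tau}-e^{i\theta_0}|)\mathbf{P}(z,e^{i\tau})\,d\tau\leq M\omega(1-|z|)$, which is then handled by splitting at $|\tau-\theta_0|=1-|z|$ and invoking (\ref{HJK-1}) for the tail. The only cosmetic difference is that the paper first disposes of $|z|<1/4$ separately and runs the splitting argument only for $|z|\geq 1/4$, whereas your kernel estimate $\mathbf{P}(z,e^{i\tau})\leq M\,\delta/(\delta^{2}+(\tau-\theta_0)^{2})$ with $\delta=1-|z|$ does hold with an absolute constant for all $|z|<1$, so your single-case treatment is equally valid.
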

\begin{proof}
Without loss of generality, we assume that $p\in[1,\infty)$.
For $z\in\mathbb{D}\setminus\{ 0\}$,
let $$J(z)=\left(\int_{0}^{2\pi}|\varphi\left(\widetilde{z}e^{i\eta}\right)-P[\varphi](ze^{i\eta})|^{p}d\eta\right)^{\frac{1}{p}}.$$
We divide the proof of this lemma into two cases.

\noindent $\mathbf{Case~1.}$ Let $z=re^{i\theta}\in\mathbb{D}$ with $r\in(0,1/4)$.
 Then
 \be\label{eq-re-0}\mathbf{P}(z,e^{i\tau})\leq\frac{1-r^{2}}{(1-r)^{2}}\leq\frac{5}{3}.\ee
It follows from $\varphi\in\Lambda_{\omega,p}(\mathbb{T})$, (\ref{eq-re-0}) and Lemma F
 that there is a positive constant $M$ which depends only on $\| \varphi\|_{\Lambda_{\omega,p}(\mathbb{T}),s}$
 such that
\beqq
J(z)&=&
\left(\int_{0}^{2\pi}\left|\int_{0}^{2\pi}\big(\varphi(e^{i(\theta+\eta)})-\varphi(e^{i(\tau+\eta)})\big)\mathbf{P}(z,e^{i\tau})\frac{d\tau}{2\pi}\right|^{p}
d\eta\right)^{\frac{1}{p}}\\
&\leq&\frac{5}{3}\left(\int_{0}^{2\pi}\left(\int_{0}^{2\pi}\big|\varphi(e^{i(\theta+\eta)})-\varphi(e^{i(\tau+\eta)})\big|\frac{d\tau}{2\pi}\right)^{p}
d\eta\right)^{\frac{1}{p}}\\
&\leq&\frac{5}{3}\int_{0}^{2\pi}\left(\int_{0}^{2\pi}\big|\varphi(e^{i(\theta+\eta)})-\varphi(e^{i(\tau+\eta)})\big|^{p}d\eta\right)^{\frac{1}{p}}
\frac{d\tau}{2\pi}\\
&\leq&\frac{5}{3}M\omega(2),
\eeqq
which implies that
\beqq\frac{J(z)}{\omega(d_{\mathbb{D}}(z))}\leq\frac{\frac{5}{3}M\omega(2)}{\omega\left(\frac{3}{4}\right)}. \eeqq

\noindent $\mathbf{Case~2.}$ Let $z=re^{i\theta}\in\mathbb{D}$ with $r\geq1/4$.

By Lemma F
and the assumption that $\varphi\in\Lambda_{\omega,p}(\mathbb{T})$, there is a positive constant $M$ which depends only on $\| \varphi\|_{\Lambda_{\omega,p}(\mathbb{T}),s}$
 such that
\beq\label{eq-rt-2.0}\nonumber
J(z)
&\leq&\left(\int_{0}^{2\pi}\left(\int_{0}^{2\pi}\big|\varphi(e^{i(\theta+\eta)})-\varphi(e^{i(\tau+\eta)})\big|\mathbf{P}(z,e^{i\tau})\frac{d\tau}{2\pi}\right)^{p}
d\eta\right)^{\frac{1}{p}}\\ \nonumber
&\leq&\int_{0}^{2\pi}\left(\int_{0}^{2\pi}\big|\varphi(e^{i(\theta+\eta)})-\varphi(e^{i(\tau+\eta)})\big|^{p}\left(\mathbf{P}(z,e^{i\tau})\right)^{p}d\eta\right)^{\frac{1}{p}}
\frac{d\tau}{2\pi}\\
&\leq&MJ_{1}(z),
\eeq
where
$$J_{1}(z)=\frac{1}{2\pi}\int_{0}^{2\pi}\mathbf{P}(z,e^{i\tau})\omega(|e^{i\tau}-e^{i\theta}|)d\tau.$$
Next, we estimate $J_{1}$.
Let $E_{1}(\theta)=\{\tau\in[-\pi+\theta, \pi+\theta]:~|\tau-\theta|\leq1-r\}$ and $E_{2}(\theta)=[-\pi+\theta, \pi+\theta]\backslash E_{1}(\theta)$.
Since $\sin x\geq2x/\pi $ for $x\in[0,\pi/2]$, we see that
\beqq
J_{1}(z)&=&\frac{1}{2\pi}
\int_{-\pi+\theta}^{\pi+\theta}\frac{1-r^{2}}{(1-r)^{2}+4r\left(\sin\frac{\theta-\tau}{2}\right)^{2}}\omega(|e^{i\tau}-e^{i\theta}|)d\tau\\
&\leq&\frac{1}{2\pi}\int_{E_{1}(\theta)\cup E_{2}(\theta)}\frac{1-r^{2}}{(1-r)^{2}+\frac{4}{\pi^{2}}r(\theta-\tau)^{2}}\omega(|\tau-\theta|)d\tau\\
&\leq&\frac{1}{2\pi}\int_{E_{1}(\theta)}\frac{1-r^{2}}{(1-r)^{2}}\omega(1-r)d\tau+\frac{\pi}{2}\int_{ E_{2}(\theta)}\frac{1-r^{2}}{(\theta-\tau)^{2}}\omega(|\tau-\theta|)d\tau\\
&\leq&\frac{2}{\pi}\omega(1-r)+2\pi(1-r)\int_{1-r}^{\pi}\frac{\omega(t)}{t^{2}}dt,
\eeqq
which, together with (\ref{HJK-1}) and (\ref{eq-rt-2.0}), gives
\beqq J(z)\leq\, M\left(2\pi+\frac{2}{\pi}\right)\omega(d_{\mathbb{D}}(z)).\eeqq

Combining Cases 1 and 2 gives the desired result.
\end{proof}

 \subsection{The proof of Theorem \ref{thm-5.0}} We  split the proof of theorem into two steps.

 \noindent $\mathbf{Step~1.}$ We first prove  ``$(\mathscr{C}_{2})\Rightarrow(\mathscr{C}_{1})$" for $p\in[1,\infty]$.

  Without loss of generality, we assume that $p\in[1,\infty)$.
  Also, it suffices to prove that $P[\varphi]\in\Lambda_{\omega,p}({\mathbb{D}})$.
  For $z_{1},~z_{2}\in{\mathbb{D}}$,
  let $$\mathscr{J}=\left(\int_{0}^{2\pi}\left|P[\varphi](z_{1}e^{i\eta})-P[\varphi](z_{2}e^{i\eta})\right|^{p}d\eta\right)^{\frac{1}{p}}.$$
  We  divide the proof of this step into two cases.

\noindent $\mathbf{Case~1.}$ Let $z_{1},~z_{2}\in\mathbb{D}$ with $\max\{|z_{1}|,~|z_{2}|\}\leq1/2$.

Since
\beqq
\mathbf{P}(z,e^{i\tau})=\frac{1-|z|^{2}}{|z-e^{i\tau}|^{2}}=\frac{e^{i\tau}}{e^{i\tau}-z}+\frac{\overline{z}}{e^{-i\tau}-\overline{z}},
\eeqq
we see that,
for $z\in\mathbb{D}$  with $|z|\leq1/2$,
\beqq
\mathscr{M}_{\mathbf{P}}(z)=\frac{1}{|e^{i\tau}-z|^{2}}+\frac{1}{|e^{-i\tau}-\overline{z}|^{2}}\leq8.
\eeqq
Consequently,
\be\label{eq-rt-2.2}
\left|\varpi_{\tau}(z_{1},z_{2})\right|\leq\max_{|z|\leq1/2}\{\mathscr{M}_{\mathbf{P}}(z)\}|z_{1}-z_{2}|
\leq\frac{16}{\omega(2)}\omega(|z_{1}-z_{2}|),
\ee where $\varpi_{\tau}(z_{1},z_{2})=\mathbf{P}(z_{1},e^{i\tau})-\mathbf{P}(z_{2},e^{i\tau}).$

It follows from  (\ref{eq-rt-2.2}), $\varphi\in\Lambda_{\omega,p}(\mathbb{T})$ and Lemma F
 that there is a positive constant $M$ such that
\beqq
\mathscr{J}&=&\left(\int_{0}^{2\pi}\left|\int_{0}^{2\pi}\big(\varphi(e^{i(\tau+\eta)})-\varphi(e^{i\eta})\big)
(\varpi_{\tau}(z_{1},z_{2}))\frac{d\tau}{2\pi}\right|^{p}
d\eta\right)^{\frac{1}{p}}\\
&\leq&\int_{0}^{2\pi}\left(\int_{0}^{2\pi}|\varphi(e^{i(\tau+\eta)})-\varphi(e^{i\eta})|^{p}
\left|\varpi_{\tau}(z_{1},z_{2})\right|^{p}d\eta\right)^{\frac{1}{p}}\frac{d\tau}{2\pi}\\
&\leq&\frac{16}{\omega(2)}\omega(|z_{1}-z_{2}|)\int_{0}^{2\pi}\left(\int_{0}^{2\pi}|\varphi(e^{i(\tau+\eta)})-\varphi(e^{i\eta})|^{p}
d\eta\right)^{\frac{1}{p}}\frac{d\tau}{2\pi}\\
&\leq&16M\omega(|z_{1}-z_{2}|).
\eeqq

\noindent $\mathbf{Case~2.}$ Let $z_{1},~z_{2}\in\mathbb{D}$ with $\max\{|z_{1}|,~|z_{2}|\}>1/2$.
In this case, we write $z_{j}=r_{j}e^{i\theta_{j}}$  for  $j\in\{1,2\}$, where $r_{j}=|z_{j}|$.
Without loss of generality, we assume that $r_{2}\leq r_{1}$. For any fixed $r\in(0,1)$, let
$$F_{r}(z)=P[\varphi](rz),~z\in\overline{\mathbb{D}}.$$
Moreover, for $\xi_{1},~\xi_{2}\in\mathbb{T}$, let
\beqq
\mathscr{J}_{1}=\left(\int_{0}^{2\pi}|F_{r}(e^{i\eta}\xi_{1})-F_{r}(e^{i\eta}\xi_{2})|^{p}d\eta\right)^{\frac{1}{p}}.
\eeqq
Then, by $\varphi\in\Lambda_{\omega,p}(\mathbb{T})$ and Lemma F, we see that there is a positive constant $M$ which is independent of $r$ such that
\beq\label{eq-rt-2.4}\nonumber
\mathscr{J}_{1}&\leq&\int_{0}^{2\pi}\mathbf{P}(r,e^{i\tau})
\left(\int_{0}^{2\pi}\left|\varphi(e^{i(\tau+\eta)}\xi_{1})-\varphi(e^{i(\tau+\eta)}\xi_{2})\right|^{p}
d\eta\right)^{\frac{1}{p}}
\frac{d\tau}{2\pi}\\
&\leq&M\omega(|\xi_{1}-\xi_{2}|),
\eeq which implies that $F_{r}|_{\mathbb{T}}\in\Lambda_{\omega,p}(\mathbb{T})$
and $\| F_{r}|_{\mathbb{T}}\|_{\Lambda_{\omega,p}(\mathbb{T}),s}$
is independent of $r\in (0,1)$.

By Minkowski's inequality, we have
\beqq
\mathscr{J}&=&\bigg(\int_{0}^{2\pi}\big|F_{r_{1}}(e^{i(\theta_{1}+\eta)})-F_{r_{1}}(e^{i(\theta_{2}+\eta)})\\
&&+
F_{r_{1}}(e^{i(\theta_{2}+\eta)})-F_{r_{2}}(e^{i(\theta_{2}+\eta)})\big|^{p}d\eta\bigg)^{\frac{1}{p}}\\
&\leq&\mathscr{J}_{2}+\mathscr{J}_{3},
\eeqq
where $$\mathscr{J}_{2}=\left(\int_{0}^{2\pi}\left|F_{r_{1}}(e^{i(\theta_{1}+\eta)})-F_{r_{1}}(e^{i(\theta_{2}+\eta)})\right|^{p}d\eta\right)^{\frac{1}{p}}$$
and
$$\mathscr{J}_{3}=\left(\int_{0}^{2\pi}\left|F_{r_{1}}(e^{i(\theta_{2}+\eta)})-F_{r_{2}}(e^{i(\theta_{2}+\eta)})\right|^{p}d\eta\right)^{\frac{1}{p}}.$$
By (\ref{eq-rt-2.4}), we see that there is a positive constant $M$ such that
\be\label{eq-rt-2.5}
\mathscr{J}_{2}\leq M\omega(|e^{i\theta_{1}}-e^{i\theta_{2}}|).
\ee
Since, for any fixed $\eta\in[0,2\pi]$,
$$F_{r_{2}}(e^{i(\theta_{2}+\eta)})=F_{r_{1}}\left(\frac{r_{2}}{r_{1}}e^{i(\theta_{2}+\eta)}\right)=
P[F_{r_{1}}|_{\mathbb{T}}]\left(\frac{r_{2}}{r_{1}}e^{i(\theta_{2}+\eta)}\right),$$
by $F_{r_{1}}|_{\mathbb{T}}\in\Lambda_{\omega,p}(\mathbb{T})$,
$\| F_{r_1}|_{\mathbb{T}}\|_{\Lambda_{\omega,p}(\mathbb{T}),s}$
is independent of $r_1\in (0,1)$ and Lemma \ref{lem-4.0}, we see that
there is a positive constant $M$ such that
\be\label{eq-rt-2.6}
\mathscr{J}_{3}\leq M\omega\left(1-\frac{r_{2}}{r_{1}}\right).
\ee
From $r_{1}>1/2$, we obtain that
$$|e^{i\theta_{1}}-e^{i\theta_{2}}|\leq4|z_{1}-z_{2}|~\mbox{and}~1-\frac{r_{2}}{r_{1}}\leq2|z_{1}-z_{2}|,$$
which, together with (\ref{eq-rt-2.5}) and (\ref{eq-rt-2.6}), implies that there
is a positive constant $M$ such that
\[
\mathscr{J}\leq M\omega(|z_{1}-z_{2}|).
\]
Hence $(\mathscr{C}_{1})$ follows from Cases 1 and 2 for $p\in[1,\infty)$.

\noindent $\mathbf{Step~2.}$   ``$(\mathscr{C}_{1})\Rightarrow(\mathscr{C}_{2})$" for $p=\infty$.

Let
\be\label{bjh-1}
\varphi(e^{i\tau})=
\left\{
\begin{array}{ll}
\omega(\tau), & \tau\in[0,\pi], \\
\omega(2\pi-\tau), & \tau\in[\pi,2\pi].
\end{array}
\right.
\ee
Since
\be\label{lk-0.1}\omega(t_{1}+t_{2})\leq\omega(t_{1})+\omega(t_{2})\ee
for $t_{1},~t_{2}\in[0,\infty)$, we see that
$\varphi\in\Lambda_{\omega,\infty}(\mathbb{T})$.  Then, by $(\mathscr{C}_{1})$, we have $P[\varphi]\in\Lambda_{\omega,\infty}(\overline{\mathbb{D}})$.
In the following, we split the remaining proof of this step into two cases.

\noindent $\mathbf{Case~3.}$ Let $\delta\in[0,1/2]$.

In this case, let $r=1-\delta$. Since
$$P[\varphi](1)=\varphi(1)=\omega(0)=0,$$ by $P[\varphi]\in\Lambda_{\omega,\infty}(\overline{\mathbb{D}})$,
 we see that there is a positive constant $M$ such that
\beqq
\left|P[\varphi](r)\right|=\left|P[\varphi](r)-P[\varphi](1)\right|\leq M\omega(1-r),
\eeqq
which gives that
\beqq
\mathscr{J}_{4}&\leq&2\int_{1-r}^{\pi}\frac{(1-r)\omega(t)}{(1-r)^{2}+t^{2}}dt
\leq 2\int_{1-r}^{\pi}\frac{(1-r^{2})\omega(t)dt}{(1-r)^{2}+4r\sin^{2}\left(\frac{t}{2}\right)}\\ \nonumber
&\leq& 2\left|\int_{0}^{\pi}\mathbf{P}(r,e^{i\tau})\omega(\tau)d\tau
+\int_{\pi}^{2\pi}\mathbf{P}(r,e^{i\tau})\omega(2\pi-\tau)d\tau\right|\\ \nonumber
&=& 4\pi \left|P[\varphi](r)\right|\\
&\leq& 4\pi M\omega(1-r),
\eeqq where $$\mathscr{J}_{4}=(1-r)\int_{1-r}^{\pi}\frac{\omega(t)}{t^{2}}dt.$$

\noindent $\mathbf{Case~4.}$ Let $\delta\in[1/2, \pi]$.

For this case, we have
$$
\delta\int_{\delta}^{\pi}\frac{\omega(t)}{t^{2}}dt\leq\pi\int_{\frac{1}{2}}^{\pi}\frac{\omega(t)}{t^{2}}dt\leq
2\pi\int_{\frac{1}{2}}^{\pi}\frac{\omega(t)}{t}dt\leq2\pi^{2}\frac{\omega\left(\frac{1}{2}\right)}{\frac{1}{2}}\leq4\pi^{2}\omega(\delta).
$$
Combining Cases 3 and 4 gives $(\mathscr{C}_{2})$.
 The proof of this theorem is complete.
\qed



 \subsection{The proof of Theorem \ref{t-3}} $\mathbf{Case~1.}$ We first give a proof in the case $p\in[1,\infty)$.
We divide the proof of this case into two steps.

\noindent $\mathbf{Step~1.}$ ``$(\mathscr{D}_{1})\Rightarrow(\mathscr{D}_{2})$".\\

Since $f$ is continuous up to the boundary,
$f\in\Lambda_{\omega,p}(\mathbb{D})$ implies that
\be\label{eq-jl-1}|f|\in\Lambda_{\omega,p}(\overline{\mathbb{D}}).\ee
Since $|f|\in\Lambda_{\omega,p}(\mathbb{T})$
and $\omega$ is a regular majorant,
by Theorem \ref{thm-5.0} (or Corollary \ref{cor-0.11}), we see that
\be\label{eq-jl-2}P[|f|]\in\Lambda_{\omega,p}(\overline{\mathbb{D}}).\ee
Let $$\mathscr{J}_{5}=\left(\int_{0}^{2\pi}\left(P[|f|](ze^{i\theta})-|f(ze^{i\theta})|\right)^{p}d\theta\right)^{\frac{1}{p}}.$$
Then, by (\ref{eq-jl-1}), (\ref{eq-jl-2}) and  the Minkowski inequality, we see that there is a positive constant $M$ such that
\beqq
\mathscr{J}_{5}
&\leq&
\left(\int_{0}^{2\pi}\left|P[|f|](ze^{i\theta})-|f(\widetilde{z}e^{i\theta})|\right|^{p}d\theta\right)^{\frac{1}{p}}\\
&&+\left(\int_{0}^{2\pi}\left||f(\widetilde{z}e^{i\theta})|-|f(ze^{i\theta})|\right|^{p}d\theta\right)^{\frac{1}{p}}\\
&\leq&M\omega(d_{\mathbb{D}}(z)),
\eeqq where $z\in\mathbb{D}\backslash\{0\}$ and $\widetilde{z}=z/|z|$. Hence $(\mathscr{D}_{2})$ holds.

\noindent $\mathbf{Step~2.}$ ``$(\mathscr{D}_{2})\Rightarrow(\mathscr{D}_{1})$".\\

For a fixed point $z\in\mathbb{D}$ and a fixed point $\theta\in[0,2\pi]$, we have
\beq\label{kkl-1}
\{|f(we^{i\theta})|-|f(ze^{i\theta})|\}_{+}&\leq&\{P[|f|](we^{i\theta})-|f(ze^{i\theta})|\}_{+}\\ \nonumber
&\leq&\{P[|f|](we^{i\theta})-P[|f|](ze^{i\theta})\}_{+}\\ \nonumber
&&+\{P[|f|](ze^{i\theta})-|f(ze^{i\theta})|\}_{+},
\eeq
where $w\in\{\varsigma:~|\varsigma-z|\leq d_{\mathbb{D}}(z)\}.$

From $(\mathscr{D}_{2})$, we know that there is a positive constant $M$ such that
\be \label{kkl-2} \left(\int_{0}^{2\pi}\left(P[|f|](ze^{i\theta})-|f(ze^{i\theta})|\right)^{p}d\theta\right)^{\frac{1}{p}}
\leq M\omega(d_{\mathbb{D}}(z)).\ee
Since $|f|\in\Lambda_{\omega,p}(\mathbb{T})$
and $\omega$ is a regular majorant,
by Theorem \ref{thm-5.0} (or Corollary \ref{cor-0.11}), we see that
that there is a positive constant $M$ such that
\be \label{kkl-3} \left(\int_{0}^{2\pi}\left|P[|f|](we^{i\theta})-P[|f|](ze^{i\theta})\right|^{p}d\theta\right)^{\frac{1}{p}}
\leq M\omega(d_{\mathbb{D}}(z)),\ee
for $w\in\{\varsigma:~|\varsigma-z|\leq d_{\mathbb{D}}(z)\}.$
Combining (\ref{kkl-1}), (\ref{kkl-2}), (\ref{kkl-3}) and the Minkowski inequality gives that
\beqq
\left(\int_{0}^{2\pi}\left\{|f(we^{i\theta})|-|f(ze^{i\theta})|\right\}_{+}^{p}d\theta\right)^{\frac{1}{p}}
\leq 2M\omega(d_{\mathbb{D}}(z)),
\eeqq
for $w\in\{\varsigma:~|\varsigma-z|\leq d_{\mathbb{D}}(z)\}.$ Arguing as in the proof of the Step 5 of Theorem \ref{thm-1.0},
we  have $f\in\Lambda_{\omega,p}(\mathbb{D})$.

$\mathbf{Case~2.}$  $p=\infty$. The proof method used to prove Case 1 is still valid for Case 2.
We only need to replace some formulas when $p\in[1,\infty)$ with corresponding formulas when $p=\infty$ to complete the proof. Therefore, we omit it here.

The proof of this theorem is finished.
\qed

 \subsection{The proof of Theorem \ref{thm-8}} We only need to prove the case $p\in[2,\infty)$ because the case $p=\infty$ follows from Theorem C.
  We first prove the necessity. For $z\in\mathbb{D}$,
let
$$\mathscr{S}_{p}(z)=\left(\int_{0}^{2\pi}\left(P[|f|^{2}](ze^{i\eta})-|f(ze^{i\eta})|^{2}\right)^{\frac{p}{2}}d\eta\right)^{\frac{1}{p}}.$$
Since $f\in\Lambda_{\omega,p}(\overline{\mathbb{D}})$, we see that there is a positive constant $M$ such that
\be\label{xx-1}
\int_{0}^{2\pi}|f(e^{i(\tau+\eta)})-f(ze^{i\eta})|^{p}d\eta\leq M^{p}\big(\omega(|z-e^{i\tau}|)\big)^{p}.
\ee
Then, by (\ref{xx-1}) and Lemma F, there is a positive constant $M$ such that
\beq \nonumber
\mathscr{S}_{p}(z)&=&\left(\int_{0}^{2\pi}\left(\int_{0}^{2\pi}|f(e^{i(\tau+\eta)})-f(ze^{i\eta})|^{2}\mathbf{P}(z,e^{i\tau})
\frac{d\tau}{2\pi}\right)^{\frac{p}{2}}d\eta\right)^{\frac{1}{p}}\\ \nonumber
&\leq&\left(\int_{0}^{2\pi}\mathbf{P}(z,e^{i\tau})\left(\int_{0}^{2\pi}|f(e^{i(\tau+\eta)})-f(ze^{i\eta})|^{p}d\eta\right)^{\frac{2}{p}}\frac{d\tau}{2\pi}\right)^{\frac{1}{2}}\\
&\leq&M\left(\int_{0}^{2\pi}\mathbf{P}(z,e^{i\tau})\big(\omega(|z-e^{i\tau}|)\big)^{2}\frac{d\tau}{2\pi}\right)^{\frac{1}{2}}.
\label{xx-2}
\eeq
On the other hand, for $z\neq0$ and $\widetilde{z}=z/|z|$, it follows from (\ref{lk-0.1}) that
\beq\label{xx-3}
\big(\omega(|z-e^{i\tau}|)\big)^{2}&\leq&\left(\omega(|\widetilde{z}-e^{i\tau}|)+\omega(|z-\widetilde{z}|)\right)^{2}\\ \nonumber
&\leq&2\left(\big(\omega(|\widetilde{z}-e^{i\tau}|)\big)^{2}+\big(\omega(d_{\mathbb{D}}(z))\big)^{2}\right).
\eeq
Combining (\ref{xx-2}), (\ref{xx-3}) and \cite[Lemma 2]{Dy1}
yields that
there is a positive constant $M$
such that $$ \left(\int_{0}^{2\pi}\left(P[|f|^{2}](ze^{i\theta})-|f(ze^{i\theta})|^{2}\right)^{\frac{p}{2}}d\theta\right)^{\frac{1}{p}}
\leq M\omega(d_{\mathbb{D}}(z)).$$

Next, we prove the sufficiency. For $z\in\mathbb{D}$ and fixed $\eta\in[0,2\pi]$, it follows from the Cauchy integral formula and the Cauchy-Schwarz inequality that
\beq\label{xx-4} \nonumber
d_{\mathbb{D}}(z)|f'(ze^{i\eta})|&=&\left|\int_{|\zeta|=1}\left(f(\zeta e^{i\eta})-f(ze^{i\eta})\right)\frac{d_{\mathbb{D}}(z)}{(\zeta-z)^{2}}\frac{d\zeta}{2\pi i}\right|\\ \nonumber
&\leq&\int_{0}^{2\pi}|f(\zeta e^{i\eta})-f(ze^{i\eta})|\mathbf{P}(z,e^{i\tau})\frac{d\tau}{2\pi}\\
&\leq&\left(\int_{0}^{2\pi}|f(\zeta e^{i\eta})-f(ze^{i\eta})|^{2}\mathbf{P}(z,e^{i\tau})\frac{d\tau}{2\pi}\right)^{\frac{1}{2}},
\eeq where $\zeta=e^{i\tau}$.
For $z\in\mathbb{D}$, let $$\mathscr{Q}_{p}(z)=d_{\mathbb{D}}(z)\left(\int_{0}^{2\pi}|f'(ze^{i\eta})|^{p}d\eta\right)^{\frac{1}{p}}.$$
Then, by (\ref{xx-2}) and (\ref{xx-4}), we see that there is a positive constant $M$ such that
\beqq\mathscr{Q}_{p}(z)
&\leq&
\left(\int_{0}^{2\pi}\left(\int_{0}^{2\pi}|f(e^{i(\tau+\eta)})-f(ze^{i\eta})|^{2}\mathbf{P}(z,e^{i\tau})\frac{d\tau}{2\pi}\right)^{\frac{p}{2}}d\eta\right)^{\frac{1}{p}}\\
&=&\mathscr{S}_{p}(z)\\
&\leq&M\omega(d_{\mathbb{D}}(z)),
\eeqq
which, together with Corollary \ref{An-th-0.1}, gives that $f\in\Lambda_{\omega,p}(\overline{\mathbb{D}})$. The proof of this theorem is complete.
\qed

 \section{Applications of equivalent norms and Hardy-Littlewood type theorems}\label{sec3}

A continuous non-decreasing function $\psi:~[0,1)\rightarrow(0,\infty)$ is called a
 weight if $\psi$ is  unbounded (see \cite{AD}). Moreover,
 a weight  $\psi$ is called doubling if there is a constant $M>1$
such that
$$\psi(1-s/2)< M\psi(1-s)$$
for $s\in(0,1]$.
The following result easily follows from \cite[Lemma 1]{AD} and
\cite[Theorem 2]{AD}.

\begin{Lem}\label{lem-0.2}
Let  $\psi$ be a doubling function. Then there exist holomorphic  functions
$f_{j}~(j\in\{1,2\})$
with
\[
\sup_{z\in \mathbb{D}}\frac{|f_j'(z)|}{\psi(|z|)}<\infty
\]
such that
for $z\in\mathbb{D}$,
\[
\sum_{j=1}^{2}|f_{j}'(z)|\geq\psi(|z|).
\]
\end{Lem}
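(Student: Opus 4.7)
The plan is to deduce Lemma~\ref{lem-0.2} from the cited results of \cite{AD} together with an explicit lacunary power-series construction. By \cite[Theorem 2]{AD}, for any doubling weight $\psi$ there exists a holomorphic function $F$ on $\mathbb{D}$ satisfying both the pointwise upper bound $|F'(z)| \leq C\psi(|z|)$ and a matching lower bound for the maximal modulus on each circle, namely $\max_{|\zeta|=r}|F'(\zeta)| \geq c\psi(r)$ for $r \in [0,1)$. The construction takes $F$ as a lacunary power series: one fixes a lacunary exponent sequence $\{k_n\}$ with $k_{n+1}/k_n \geq \lambda > 1$ and associated radii $r_n = 1 - 1/k_n$, and sets $F(z) = \sum_n a_n z^{k_n}$ with $a_n$ essentially proportional to $\psi(r_n)/k_n$. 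The upper bound follows from the standard peaking estimate for lacunary series: at any $|z| = r$ only boundedly many terms contribute significantly to $F'(z)$, and each is controlled by $\psi(r)$ via the doubling property, together with the fact that $r^{k_n}$ achieves its maximum around $r \approx r_n$.

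To obtain the pointwise lower bound $|f_1'(z)| + |f_2'(z)| \geq \psi(|z|)$, a second holomorphic function is needed to compensate for the possible isolated zeros of $F'$. The plan is to set $f_1 = F$ and construct $f_2$ as another holomorphic function sharing the same block structure but with modified phases or shifted exponents (for instance, $f_2(z) = \sum_n b_n z^{k_n+1}$ with $|b_n|$ comparable to $|a_n|$, the $b_n$ chosen so that $f_2$ is not merely a scalar multiple of $zf_1$), arranged so that the dominant-term cancellations producing zeros of $f_1'$ at a point $z \in \mathbb{D}$ cannot simultaneously produce cancellations in $f_2'(z)$. This is precisely the construction underlying \cite[Lemma 1]{AD}, where a pair of holomorphic functions is built so that at each $z$ the dominant lacunary term of at least one of $f_1', f_2'$ survives the sum.

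The main obstacle is the pointwise lower bound. While each $|f_j'(z)| \leq C\psi(|z|)$ is straightforward from the lacunary-series estimate, the inequality $\sum_{j=1}^2 |f_j'(z)| \geq \psi(|z|)$ requires verifying that at no point of $\mathbb{D}$ can all dominant terms of both series cancel simultaneously; this is a delicate phase-separation statement, handled in \cite{AD} by a careful choice of the coefficients (and, if needed, of the exponent shifts). Once this is established, the conclusion follows immediately, with the final upper-bound constant being the maximum of the two constants produced for $f_1$ and $f_2$.
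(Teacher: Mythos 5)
Your proposal is correct and takes essentially the same route as the paper: the paper gives no argument beyond noting that the lemma ``easily follows from'' \cite[Lemma 1]{AD} and \cite[Theorem 2]{AD}, which are precisely the two results you invoke, applied so that the Abakumov--Doubtsov reverse estimate for the doubling weight $\psi$ yields two holomorphic functions whose derivatives satisfy the stated two-sided bounds. The additional detail you supply about the lacunary construction is a faithful sketch of what happens inside \cite{AD}, and the ``phase-separation'' step you correctly flag as the crux is carried entirely by the cited results, exactly as in the paper.
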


\begin{LemG}{\rm (\cite[Lemma 2]{P})}\label{L-3}
Suppose that $f\in \mathscr{A}(\mathbb{D})$. For $z\in\mathbb{D}$, let $D_{z}=\{w:~|w-z|\leq d_{\mathbb{D}}(z)\}$ and
$M_{z}=\max\{|f(w)|:~w\in D_{z}\}$. Then, for $z\in\mathbb{D}$,
$$\frac{1}{2}(1-|z|)|f'(z)|+|f(z)|\leq\,M_{z}.$$
\end{LemG}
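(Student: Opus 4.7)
The plan is to rescale $f$ so that it becomes a self-map of the unit disk and then apply the classical Schwarz--Pick inequality at the origin. Fix $z\in\mathbb{D}$ and write $r=1-|z|=d_{\mathbb{D}}(z)$; note that $D_z=\overline{\mathbb{D}(z,r)}\subset\overline{\mathbb{D}}$ since the circle $|w-z|=r$ is tangent to $\mathbb{T}$ from inside. If $M_z=0$, then $f\equiv 0$ on $D_z$, so $f'(z)=0$ and the inequality is trivial. Otherwise set
$$F(\zeta)=\frac{f(z+r\zeta)}{M_z},\quad \zeta\in\mathbb{D}.$$
By the very definition of $M_z$ together with continuity of $f$ on $\overline{\mathbb{D}}$, one has $|F(\zeta)|\le 1$ for every $\zeta\in\mathbb{D}$, so $F$ is a holomorphic self-map of the unit disk.

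The Schwarz--Pick lemma applied to $F$ at $\zeta=0$ yields $|F'(0)|\le 1-|F(0)|^2$. Substituting $F'(0)=rf'(z)/M_z$ and $F(0)=f(z)/M_z$, this becomes
$$r|f'(z)|\le M_z-\frac{|f(z)|^2}{M_z}=\frac{(M_z-|f(z)|)(M_z+|f(z)|)}{M_z}\le 2\bigl(M_z-|f(z)|\bigr),$$
where the last step uses $M_z+|f(z)|\le 2M_z$. Rearranging produces $\frac{r}{2}|f'(z)|+|f(z)|\le M_z$, which is exactly the claimed estimate.

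There is no real obstacle; the one place that requires care is identifying the correct rescaling factor $r=1-|z|$ so that $D_z$ is mapped onto the closed unit disk and simultaneously $F$ remains bounded by $1$. Once this is set up, Schwarz--Pick (equivalently, Schwarz's lemma after post-composing with the Möbius map sending $F(0)$ to $0$) is the only analytic input, and the factor $\tfrac{1}{2}$ in the inequality is the exact constant coming from the elementary bound $M_z+|f(z)|\le 2M_z$.
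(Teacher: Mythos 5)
Your proof is correct. The paper does not prove Lemma G itself (it is quoted from Pavlovi\'c), but your argument --- normalizing by $M_z$, rescaling so that $D_z$ becomes the closed unit disk, applying Schwarz--Pick at the origin, and then using $M_z+|f(z)|\le 2M_z$ to pass from $1-|F(0)|^2$ to $2(1-|F(0)|)$ --- is exactly the device the paper uses to prove its harmonic quasiregular analogue, Lemma \ref{mate-07b}, so it is essentially the same approach.
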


\begin{Lem}\label{lem-0.4}
Let $\omega_{1}$ be a majorant such that $\omega_{1}$ is differentiable on $(0,1
]$ and $\omega_{1}'$ is also non-increasing on $(0,1
]$, and let  $\omega_{2}$ be a fast majorant. For $p\in[1,\infty]$, if $\phi$
is a holomorphic function of $\mathbb{D}$ into itself,
then
$\omega_{1}(d_{\mathbb{D}}(\phi))\in\Lambda_{\omega_{2},p}(\mathbb{D})$
if and only if there is a positive constant $M$ such that
for $z\in\mathbb{D}$,
\be\label{lem-3j}
\left\{
 \begin{array}{ll}
 \left(\int_{0}^{2\pi}\left(\mathscr{M}_{\omega_{1}(d_{\mathbb{D}}(\phi))}(ze^{i\theta})\right)^{p}d\theta\right)^{\frac{1}{p}}
\leq\,M\frac{\omega_{2}(d_{\mathbb{D}}(z))}{d_{\mathbb{D}}(z)}, &p\in[1,\infty),
\\
\mathscr{M}_{\omega_{1}(d_{\mathbb{D}}(\phi))}(z)
\leq\,M\frac{\omega_{2}(d_{\mathbb{D}}(z))}{d_{\mathbb{D}}(z)}, &p=\infty.
\end{array}
\right.
\ee

\end{Lem}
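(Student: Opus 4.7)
My plan is to establish the equivalence by proving the two implications separately, paralleling the structure of the proof of Theorem \ref{thm-1.0}. Write $u:=\omega_{1}(d_{\mathbb{D}}(\phi))$; where $\phi(z)\ne 0$ the chain rule gives $\mathscr{M}_{u}(z)=\omega_{1}'(1-|\phi(z)|)|\phi'(z)|$, and a direct computation exploiting the holomorphy of $\phi$ yields the structural equality $|u_{z}|=|u_{\bar z}|$.

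For the implication from (\ref{lem-3j}) to $u\in\Lambda_{\omega_{2},p}(\mathbb{D})$, I would follow Step 2 of the proof of Theorem \ref{thm-1.0}. Since $\omega_{2}$ is a fast majorant, $\mathbb{D}$ is a $\Lambda_{\omega_{2}}$-extension domain, so any two points $z_{1},z_{2}\in\mathbb{D}$ can be joined by a rectifiable curve $\gamma\subset\mathbb{D}$ satisfying $\int_{\gamma}\omega_{2}(d_{\mathbb{D}}(\zeta))/d_{\mathbb{D}}(\zeta)\,ds(\zeta)\le M\omega_{2}(|z_{1}-z_{2}|)$. Applying the fundamental theorem of calculus along the rotated curve $\gamma e^{i\eta}$ gives $|u(z_{1}e^{i\eta})-u(z_{2}e^{i\eta})|\le\int_{\gamma}\mathscr{M}_{u}(\zeta e^{i\eta})\,ds(\zeta)$; Lemma F (Minkowski) combined with the hypothesis (\ref{lem-3j}) then yields $\mathcal{L}_{p}[u](z_{1},z_{2})\le M^{2}\omega_{2}(|z_{1}-z_{2}|)$.

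For the converse, the plan is to combine Schwarz--Pick $|\phi'(z)|\le(1-|\phi(z)|^{2})/(1-|z|^{2})$ with the concavity bound $t\omega_{1}'(t)\le\omega_{1}(t)$ (which follows from $\omega_{1}'$ being non-increasing and $\omega_{1}(0)=0$) to first obtain the pointwise estimate $\mathscr{M}_{u}(z)\le 2u(z)/d_{\mathbb{D}}(z)$. To sharpen this into (\ref{lem-3j}), I fix $z\in\mathbb{D}$ and set $\rho:=d_{\mathbb{D}}(z)/4$; testing the $L^{p}$-Lipschitz condition for $u$ against each perturbation $z\mapsto z+\rho e^{i\alpha}$, integrating over $\alpha\in[0,2\pi]$ and applying Fubini produces
\[
\int_{0}^{2\pi}\!\int_{0}^{2\pi}\!|u(ze^{i\theta}+\rho e^{i\beta})-u(ze^{i\theta})|^{p}\,d\beta\,d\theta\le 2\pi M^{p}\omega_{2}(\rho)^{p}.
\]
The identity $|u_{z}|=|u_{\bar z}|$ translates into $|\partial_{\beta}u(w)|=\mathscr{M}_{u}(w)|\cos(\beta+\psi(w))|$ for a phase $\psi(w)$, so the inner $\beta$-integral is asymptotically a positive multiple of $\rho^{p}\mathscr{M}_{u}(ze^{i\theta})^{p}$; combining with the monotonicity of $t\mapsto\omega_{2}(t)/t$ yields (\ref{lem-3j}). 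The principal obstacle is making this last comparison precise at the finite scale $\rho\asymp d_{\mathbb{D}}(z)$ rather than in the infinitesimal limit $\rho\downarrow 0$; here the holomorphy of $\phi$ is essential, as Cauchy's derivative inequalities for $\phi'$ and $\phi''$ on the disk of radius $\rho$ centered at $ze^{i\theta}$, together with the concavity of $\omega_{1}$ (which keeps $\omega_{1}'(1-|\phi(w)|)$ comparable to $\omega_{1}'(1-|\phi(ze^{i\theta})|)$ on that disk), furnish the required uniform control of the quadratic Taylor remainder of $u$.
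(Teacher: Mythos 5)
Your forward implication (from (\ref{lem-3j}) to $\omega_{1}(d_{\mathbb{D}}(\phi))\in\Lambda_{\omega_{2},p}(\mathbb{D})$) is exactly the paper's argument: the chain-rule identity $\mathscr{M}_{\omega_{1}(d_{\mathbb{D}}(\phi))}=|\phi'|\,\omega_{1}'(d_{\mathbb{D}}(\phi))$, the $\Lambda_{\omega_{2}}$-extension property of $\mathbb{D}$ for the fast majorant $\omega_{2}$, and Minkowski's inequality in integral form (Lemma F). That half is fine.

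The converse is where there is a genuine gap. You need, at the finite scale $\rho\asymp d_{\mathbb{D}}(z)$, a lower bound of the circle oscillation $\int_{0}^{2\pi}|u(w+\rho e^{i\beta})-u(w)|^{p}\,d\beta$ by a positive multiple of $\rho^{p}\mathscr{M}_{u}(w)^{p}$, and you propose to extract it from the first-order Taylor term after controlling the quadratic remainder via Cauchy estimates for $\phi''$ together with ``comparability'' of $\omega_{1}'$ over the disk. Neither ingredient works. Cauchy's inequalities only give $\sup_{\mathbb{D}(w,\rho)}|\phi''|\lesssim\rho^{-2}$ (since $|\phi|\le 1$), so the remainder is of size $O(\rho^{2}\cdot\rho^{-2}\cdot\omega_{1}')=O(\omega_{1}')$, which has to be compared with the linear term $\rho|\phi'(w)|\,\omega_{1}'$; since in general $\rho|\phi'(w)|\ll 1$, the remainder is not dominated by the main term and the expansion yields nothing. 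Moreover, the hypotheses on $\omega_{1}$ (only that $\omega_{1}'$ is non-increasing; no doubling is assumed) do not make $\omega_{1}'(1-|\phi(w')|)$ comparable to $\omega_{1}'(1-|\phi(w)|)$ even though $1-|\phi(w')|\asymp 1-|\phi(w)|$ by Schwarz--Pick: a majorant whose slope drops abruptly between comparable arguments is a counterexample. The paper's proof avoids both difficulties. It first applies the mean value theorem with the monotonicity of $\omega_{1}'$ to get the one-sided pointwise bound $\{|\phi(w)|-|\phi(z)|\}_{+}\,\omega_{1}'(d_{\mathbb{D}}(\phi(z)))\le|\omega_{1}(d_{\mathbb{D}}(\phi(z)))-\omega_{1}(d_{\mathbb{D}}(\phi(w)))|$, so that $\omega_{1}'$ is only ever evaluated at the single point $d_{\mathbb{D}}(\phi(z))$; it then invokes Lemma \ref{mate-07d} (built from the Schwarz--Pick-type estimate of Lemma \ref{mate-07b} and the subharmonicity of $\{|\phi(\cdot)|-|\phi(z)|\}_{+}^{p}$), which bounds $\mathscr{M}_{\phi}^{p}(z)$ by an area average of $\{|\phi(w)|-|\phi(z)|\}_{+}^{p}$ over $\mathbb{D}(z,d_{\mathbb{D}}(z)/2)$. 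That subharmonicity/area-integral mechanism is the missing idea; the circle-Taylor route cannot be completed as you describe it.
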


 \begin{proof}
 Without loss of generality,  we assume that $\phi$ is non-constant and $p\in[1,\infty)$.
We first prove the sufficiency. For $z\in\mathbb{D}$, elementary calculations give
$$
\left|\frac{\partial\omega_{1}(d_{\mathbb{D}}(\phi(z)))}{\partial z}\right|=
\left|\frac{\partial d_{\mathbb{D}}(\phi(z))}{\partial z}\right|\omega_{1}'(d_{\mathbb{D}}(\phi(z)))=\frac{|\phi'(z)|\omega_{1}'(d_{\mathbb{D}}(\phi(z)))}{2}
$$
and
$$
\left|\frac{\partial\omega_{1}(d_{\mathbb{D}}(\phi(z)))}{\partial \overline{z}}\right|=
\left|\frac{\partial d_{\mathbb{D}}(\phi(z))}{\partial \overline{z}}\right|\omega_{1}'(d_{\mathbb{D}}(\phi(z)))=\frac{|\phi'(z)|\omega_{1}'(d_{\mathbb{D}}(\phi(z)))}{2},
$$
which gives that
\be\label{eq-t6}
\mathscr{M}_{\omega_{1}(d_{\mathbb{D}}(\phi))}(z)=|\phi'(z)|\omega_{1}'
(d_{\mathbb{D}}(\phi(z))),
\quad
z\in \mathbb{D}.
\ee
Since $\mathbb{D}$ is a $\Lambda_{\omega_{2}}$-extension domain of $\mathbb{C}$ for fast majorant $\omega_{2}$,
we see that, for $z,~w\in\mathbb{D}$, there is a rectifiable curve $\gamma\subset\mathbb{D}$ joining $z$ to $w$ such that
\be\label{eq-t7}\int_{\gamma}\frac{\omega_{2}(d_{\mathbb{D}}(\zeta))}{d_{\mathbb{D}}(\zeta)}ds(\zeta)\leq M\omega_{2}(|z-w|),\ee
where $M$ is a positive constant.

By (\ref{lem-3j}), (\ref{eq-t6}), (\ref{eq-t7}) and Lemma F,  we conclude that, for $z,~w\in\mathbb{D}$, there is a rectifiable
curve $\gamma\subset\mathbb{D}$ joining $z$ to $w$ such that
\beqq
I(z,w)&\leq&
\left(\int_{0}^{2\pi}\left(\int_{\gamma}\mathscr{M}_{\omega_{1}(d_{\mathbb{D}}(\phi))}(e^{i\theta}\zeta)ds(\zeta)\right)^{p}d\theta\right)^{\frac{1}{p}}\\
&\leq&\int_{\gamma}\left(\int_{0}^{2\pi}\left(\mathscr{M}_{\omega_{1}(d_{\mathbb{D}}(\phi))}(\zeta e^{i\theta})\right)^{p}d\theta\right)^{\frac{1}{p}}ds(\zeta)\\
&\leq&M\int_{\gamma}\frac{\omega_{2}(d_{\mathbb{D}}(\zeta))}{d_{\mathbb{D}}(\zeta)}ds(\zeta)\\
&\leq&M^{2}\omega_{2}(|z-w|),
\eeqq
where $$I(z,w)=\left(\int_{0}^{2\pi}\left|\omega_{1}(d_{\mathbb{D}}(\phi(ze^{i\theta})))-
\omega_{1}(d_{\mathbb{D}}(\phi(we^{i\theta})))\right|^{p}d\theta\right)^{\frac{1}{p}}.$$

Next, we prove the necessity. For $a,b\in [0,1]$ with $b>a$, it follows from the Lagrange mean value theorem that there is an $a_{0}\in(a,b)$
such that
\be\label{eq-t4} \frac{\omega_{1}(a)-\omega_{1}(b)}{a-b}=\omega_{1}'(a_{0})\geq\omega_{1}'(b).\ee
Since $\omega_{1}(d_{\mathbb{D}}(\phi))\in\Lambda_{\omega_{2},p}(\mathbb{D})$, we see that
there is a positive constant $M$ such that
\be\label{eq-t1}\mathscr{J}_{6}
\leq\,M\omega_{2}(|z-w|)
\ee for $z,w\in{\mathbb{D}}$,
where $$\mathscr{J}_{6}=\left(\int_{0}^{2\pi}\left|\omega_{1}(d_{\mathbb{D}}(\phi(ze^{i\theta})))-
\omega_{1}(d_{\mathbb{D}}(\phi(we^{i\theta})))\right|^{p}d\theta\right)^{\frac{1}{p}}.$$
By  (\ref{eq-t4}) and (\ref{eq-t1}), we have
\[
\mathscr{J}_{7}
\leq\,M\omega_{2}(|z-w|),~
\]
for $z,w\in{\mathbb{D}}$,
where $$\mathscr{J}_{7}=\left(\int_{0}^{2\pi}\left(\left\{
|\phi(we^{i\theta})|-|\phi(ze^{i\theta})|
\right\}_{+}\omega_{1}'(d_{\mathbb{D}}(\phi(ze^{i\theta})))\right)^{p}d\theta\right)^{\frac{1}{p}}.$$

Then, for $z,w\in {\mathbb{D}}$ with
$|w-z|\leq d_{\mathbb{D}}(z)/2$,
we have
\be\label{eq-t2b}
\mathscr{J}_{7}
\leq\,M\omega_{2}(d_{\mathbb{D}}(z)).
\ee
Let $$\mathscr{W}(z)=\int_{0}^{2\pi}\left(\mathscr{M}_{\omega_{1}(d_{\mathbb{D}}(\phi))}(ze^{i\theta})\right)^{p}d\theta$$
and
$$\mathscr{J}_{8}(w,ze^{i\theta})=\{|\phi(w)|-|\phi(ze^{i\theta})|\}_+ \omega_{1}'(d_{\mathbb{D}}(\phi(ze^{i\theta}))).$$
For a fixed $z\in \mathbb{D}$  and
 $\varepsilon_0=d_{\mathbb{D}}(z)/2$,
it follows from Lemma \ref{mate-07d} that
there is a positive constant $C$ such that
\begin{eqnarray*}
\mathscr{W}(z)
&\leq&
\int_{0}^{2\pi}\frac{C}{\varepsilon_0^{p+2}}
\int_{\mathbb{D}(ze^{i\theta},\varepsilon_0)}\left(\mathscr{J}_{8}(w,ze^{i\theta})\right)^p dm(w)d\theta
\\
&=&
\int_{0}^{2\pi}\frac{C}{\varepsilon_0^{p+2}}
\int_{\mathbb{D}(z,\varepsilon_0)}\left(\mathscr{J}_{8}(we^{i\theta},ze^{i\theta})\right)^p dm(w)d\theta
\\
&=&
\frac{C}{\varepsilon_0^{p+2}}
\int_{\mathbb{D}(z,\varepsilon_0)}
\int_{0}^{2\pi}\left(\mathscr{J}_{8}(we^{i\theta},ze^{i\theta})\right)^p d\theta dm(w),
\end{eqnarray*}
which, together with (\ref{eq-t2b}), yields that
\beqq
\mathscr{W}(z)&\leq&
\frac{C}{\varepsilon_0^{p+2}}
\int_{\mathbb{D}(z,\varepsilon_0)}
M^p\omega_{2}(d_{\mathbb{D}}(z))^pdm(w)
\\
&\leq&
\frac{C}{\varepsilon_0^{p}}
M^p\big(\omega_{2}(d_{\mathbb{D}}(z))\big)^p
\\
&\leq&2^pCM^p
\frac{\big(\omega_{2}(d_{\mathbb{D}}(z))\big)^p}{\big(d_{\mathbb{D}}(z)\big)^{p}}.
\eeqq
The proof of this lemma is complete.
 \end{proof}

 From the  necessity proof of Lemma \ref{lem-0.4}, we also  obtain the following result.
 Here we only need to assume that $\omega_{2}$ is a  majorant because the ``fast" condition only used in
 the sufficiency proof of Lemma \ref{lem-0.4}.

\begin{Lem}\label{cor-1.0}
Let $\omega_{1}$ be a majorant such that $\omega_{1}$ is differentiable on $(0,1
]$ and $\omega_{1}'$ is also non-increasing on $(0,1
]$, and let  $\omega_{2}$ be a  majorant.
Suppose that  $\phi$
is a holomorphic function of $\mathbb{D}$ into itself.
For $p\in[1,\infty]$, if
$$
\left\{
 \begin{array}{ll}
\chi_{\omega_{1}}(z,w)
\leq\omega_{2}(d_{\mathbb{D}}(z)), &p\in[1,\infty),
\\
\left\{\omega_{1}(d_{\mathbb{D}}(\phi(z)))-
\omega_{1}(d_{\mathbb{D}}(\phi(w)))\right\}_{+}
\leq\omega_{2}(d_{\mathbb{D}}(z)), &p=\infty.
\end{array}
\right.
$$ whenever $z\in\mathbb{D}$ and $w\in\{\varsigma\in\mathbb{D}:~|\varsigma-z|\leq\,d_{\mathbb{D}}(z)/2\}$,
where $$\chi_{\omega_{1}}(z,w)=\left(\int_{0}^{2\pi}\left\{\omega_{1}(d_{\mathbb{D}}(\phi(ze^{i\theta})))-
\omega_{1}(d_{\mathbb{D}}(\phi(we^{i\theta})))\right\}_{+}^{p}d\theta\right)^{\frac{1}{p}},$$
then there is a positive constant $M$ such that
for $z\in \mathbb{D}$,
$$
\left\{
 \begin{array}{ll}
\left(\int_{0}^{2\pi}\left(\mathscr{M}_{\omega_{1}(d_{\mathbb{D}}(\phi))}(ze^{i\theta})\right)^{p}d\theta\right)^{\frac{1}{p}}
\leq\,M\frac{\omega_{2}(d_{\mathbb{D}}(z))}{d_{\mathbb{D}}(z)}, &p\in[1,\infty),
\\
\mathscr{M}_{\omega_{1}(d_{\mathbb{D}}(\phi))}(z)
\leq\,M\frac{\omega_{2}(d_{\mathbb{D}}(z))}{d_{\mathbb{D}}(z)}, &p=\infty.
\end{array}
\right.
$$
\end{Lem}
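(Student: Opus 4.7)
The plan is to reread the necessity half of the proof of Lemma \ref{lem-0.4} and note that it never uses that $\omega_{2}$ is fast; only the sufficiency half does. I would carry out the argument for $p \in [1,\infty)$, the case $p = \infty$ being an entirely parallel (and simpler) variant.

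First I would record the chain-rule identity
\[
\mathscr{M}_{\omega_{1}(d_{\mathbb{D}}(\phi))}(z) = |\phi'(z)|\,\omega_{1}'(d_{\mathbb{D}}(\phi(z)))
\]
from (\ref{eq-t6}), so that the task reduces to bounding $|\phi'|^{p}(\omega_{1}'\circ d_{\mathbb{D}}\circ \phi)^{p}$ integrated around circles centered at the origin. Next, I would use the Lagrange mean value theorem together with the non-increasing property of $\omega_{1}'$ as in (\ref{eq-t4}) --- namely $\omega_{1}(b) - \omega_{1}(a) \geq \omega_{1}'(b)(b-a)$ for $0 \leq a < b \leq 1$ --- applied to $a = d_{\mathbb{D}}(\phi(we^{i\theta}))$ and $b = d_{\mathbb{D}}(\phi(ze^{i\theta}))$, which translates the assumed bound on $\chi_{\omega_{1}}(z,w)$ into
\[
\left(\int_{0}^{2\pi}\bigl(\omega_{1}'(d_{\mathbb{D}}(\phi(ze^{i\theta})))\,\{|\phi(we^{i\theta})| - |\phi(ze^{i\theta})|\}_{+}\bigr)^{p}\,d\theta\right)^{1/p} \leq \omega_{2}(d_{\mathbb{D}}(z))
\]
for every $w$ with $|w-z| \leq d_{\mathbb{D}}(z)/2$.

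The final step is the local-to-pointwise passage via Lemma \ref{mate-07d}. Since $\phi$ is holomorphic, hence $1$-quasiregular with $\mathscr{M}_{\phi} = |\phi'|$, that lemma applied with $\varepsilon = d_{\mathbb{D}}(z)/2$ yields
\[
|\phi'(ze^{i\theta})|^{p} \leq \frac{C}{\varepsilon^{p+2}}\int_{\mathbb{D}(ze^{i\theta},\varepsilon)}\{|\phi(\zeta)| - |\phi(ze^{i\theta})|\}_{+}^{p}\,dm(\zeta).
\]
I would multiply both sides by the $\theta$-dependent factor $\omega_{1}'(d_{\mathbb{D}}(\phi(ze^{i\theta})))^{p}$, perform the rotational change of variable $\zeta = we^{i\theta}$ (Jacobian $1$) to rewrite the area integral over $\mathbb{D}(z,\varepsilon)$, integrate in $\theta$, and swap the order of integration by Fubini. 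The resulting inner $\theta$-integral is precisely the one controlled in the previous step by a constant multiple of $\omega_{2}(d_{\mathbb{D}}(z))^{p}$; the trivial volume factor $|\mathbb{D}(z,\varepsilon)| \sim \varepsilon^{2}$ then produces
\[
\int_{0}^{2\pi}\bigl(\mathscr{M}_{\omega_{1}(d_{\mathbb{D}}(\phi))}(ze^{i\theta})\bigr)^{p}\,d\theta \leq M\,\frac{\omega_{2}(d_{\mathbb{D}}(z))^{p}}{d_{\mathbb{D}}(z)^{p}}.
\]
The main (and really only nontrivial) obstacle is the rotational-substitution/Fubini bookkeeping: one must verify that the rotation truly carries $\mathbb{D}(ze^{i\theta},\varepsilon)$ onto $\mathbb{D}(z,\varepsilon)$ and that the $\theta$-dependence of the integrand is packaged so that the previous paragraph's pointwise-in-$w$ estimate can be applied uniformly inside the area integral. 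For $p = \infty$, I would simply replace Lemma \ref{mate-07d} by its pointwise predecessor Lemma \ref{mate-07b}, which directly bounds $|\phi'(z)|$ in terms of $\sup_{|w-z|<\varepsilon}\{|\phi(w)| - |\phi(z)|\}_{+}$, and the conclusion follows by applying the $p=\infty$ form of the hypothesis at a near-extremizer $w$ in the closed disk $\overline{\mathbb{D}(z,\varepsilon)}$.
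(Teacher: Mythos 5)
Your proposal is correct and follows essentially the same route as the paper, which simply observes that Lemma \ref{cor-1.0} is the necessity half of the proof of Lemma \ref{lem-0.4} (chain rule identity (\ref{eq-t6}), the mean value inequality (\ref{eq-t4}) to convert the hypothesis on $\chi_{\omega_1}$ into the bound on $\int_0^{2\pi}\bigl(\{|\phi(we^{i\theta})|-|\phi(ze^{i\theta})|\}_+\,\omega_1'(d_{\mathbb{D}}(\phi(ze^{i\theta})))\bigr)^p d\theta$, then Lemma \ref{mate-07d} applied to the $1$-quasiregular map $\phi$ with $\varepsilon_0=d_{\mathbb{D}}(z)/2$, followed by the rotation change of variables and Fubini), where the fast condition on $\omega_2$ is never used. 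Your handling of the $p=\infty$ case via Lemma \ref{mate-07b} is also consistent with the paper's (omitted) argument.
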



 \begin{Lem}\label{le-9}
Suppose that $\omega$ is a majorant such that $\omega$ is differentiable on $(0,1
]$ and $\omega'$ is also non-increasing on $(0,1
]$.
Let $\phi$ be an analytic function of $\mathbb{D}$ into itself.
Then $\omega(d_{\mathbb{D}}(\phi))$ is superharmonic in $\mathbb{D}$.
\end{Lem}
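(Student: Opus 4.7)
The plan is to prove superharmonicity via the super-mean-value inequality, by combining two observations: $d_{\mathbb{D}}(\phi)$ is already superharmonic, and the hypothesis on $\omega$ makes it a concave non-decreasing function, so Jensen's inequality preserves the super-mean-value property under composition.

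First, set $u(z) := d_{\mathbb{D}}(\phi(z)) = 1 - |\phi(z)|$. Since $\phi$ is analytic on $\mathbb{D}$, the function $|\phi|$ is subharmonic on $\mathbb{D}$ in the classical sense (trivially at zeros of $\phi$, by the direct Laplacian computation $\Delta|\phi|=|\phi'|^{2}/|\phi|\ge 0$ away from zeros). Consequently $u$ is superharmonic on $\mathbb{D}$ and takes values in $[0,1]$, and satisfies the super-mean-value inequality: for every $z_{0}\in\mathbb{D}$ and every $r\in(0,d_{\mathbb{D}}(z_{0}))$,
\[
u(z_{0}) \;\ge\; \frac{1}{2\pi}\int_{0}^{2\pi} u(z_{0}+re^{i\theta})\,d\theta.
\]

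Next, since $\omega'$ is non-increasing on $(0,1]$, the function $\omega$ is concave on $(0,1]$. Combining this with continuity at $0$, $\omega(0)=0$, and the majorant condition that $\omega(t)/t$ is non-increasing (which gives the chord inequality $\omega(\lambda b)\ge\lambda\omega(b)$ for $\lambda\in[0,1]$ and $b\in(0,1]$), we obtain that $\omega$ is concave and non-decreasing on the whole interval $[0,1]$. Applying monotonicity of $\omega$ to the super-mean-value inequality for $u$ and then Jensen's inequality for the concave $\omega$ yields, for every $z_{0}\in\mathbb{D}$ and $r\in(0,d_{\mathbb{D}}(z_{0}))$,
\[
\omega(u(z_{0})) \;\ge\; \omega\!\left(\frac{1}{2\pi}\int_{0}^{2\pi} u(z_{0}+re^{i\theta})\,d\theta\right) \;\ge\; \frac{1}{2\pi}\int_{0}^{2\pi} \omega(u(z_{0}+re^{i\theta}))\,d\theta.
\]
Since $\omega\circ u$ is continuous (hence upper semi-continuous), this super-mean-value inequality on all sufficiently small circles at every point of $\mathbb{D}$ is exactly the definition of $\omega(d_{\mathbb{D}}(\phi))$ being superharmonic on $\mathbb{D}$.

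I expect no substantial obstacle. The only point requiring care is ensuring concavity extends to the endpoint $0$ (where $\omega$ is only assumed continuous, not differentiable), which is handled by the majorant property $\omega(t)/t$ non-increasing. A purely computational alternative would verify $\Delta(\omega(u))=\omega''(u)|\nabla u|^{2}+\omega'(u)\Delta u\le 0$ directly on $\{\phi\ne 0\}$ (using $\omega''\le 0$, $\omega'\ge 0$, $\Delta u\le 0$) and handle zeros of $\phi$ separately, but the Jensen approach is both cleaner and avoids any assumption of twice-differentiability of $\omega$.
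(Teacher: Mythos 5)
Your proof is correct and is essentially the paper's argument in mirror image: the paper composes the convex increasing function $\varphi(t)=-\omega(1-t)$ with the subharmonic function $|\phi|$ and verifies Jensen's inequality by hand via the supporting-line inequality from the mean value theorem, which is exactly your ``concave increasing $\omega$ composed with superharmonic $1-|\phi|$'' argument after a sign flip. Your extra care about concavity at the endpoint $0$ is a reasonable refinement but not a departure from the paper's route.
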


 \begin{proof}
  Without loss of generality,
we assume that $\phi$ is non-constant.
Since $\phi$ is analytic in $\mathbb{D}$, we see that
$|\phi(z)|$ is subharmonic in $\mathbb{D}$.
Let $\varphi(t)=-\omega(1-t)$ for $t\in (-\infty,1]$.
Then, $\varphi'(t)$ exists and is non-decreasing on $[0,1)$.
Let $t_0\in [0,1)$ be arbitrarily fixed.
For $0\leq t<t_0< 1$, it follows from the Lagrange mean value theorem that there is an $s\in(t,t_0)$
such that
\[
\frac{\varphi(t_0)-\varphi(t)}{t_0-t}=\varphi'(s)\leq \varphi'(t_0),
\]
which implies that
\begin{equation}
\label{varphi-1}
\varphi(t) \geq \varphi(t_0)+\varphi'(t_0)(t-t_0).
\end{equation}
For $0\leq t_0<t< 1$, it follows from the Lagrange mean value theorem that there is an $s\in(t_0,t)$
such that
\[
\frac{\varphi(t)-\varphi(t_0)}{t-t_0}=\varphi'(s)\geq \varphi'(t_0),
\]
which implies that
\begin{equation}
\label{varphi-2}
\varphi(t) \geq \varphi(t_0)+\varphi'(t_0)(t-t_0).
\end{equation}
From (\ref{varphi-1}) and (\ref{varphi-2}),
we have
\begin{equation}
\label{varphi-3}
\varphi(t) \geq \varphi(t_0)+\varphi'(t_0)(t-t_0),
\quad
t\in [0,1).
\end{equation}
For $z\in \mathbb{D}$, $r\in (0,d_{\mathbb{D}}(z))$
and $\zeta\in  \mathbb{T}$,
letting
$t=|\phi(z+r\zeta)|$,
\[
t_0=\int_{0}^{2\pi}|\phi(z+re^{i\tau})|\frac{d\tau}{2\pi}
\]
in the inequality (\ref{varphi-3}),
and integrating on $\mathbb{T}$,
we have
\[
\int_{\mathbb{T}} \varphi(|\phi(z+r\zeta)|)\frac{|d\zeta|}{2\pi}
\geq
\varphi\left( \int_{\mathbb{T}}|\phi(z+r\zeta)|\frac{|d\zeta|}{2\pi}\right),
\]
which, together with the subharmonicity of $|\phi|$ and the fact that
$\varphi$ is increasing, yields that
\[
\varphi(|\phi(z)|)
\leq
\varphi\left( \int_{\mathbb{T}}|\phi(z+r\zeta)|\frac{|d\zeta|}{2\pi}\right)
\leq
\int_{\mathbb{T}} \varphi(|\phi(z+r\zeta)|)\frac{|d\zeta|}{2\pi}.
\]
Consequently, $\varphi(|\phi(z)|)$ is subharmonic in $\mathbb{D}$, which implies that
 $\omega(d_{\mathbb{D}}(\phi))$ is superharmonic in $\mathbb{D}$.
The proof of this lemma is finished.
 \end{proof}

%

 The following result easily follows from Theorem \ref{thm-1.0}.

\begin{Lem}\label{L-1}
Let  $\omega$ be a fast majorant, and let $f\in\mathscr{H}(\mathbb{D})$.
Then $f\in\Lambda_{\omega}(\mathbb{D})$ if and only if there is a positive constant $M$ such that
$$\mathscr{M}_{f}(z)\leq\,M\frac{\omega(d_{\mathbb{D}}(z))}{d_{\mathbb{D}}(z)},
\quad
z\in \mathbb{D}.
$$
Moreover, there is a positive constant $M$  which is independent of $f$  such that
         $$\frac{1}{M}\|f\|_{\Lambda_{\omega}(\mathbb{D}),s}\leq
         \sup_{z\in\mathbb{D}}\left\{\mathscr{M}_{f}(z)
         \frac{d_{\mathbb{D}}(z)}{\omega(d_{\mathbb{D}}(z))}\right\}\leq\,M\|f\|_{\Lambda_{\omega}(\mathbb{D}),s}.$$
\end{Lem}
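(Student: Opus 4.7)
The plan is to recognize Lemma \ref{L-1} as essentially the $p=\infty$ case of the equivalence $(\mathscr{A}_1)\Leftrightarrow(\mathscr{A}_2)$ in Theorem \ref{thm-1.0}, and to read off the constants in both directions carefully enough to deduce the asserted two-sided norm equivalence. The remark immediately following Theorem \ref{thm-1.0} already records that, when one works with $\Lambda_{\omega,p}(\mathbb{D})$ rather than $\Lambda_{\omega,p}(\overline{\mathbb{D}})$, the continuity hypothesis on $\mathbb{T}$ plays no role in the equivalence $(\mathscr{A}_{1})\Leftrightarrow(\mathscr{A}_{2})$, so no boundary regularity of $f$ is required.

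For the direction $f\in\Lambda_{\omega}(\mathbb{D})\Rightarrow \mathscr{M}_{f}(z)\leq M\,\omega(d_{\mathbb{D}}(z))/d_{\mathbb{D}}(z)$, I would fix $z\in\mathbb{D}$, set $r=\tfrac{2}{3}d_{\mathbb{D}}(z)$, and apply the harmonic Poisson representation of $f$ on the disk $\mathbb{D}(z,r)$, exactly as in Step~1 of the proof of Theorem \ref{thm-1.0}. Differentiating the Poisson kernel $P_{r}(\xi,e^{i\eta})$ at $\xi=z$ and using the uniform bound $|\partial_{\xi}P_{r}|+|\partial_{\overline{\xi}}P_{r}|\leq 80/r$ established there, together with the Lipschitz estimate $|f(z+re^{i\eta})-f(z)|\leq \|f\|_{\Lambda_{\omega}(\mathbb{D}),s}\,\omega(r)$, yields $\mathscr{M}_{f}(z)\leq M\|f\|_{\Lambda_{\omega}(\mathbb{D}),s}\,\omega(r)/r\leq M'\|f\|_{\Lambda_{\omega}(\mathbb{D}),s}\,\omega(d_{\mathbb{D}}(z))/d_{\mathbb{D}}(z)$, where the last inequality uses that $\omega(t)/t$ is non-increasing. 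Note that all constants here depend only on $\omega$ and not on $f$.

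For the converse direction, I would invoke the fact, recalled in the introduction, that for a fast majorant $\omega$ the unit disk $\mathbb{D}$ is a $\Lambda_{\omega}$-extension domain. Given any $z_{1},z_{2}\in\mathbb{D}$, one picks a rectifiable curve $\gamma\subset\mathbb{D}$ joining them and satisfying $\int_{\gamma}\omega(d_{\mathbb{D}}(\zeta))/d_{\mathbb{D}}(\zeta)\,ds(\zeta)\leq M\omega(|z_{1}-z_{2}|)$. Then the fundamental estimate $|f(z_{1})-f(z_{2})|\leq\int_{\gamma}\mathscr{M}_{f}(\zeta)\,ds(\zeta)$, combined with the assumed pointwise bound on $\mathscr{M}_{f}$, produces the Lipschitz estimate with constant $M'\sup_{z\in\mathbb{D}}\{\mathscr{M}_{f}(z)d_{\mathbb{D}}(z)/\omega(d_{\mathbb{D}}(z))\}$, exactly as in Step~2 of the proof of Theorem \ref{thm-1.0}.

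The main point requiring care is the bookkeeping of constants: one has to verify that the multiplicative constants produced by the Poisson-kernel argument and by the curve-integration argument are both absolute (depending only on $\omega$), in order to chain them into the ``Moreover'' two-sided norm equivalence with a single constant $M$ independent of $f$. Beyond that, the argument is a routine specialization of Steps~1 and~2 of the proof of Theorem \ref{thm-1.0} to the case $p=\infty$, with the continuity-on-$\mathbb{T}$ hypothesis dropped in view of the remark cited above.
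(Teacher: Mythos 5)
Your proposal is correct and follows the paper's own route: the paper obtains Lemma \ref{L-1} precisely as the $p=\infty$ case of the equivalence $(\mathscr{A}_{1})\Leftrightarrow(\mathscr{A}_{2})$ in Theorem \ref{thm-1.0} (Steps 1 and 2 of its proof), with the boundary-continuity hypothesis discarded exactly as licensed by the remark following that theorem, and the two-sided norm estimate read off from the fact that the constants in both steps depend only on $\omega$. One cosmetic point: the passage from $\omega\bigl(\tfrac{2}{3}d_{\mathbb{D}}(z)\bigr)\big/\bigl(\tfrac{2}{3}d_{\mathbb{D}}(z)\bigr)$ to $\tfrac{3}{2}\,\omega\bigl(d_{\mathbb{D}}(z)\bigr)\big/d_{\mathbb{D}}(z)$ uses the monotonicity of $\omega$ itself rather than the non-increasingness of $\omega(t)/t$ (which gives the reverse inequality), but this does not affect the argument.
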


\subsection{The proof of Theorem \ref{th-3}}
We divide the proof of this theorem into seven steps.

\noindent $\mathbf{Step~1.}$ ``$(\mathscr{F}_{2})\Rightarrow(\mathscr{F}_{1})$".

For $f\in\Lambda_{\omega_{1}}(\mathbb{D})\cap\mathscr{H}(\mathbb{D})$, it follows from
 Lemma \ref{L-1} that
there is a positive constant $M$ such that, for $w\in \mathbb{D}$,
$$\mathscr{M}_{f}(w)\leq\,M\frac{\omega_{1}(d_{\mathbb{D}}(w))}{d_{\mathbb{D}}(w)},
$$ which, together with the assumption   $(\mathscr{F}_{2})$, implies that

\beq\label{eq-0.6k}\nonumber
\int_{0}^{2\pi}\left(\mathscr{M}_{C_{\phi}(f)}(ze^{i\theta})\right)^{p}d\theta&=&
\int_{0}^{2\pi}\left(\mathscr{M}_{f}(\phi(ze^{i\theta}))|\phi'(ze^{i\theta})|\right)^{p}d\theta\\ \nonumber
&\leq&\,M^{p}\int_{0}^{2\pi}\left(\frac{\omega_{1}(d_{\mathbb{D}}(\phi(ze^{i\theta})))}{d_{\mathbb{D}}(\phi(ze^{i\theta}))}
|\phi'(ze^{i\theta})|\right)^{p}d\theta\\
&\leq&M^{2p}\left(\frac{\omega_{2}\big(d_{\mathbb{D}}(z)\big)}{d_{\mathbb{D}}(z)}\right)^{p}.
\eeq
By (\ref{eq-0.6k}) and Theorem \ref{thm-1.0}, we have $C_{\phi}(f)\in\Lambda_{\omega_{2},p}(\mathbb{D})\cap\mathscr{H}(\mathbb{D})$.\\

\noindent $\mathbf{Step~2.}$ ``$(\mathscr{F}_{1})\Rightarrow(\mathscr{F}_{2})$".

Suppose that
$C_{\phi}(f)\in\Lambda_{\omega_{2},p}(\mathbb{D})\cap\mathscr{H}(\mathbb{D})$
for every
$f\in\Lambda_{\omega_{1}}(\mathbb{D})\cap\mathscr{H}(\mathbb{D})$.
We split the proof of this step into two cases.

\noindent $\mathbf{Case~1.}$ Suppose that $$\lim_{t\rightarrow0^{+}}\frac{\omega_{1}(t)}{t}=\infty.$$

For this case, let $$\psi(t)=\frac{\omega_{1}(1-t)}{1-t}$$ for $t\in[0,1)$.
Since for $s\in (0,1]$, $$
\psi\left(1-\frac{s}{2}\right)=\frac{\omega_1(\frac{s}{2})}{\frac{s}{2}}\leq 2\frac{\omega_1(s)}{s}=2\psi(1-s),
$$
we see that $\psi$ is a doubling.
Then, by Lemma \ref{lem-0.2},
there are  holomorphic functions
$f_{j}$ $(j\in\{1,2\})$
with
\[
\sup_{\xi\in \mathbb{D}}\frac{|f_j'(\xi)|}{\psi(|\xi|)}<\infty
\]
such that
for $\xi\in\mathbb{D}$,
\[
\sum_{j=1}^{2}|f_{j}'(\xi)|\geq\psi(|\xi|),
\]
which yields that
\be\label{eq-0.8k}
|f_{1}'(\phi(z))|+|f_{2}'(\phi(z))|\geq\,\frac{\omega_{1}(d_{\mathbb{D}}(\phi(z)))}{d_{\mathbb{D}}(\phi(z))},
\quad
z\in \mathbb{D}.
\ee
It follows from Lemma \ref{L-1} that $f_{1},~f_{2}\in\Lambda_{\omega_{1}}(\mathbb{D})\cap\mathscr{H}(\mathbb{D})$.
Consequently, $f=f_{1}+\overline{f_{2}}\in\Lambda_{\omega_{1}}(\mathbb{D})\cap\mathscr{H}(\mathbb{D})$.
Since $C_{\phi}(f)\in\Lambda_{\omega_{2},p}(\mathbb{D})\cap\mathscr{H}(\mathbb{D})$, by Theorem \ref{thm-1.0},
we conclude that there is a positive constant $M$ such that, for $z\in\mathbb{D}$,
\be\label{eq-0.9k}\mathscr{J}_{9}
\leq
M\frac{\omega_{2}\big(d_{\mathbb{D}}(z)\big)}{d_{\mathbb{D}}(z)},
\ee where $$\mathscr{J}_{9}=\left(\int_{0}^{2\pi}\left(|f'_{1}(\phi(ze^{i\theta}))|+|f'_{2}(\phi(ze^{i\theta}))|\right)^{p}|\phi'(ze^{i\theta})|^{p}d\theta\right)^{\frac{1}{p}}.$$
Then $(\mathscr{F}_{2})$ follows from (\ref{eq-0.8k}) and (\ref{eq-0.9k}).

\noindent $\mathbf{Case~2.}$ Suppose that $$\lim_{t\rightarrow0^{+}}\frac{\omega_{1}(t)}{t}<\infty.$$

For this case, let $f_{0}(\xi)=\xi$ for $\xi\in\mathbb{D}$. Since $\omega_1(t)/t$ is non-increasing for $t>0$, we see that
there is a positive constant $M$ such that, for $z\in\mathbb{D}$,
\be\label{eq-10k}
|f_{0}'(\phi(z))|\geq\, M\frac{\omega_{1}(d_{\mathbb{D}}(\phi(z)))}{d_{\mathbb{D}}(\phi(z))}.
\ee
On the other hand, for $t\in(0,2]$, $$\frac{\omega_{1}(t)}{t}\geq \frac{\omega_{1}(2)}{2},$$
which gives that $f_{0}\in\Lambda_{\omega_{1}}(\mathbb{D})\cap\mathscr{H}(\mathbb{D})$.
By Theorem \ref{thm-1.0},
 there is a positive constant $M$ such that, for $z\in\mathbb{D}$,
\beqq
\left(\int_{0}^{2\pi}\left(|f'_{0}(\phi(ze^{i\theta}))||\phi'(ze^{i\theta})|\right)^{p}d\theta\right)^{\frac{1}{p}}\leq
M\frac{\omega_{2}\big(d_{\mathbb{D}}(z)\big)}{d_{\mathbb{D}}(z)},
\eeqq
which, together with (\ref{eq-10k}), implies that $(\mathscr{F}_{2})$.\\

\noindent $\mathbf{Step~3.}$ ``$(\mathscr{F}_{1})\Rightarrow(\mathscr{F}_{3})$".

By the Step 2, we see that there is a positive constant $M$ such that, for $z\in\mathbb{D}$,
\be\label{eq-1.0k}\left(\int_{0}^{2\pi}\left(|\phi'(ze^{i\theta})|
\frac{\omega_{1}\big(d_{\mathbb{D}}(\phi(ze^{i\theta}))\big)}{d_{\mathbb{D}}(\phi(ze^{i\theta}))}\right)^{p}d\theta\right)^{\frac{1}{p}}
\leq M\frac{\omega_{2}\big(d_{\mathbb{D}}(z)\big)}{d_{\mathbb{D}}(z)}.\ee
It follows from $\omega_{1}\in\mathscr{S}$ and (\ref{eq-t4}) that, for $z\in\mathbb{D}$,
\be\label{L-9}
\omega_{1}'(d_{\mathbb{D}}(\phi(z)))\leq\frac{\omega_{1}(d_{\mathbb{D}}(\phi(z)))-\omega_{1}(0)}{d_{\mathbb{D}}(\phi(z))-0}.\ee
By (\ref{eq-t6}) and (\ref{L-9}), we have
$$\mathscr{M}_{\omega_{1}(d_{\mathbb{D}}(\phi))}(z)=|\phi'(z)|\omega_{1}'
(d_{\mathbb{D}}(\phi(z)))\leq|\phi'(z)|\frac{\omega_{1}(d_{\mathbb{D}}(\phi(z)))}{d_{\mathbb{D}}(\phi(z))},$$
which, together with (\ref{eq-1.0k}), yields that
there is a positive constant $M$ such that, for $z\in\mathbb{D}$,
\beqq
\mathscr{J}_{10}
&\leq&
\left(\int_{0}^{2\pi}\left(|\phi'(ze^{i\theta})|
\frac{\omega_{1}\big(d_{\mathbb{D}}(\phi(ze^{i\theta}))\big)}{d_{\mathbb{D}}(\phi(ze^{i\theta}))}\right)^{p}d\theta\right)^{\frac{1}{p}}\\
&\leq& M\frac{\omega_{2}\big(d_{\mathbb{D}}(z)\big)}{d_{\mathbb{D}}(z)},
\eeqq where $$\mathscr{J}_{10}=\left(\int_{0}^{2\pi}\left(\mathscr{M}_{\omega_{1}(d_{\mathbb{D}}(\phi))}(ze^{i\theta})\right)^{p}d\theta\right)^{\frac{1}{p}}.$$
Then, $\omega_{1}(d_{\mathbb{D}}(\phi))\in\Lambda_{\omega_{2},p}(\mathbb{D})$
by Lemma \ref{lem-0.4}.
Consequently, $\omega_{1}(d_{\mathbb{D}}(\phi))\in\Lambda_{\omega_{2},p}(\overline{\mathbb{D}})$,
since $\phi$ is continuous in $\overline{\mathbb{D}}$.
\\

\noindent $\mathbf{Step~4.}$ ``$(\mathscr{F}_{3})\Rightarrow(\mathscr{F}_{1})$".

Since $\omega_{1}(d_{\mathbb{D}}(\phi))\in\Lambda_{\omega_{2},p}(\mathbb{D})$,
by Lemma \ref{lem-0.4}, we see that there is a positive constant $M$ such that
\be\label{eq-1.1k}
\mathscr{J}_{11}
\leq\,M\frac{\omega_{2}(d_{\mathbb{D}}(z))}{d_{\mathbb{D}}(z)},~z\in\mathbb{D},
\ee where $$\mathscr{J}_{11}=\left(\int_{0}^{2\pi}\left(|\phi'(ze^{i\theta})|\omega_{1}'(d_{\mathbb{D}}(\phi(ze^{i\theta})))\right)^{p}d\theta\right)^{\frac{1}{p}}.$$

It follows from Lemma \ref{L-1} that there is a positive constant $M$ such that, for $f\in\Lambda_{\omega_{1}}(\mathbb{D})\cap\mathscr{H}(\mathbb{D})$,
\beqq
\mathscr{M}_{C_{\phi}(f)}(ze^{i\theta})=\mathscr{M}_{f}(\phi(ze^{i\theta}))|\phi'(ze^{i\theta})|
\leq\,M\frac{\omega_{1}(d_{\mathbb{D}}(\phi(ze^{i\theta})))}{d_{\mathbb{D}}(\phi(ze^{i\theta}))}|\phi'(ze^{i\theta})|,
\eeqq
which, together with (\ref{eq-1.1k}), gives that
\beq\label{eq-1.2k}\mathscr{J}_{12}
&=&
\int_{0}^{2\pi}\left(\mathscr{M}_{f}(\phi(ze^{i\theta}))|\phi'(ze^{i\theta})|\right)^{p}d\theta\\ \nonumber
&\leq&\,M^{p}\int_{0}^{2\pi}\left(\frac{\omega_{1}(d_{\mathbb{D}}(\phi(ze^{i\theta})))}{d_{\mathbb{D}}(\phi(ze^{i\theta}))}
|\phi'(ze^{i\theta})|\right)^{p}d\theta\\ \nonumber
&\leq&M^{p}M_{1}^{p}\int_{0}^{2\pi}\left(|\phi'(ze^{i\theta})|\omega_{1}'(d_{\mathbb{D}}(\phi(ze^{i\theta})))\right)^{p}d\theta\\  \nonumber
&\leq&M^{2p}M_{1}^{p}\left(\frac{\omega_{2}(d_{\mathbb{D}}(z))}{d_{\mathbb{D}}(z)}\right)^{p},
\eeq
where $$\mathscr{J}_{12}=\int_{0}^{2\pi}\left(\mathscr{M}_{C_{\phi}(f)}(ze^{i\theta})\right)^{p}d\theta$$ and
 $$M_{1}=\sup_{z\in \mathbb{D}}\left\{\frac{\omega_{1}(d_{\mathbb{D}}(\phi(z)))}
 {d_{\mathbb{D}}(\phi(z))\omega_{1}'(d_{\mathbb{D}}(\phi(z)))}\right\}<\infty.$$
Hence $(\mathscr{F}_{1})$ follows from (\ref{eq-1.2k}) and Theorem \ref{thm-1.0}.\\

\noindent $\mathbf{Step~5.}$ ``$(\mathscr{F}_{3})\Rightarrow(\mathscr{F}_{4})$" is obvious.\\






\noindent $\mathbf{Step~6.}$ ``$(\mathscr{F}_{4})\Rightarrow(\mathscr{F}_{5})$".\\

For $r\in(0,1)$ and $\theta\in[0,2\pi]$, let
$$\mathscr{K}_{1}(re^{i\theta})=\left(\omega_{1}(d_{\mathbb{D}}(\phi(re^{i\theta})))-P[\omega_{1}(d_{\mathbb{D}}(\phi))](re^{i\theta})\right),$$
$$\mathscr{K}_{2}(re^{i\theta})=\left\{\omega_{1}(d_{\mathbb{D}}(\phi(re^{i\theta})))-\omega_{1}(d_{\mathbb{D}}(\phi(e^{i\theta})))\right\}_{+}$$
and
$$\mathscr{K}_{3}(re^{i\theta})=\left|\omega_{1}(d_{\mathbb{D}}(\phi(e^{i\theta})))-P[\omega_{1}(d_{\mathbb{D}}(\phi))](re^{i\theta})\right|.$$
Then, by $(\mathscr{F}_{4})$, we see that there is a positive constant $M$ such that
\be\label{eq-1.5k}
\left(\int_{0}^{2\pi}
\left(\mathscr{K}_{2}(re^{i\theta})\right)^{p}d\theta\right)^{\frac{1}{p}}\leq\, M\omega_{2}(d_{\mathbb{D}}(r)).
\ee
Since  $\omega_{1}(d_{\mathbb{D}}(\phi))\in\Lambda_{\omega_{2},p}(\mathbb{T})$
and $\omega_2$ is a regular majorant, by Theorem \ref{thm-5.0}, there is a positive constant $M$ such that
\be\label{eq-1.6k}
\left(\int_{0}^{2\pi}
\left(\mathscr{K}_{3}(re^{i\theta})\right)^{p}d\theta\right)^{\frac{1}{p}}\leq\, M\omega_{2}(d_{\mathbb{D}}(r)).
\ee

On the other hand, 
we have
\beqq
\mathscr{K}_{1}(re^{i\theta})&=&\left\{\omega_{1}(d_{\mathbb{D}}(\phi(re^{i\theta})))-P[\omega_{1}(d_{\mathbb{D}}(\phi))](re^{i\theta})\right\}_{+}\\
&\leq&\mathscr{K}_{2}(re^{i\theta})+\left\{\omega_{1}(d_{\mathbb{D}}(\phi(e^{i\theta})))-P[\omega_{1}(d_{\mathbb{D}}(\phi))](re^{i\theta})\right\}_{+}\\
&\leq&
\mathscr{K}_{2}(re^{i\theta})+\mathscr{K}_{3}(re^{i\theta}),
\eeqq
which, together with (\ref{eq-1.5k}), (\ref{eq-1.6k}) and the Minkowski inequality, implies that
there is a positive constant $M$ such that
\beqq
\left(\int_{0}^{2\pi}\left(\mathscr{K}_{1}(re^{i\theta})\right)^{p}d\theta\right)^{\frac{1}{p}}&\leq&
\left(\int_{0}^{2\pi}
\left(\mathscr{K}_{2}(re^{i\theta})\right)^{p}d\theta\right)^{\frac{1}{p}}\\
&&+\left(\int_{0}^{2\pi}
\left(\mathscr{K}_{3}(re^{i\theta})\right)^{p}d\theta\right)^{\frac{1}{p}}\\
&\leq&M\omega_{2}(d_{\mathbb{D}}(r)).
\eeqq

\noindent $\mathbf{Step~7.}$ ``$(\mathscr{F}_{5})\Rightarrow(\mathscr{F}_{1})$".\\

For $z,~w\in\mathbb{D}$ and $\theta\in[0,2\pi]$, let
$$\mathscr{K}_{4}(ze^{i\theta},we^{i\theta})=\left\{\omega_{1}(d_{\mathbb{D}}(\phi(ze^{i\theta})))-P[\omega_{1}(d_{\mathbb{D}}(\phi))](we^{i\theta})\right\}_{+},$$
$$\mathscr{K}_{5}(ze^{i\theta},we^{i\theta})=\left\{\omega_{1}(d_{\mathbb{D}}(\phi(ze^{i\theta})))-\omega_{1}(d_{\mathbb{D}}(\phi(we^{i\theta})))\right\}_{+}$$
and
$$\mathscr{K}_{6}(ze^{i\theta},we^{i\theta})=\left\{P[\omega_{1}(d_{\mathbb{D}}(\phi))](ze^{i\theta})-P[\omega_{1}(d_{\mathbb{D}}(\phi))](we^{i\theta})\right\}_{+}.$$
Since $\omega_{1}(d_{\mathbb{D}}(\phi))$ is superharmonic in $\mathbb{D}$ (see Lemma \ref{le-9}),
we see that

\beq\label{eq-1.6.0k}\nonumber
\mathscr{K}_{5}(ze^{i\theta},we^{i\theta})&\leq&\mathscr{K}_{4}(ze^{i\theta},we^{i\theta})\\
&\leq&\mathscr{K}_{4}(ze^{i\theta},ze^{i\theta})+\mathscr{K}_{6}(ze^{i\theta},we^{i\theta}).
\eeq
From $(\mathscr{F}_{5})$,  we see that there is a positive constant $M$
such that
\be\label{eq-1.7k}\left(\int_{0}^{2\pi}\left(\mathscr{K}_{4}(ze^{i\theta},ze^{i\theta})\right)^{p}d\theta\right)^{\frac{1}{p}}
\leq\,M\omega_{2}(d_{\mathbb{D}}(z)).\ee
Since $\omega_{1}(d_{\mathbb{D}}(\phi))\in\Lambda_{\omega_{2},p}(\mathbb{T})$
and $\omega_2$ is a regular majorant, by Theorem \ref{thm-5.0},
there is a positive constant $M$
such that
\be\label{eq-1.8k}\left(\int_{0}^{2\pi}\left(\mathscr{K}_{6}(ze^{i\theta},we^{i\theta})\right)^{p}d\theta\right)^{\frac{1}{p}}
\leq\,M\omega_{2}(|z-w|).\ee
For $z\in\mathbb{D}$ and $|z-w|\leq d_{\mathbb{D}}(z)$, it follows from (\ref{eq-1.6.0k}), (\ref{eq-1.7k}), (\ref{eq-1.8k}) and the Minkowski inequality that
\beq\label{eq-1.9k}\nonumber
\left(\int_{0}^{2\pi}\left(\mathscr{K}_{5}(ze^{i\theta},we^{i\theta})\right)^{p}d\theta\right)^{\frac{1}{p}}&\leq&
\left(\int_{0}^{2\pi}\left(\mathscr{K}_{4}(ze^{i\theta},ze^{i\theta})\right)^{p}d\theta\right)^{\frac{1}{p}}\\ \nonumber
&&+\left(\int_{0}^{2\pi}\left(\mathscr{K}_{6}(ze^{i\theta},we^{i\theta})\right)^{p}d\theta\right)^{\frac{1}{p}}\\
&\leq&M\omega_{2}(d_{\mathbb{D}}(z)).
\eeq
Combining (\ref{eq-1.9k}), Lemmas \ref{lem-0.4} and \ref{cor-1.0} gives $\omega_{1}(d_{\mathbb{D}}(\phi))\in\Lambda_{\omega_{2},p}(\mathbb{D}),$
which, together with Step 4, implies that $(\mathscr{F}_{1})$ holds. The proof of this theorem is complete.
\qed

\section*{Statements and Declarations}

\subsection*{Competing interests}
There are no competing interests.

\subsection*{Data availability}
Data sharing not applicable to this article as no datasets were generated or analysed during the current study.



\bigskip
\section{Acknowledgments}
The research of the first author was partly supported by the National Science
Foundation of China (grant no. 12071116), the Hunan Provincial
Natural Science Foundation of China (No. 2022JJ10001), the Key
Projects of Hunan Provincial Department of Education (grant no.
21A0429),
 the Double First-Class University Project of Hunan Province
(Xiangjiaotong [2018]469),  the Science and Technology Plan Project
of Hunan Province (2016TP1020),  and the Discipline Special Research
Projects of Hengyang Normal University (XKZX21002); The research of
the second author was partly supported by JSPS KAKENHI Grant Number
JP22K03363.

\end{document}